\numberwithin{equation}{section}
\newtheorem{theoremcounter}{theoremcounter}[section]
\newtheorem{thmstarcounter}{thmstarcounter}
\newtheorem{conjecture}[theoremcounter]{Conjecture}
\newtheorem*{conjstar}{Conjecture}
\newtheorem{corollary}[theoremcounter]{Corollary}
\newtheorem{lemma}[theoremcounter]{Lemma}
\newtheorem{proposition}[theoremcounter]{Proposition}
\newtheorem{theorem}[theoremcounter]{Theorem}
\newtheorem{thmstar}[thmstarcounter]{Theorem}
\theoremstyle{definition}
\newtheorem{notation}[theoremcounter]{Notation}
\newtheorem{remark}[theoremcounter]{Remark}
\newcommand{\cA}{\ensuremath{\mathcal{A}}}
\newcommand{\cF}{\ensuremath{\mathcal{F}}}
\newcommand{\cH}{\ensuremath{\mathcal{H}}}
\newcommand{\cO}{\ensuremath{\mathcal{O}}}
\newcommand{\cR}{\ensuremath{\mathcal{R}}}
\newcommand{\cZ}{\ensuremath{\mathcal{Z}}}
\newcommand{\rE}{\ensuremath{\mathrm{E}}}
\newcommand{\rL}{\ensuremath{\mathrm{L}}}
\newcommand{\rS}{\ensuremath{\mathrm{S}}}
\newcommand{\rmd}{\ensuremath{\mathrm{d}}}
\newcommand{\rmf}{\ensuremath{\mathrm{f}}}
\newcommand{\ol}{\overline}
\newcommand{\amid}{\ensuremath{\, | \,}}
\newcommand{\eqstop}{\ensuremath{\, \text{.}}}
\newcommand{\eqcomma}{\ensuremath{\, \text{,}}}
\newcommand{\NN}{\ensuremath{\mathbb{N}}}
\newcommand{\ZZ}{\ensuremath{\mathbb{Z}}}
\newcommand{\QQ}{\ensuremath{\mathbb{Q}}}
\newcommand{\RR}{\ensuremath{\mathbb{R}}}
\newcommand{\CC}{\ensuremath{\mathbb{C}}}
\newcommand{\id}{\ensuremath{\mathrm{id}}}
\newcommand{\ra}{\ensuremath{\rightarrow}}
\newcommand{\lra}{\ensuremath{\longrightarrow}}
\newcommand{\Cmat}[1]{\ensuremath{\mathrm{M}_{#1}(\CC)}}
\newcommand{\Tr}{\ensuremath{\mathop{\mathrm{Tr}}}}
\newcommand{\ot}{\ensuremath{\otimes}}
\newcommand{\Wstar}{\ensuremath{\text{W}^*}}
\newcommand{\bo}{\ensuremath{\mathscr{B}}}
\newcommand{\twonorm}{\ensuremath{\| \phantom{x} \|_2}}
\newcommand{\supp}{\ensuremath{\mathop{\mathrm{supp}}}}
\newcommand{\vnt}{\ensuremath{\overline{\otimes}}}
\newcommand{\bim}[3]{\mathord{\raisebox{-0.4ex}[0ex][0ex]{\scriptsize $#1$}{#2}\hspace{-0.2ex}\raisebox{-0.4ex}[0ex][0ex]{\scriptsize $#3$}}}
\newcommand{\lmod}[2]{\mathord{\raisebox{-0.4ex}[0ex][0ex]{\scriptsize $#1$}{#2}}}
\newcommand{\rmod}[2]{\mathord{{#1}\raisebox{-0.4ex}[0ex][0ex]{\scriptsize $#2$}}}
\newcommand{\Linfty}{\ensuremath{\mathrm{L}^\infty}}
\newcommand{\Ltwo}{\ensuremath{\mathrm{L}^2}}
\newcommand{\ltwo}{\ensuremath{\ell^2}}
\newcommand{\grpaction}[1]{\ensuremath{\stackrel{#1}{\curvearrowright}}}
\newcommand{\freegrp}[1]{\ensuremath{\mathbb{F}}_{#1}}
\DeclareFontFamily{OT1}{pzc}{}
\DeclareFontShape{OT1}{pzc}{m}{it}{<-> s * [1.10] pzcmi7t}{}
\DeclareMathAlphabet{\mathpzc}{OT1}{pzc}{m}{it}
\newcommand{\Norm}{\ensuremath{\mathcal{N}}}
\newcommand{\QN}{\ensuremath{\mathrm{QN}}}
\renewcommand{\vnt}{\ot}
\begin{document}
\begin{center}
\textbf{\LARGE On the classification of free Bogoljubov crossed product von Neumann algebras by the integers} \\

\bigskip

{by Sven Raum \footnote{Supported by KU Leuven BOF research grant OT/08/032}}
\end{center}

\begin{center}
  \textbf{Abstract.} We consider crossed product von Neumann algebras arising from free Bogoljubov actions of $\ZZ$.  We describe several presentations of them as amalgamated free products and cocycle crossed products and give a criterion for factoriality.  A number of isomorphism results for free Bogoljubov crossed products are proved, focusing on those arising from almost periodic representations.  We complement our isomorphism results by rigidity results yielding non-isomorphic free Bogoljubov crossed products and by a partial characterisation of strong solidity of a free Bogoljubov crossed products in terms of properties of the orthogonal representation from which it is constructed
\end{center}

\section{Introduction}
\label{sec:introduction}

With an orthogonal representation $(H, \pi)$ of a discrete group $G$, Voiculescu's free Gaussian functor associates an action of $G$ on the free group factor $\Gamma(H)'' \cong \rL \freegrp{\dim H}$ (see Section \ref{sec:preliminaries:representations} and \cite[Section 2.6]{voiculescudykemanica92}).  An action arising this way is called a \emph{free Bogoljubov action} of $G$.  The associated free Bogoljubov crossed product von Neumann algebras $\Gamma(H)'' \rtimes G$, also denoted by $\Gamma(H, G, \pi)''$, were studied by several authors \cite{shlyakhtenko99, houdayershlyakhtenko11, houdayer12, houdayer12-structure}.  Note that in \cite[Section 7]{shlyakhtenko99} free Bogoljubov crossed products with $\ZZ$ appear under the name of \emph{free Krieger algebras} (see also \cite[Section 3]{shlyakhtenko98-applications} and \cite[Section 6]{houdayershlyakhtenko11}).  The classification of free Bogoljubov crossed products is especially interesting because of their close relation to \emph{free Araki-Woods factors} \cite{shlyakhtenko97, shlyakhtenko99}.  In the context of the complete classification of free Araki-Woods factors associated with almost periodic orthogonal representations of $\RR$ \cite[Theorem 6.6]{shlyakhtenko97}, already the classification of the corresponding class of free Bogoljubov crossed products becomes an attractive problem.

Popa initiated his \emph{deformation/rigidity theory} in 2001 \cite{popa06-non-commutative-bernoulli-shifts, popa06, popa06_2, popa06_3, popa08-spectral-gap}.  During the past decade this theory enabled him to prove a large number of non-isomorphism results for von Neumann algebras and to calculate many of their invariants.  In particular, he obtained the first rigidity results for \emph{group measure space II$_1$ factors} in \cite{popa06_2, popa06_3}. Moreover, he obtained the first calculations of \emph{fundamental groups} not equal to $\RR_{>0}$ in \cite{popa06} and of outer automorphisms groups in \cite{ioanapetersonpopa08}.  Further developments in the deformation/rigidity theory led Ozawa and Popa to the discovery of \emph{II$_1$ factors with a unique Cartan subalgebra} in \cite{ozawapopa10-cartan1, ozawapopa10-cartan2}. Also \emph{\Wstar-superrigidity} theorems for group von Neumann algebras \cite{ioanapopavaes10,berbecvaes12} and group measure space II$_1$ factors \cite{popavaes10-superrigidity, popavaes11_2, popavaes12, ioana12} were proved by means of deformation/rigidity techniques.  In the context of free Bogoljubov actions Popa's techniques were applied too.  In \cite[Section 6]{popa06-non-commutative-bernoulli-shifts}, Popa introduced the \emph{free malleable deformation} of free Bogoljubov crossed products.  This lead in \cite{houdayer10} and, using the work of Ozawa-Popa, in \cite{houdayershlyakhtenko11, houdayerricard11-araki-woods, houdayer12-structure} to several structural results and rigidity theorems for free Araki-Woods factors and free Bogoljubov crossed products.  We use the main result of \cite{houdayershlyakhtenko11} in order to obtain certain non-isomorphism results for free Bogoljubov crossed products.

In the cause of the deformation/rigidity theory, absence of Cartan algebras and primeness were studied too.  The latter means that a given II$_1$ factor has no decomposition as a tensor product of two II$_1$ factors.  Ozawa introduced in \cite{ozawa04-solid} the notion of \emph{solid} II$_1$ factors, that is II$_1$ factors $M$ such that for all diffuse von Neumann subalgebras $A \subset M$ the relative commutant $A' \cap M$ is amenable.  In \cite{popa07-on-ozawa}, Popa used his deformation/rigidity techniques in order to prove solidity of the free group factors, leading to the discovery of \emph{strongly solid} II$_1$ factors in \cite{ozawapopa10-cartan1, ozawapopa10-cartan2}.  A II$_1$ factor $M$ is strongly solid if for all amenable, diffuse von Neumann subalgebras $A \subset M$, its normaliser $\Norm_M(A)''$ is amenable too.  We extend the results of \cite{houdayershlyakhtenko11} on strong solidity of certain free Bogoljubov crossed products and point out a class of non-solid free Bogoljubov crossed products.

Opposed to non-isomorphism results obtained in Popa's deformation/rigidity theory, there are two known sources of isomorphism results for von Neumann algebras. First, the \emph{classification of injective von Neumann algebras} by Connes \cite{connes76} shows that all group measure space II$_1$ factors $\Linfty(X) \rtimes G$ associated with free, ergodic, probability measure preserving actions $G \grpaction{} X$ are isomorphic to the hyperfinite II$_1$ factor $R$. By \cite{ornsteinweiss80, connesfeldmanweiss81}, if $H \grpaction{} Y$ is another free, ergodic, probability measure preserving action of an amenable group, then these actions are \emph{orbit equivalent}, meaning that there is a probability measure preserving isomorphism $\Delta: X \ra Y$ such that $\Delta(G \cdot x) = H \cdot \Delta(x)$ for almost every $x \in X$.  By a result of Singer \cite{singer55}, this means that there is an isomorphism $\Linfty(X) \rtimes G \cong \Linfty(Y) \rtimes G$ sending $\Linfty(X)$ to $\Linfty(Y)$.

The second source of unexpected isomorphism results for von Neumann algebras is \emph{free probability theory} as it was initiated by Voiculescu \cite{voiculescu85}.  We employ two branches of free probability theory.  On the one hand, we use the work of Dykema on \emph{interpolated free group factors} and \emph{amalgamated free products}.  Interpolated free group factors were independently introduced by Dykema \cite{dykema94-interpolated} and R\u{a}dulescu \cite{radulescu94-interpolated}.  If $M$ is a II$_1$ factor, the \emph{amplification of $M$ by $t$} is $M^t = p(\Cmat{n} \ot M)p$, where $p \in \Cmat{n} \ot M$ is a projection of non-normalised trace $\Tr \ot \tau(p) = t$ .  It does not depend on the specific choice of $n$ and $p$.  The interpolated free group factors can be defined by
\[
  \rL \freegrp{r}
  =
  (\rL \freegrp{n})^t
  \eqcomma
  \text{ where } r  = 1 + \frac{n-1}{t^2} \eqcomma \text{ for some } t > 1 \text{ and } n \in \NN_{\geq 2}
  \eqstop
\]
Dykema's first result on free products of von Neumann algebras in \cite{dykema93-matrix-model} says that $\rL(\freegrp{n}) * R \cong \rL(\freegrp{n + 1})$ for any natural number $n$.  He developed his techniques in \cite{dykema94-interpolated, dykema93-hyperfinite, dykema95-multi-matrix, dykema11-finite-vnalg} arriving in \cite{dykemaredelmeier11} at a description of arbitrary amalgamated free products $A *_D B$ with respect to trace-preserving conditional expectations, where $A$ and $B$ are tracial direct sums of hyperfinite von Neumann algebras and interpolated free group factors and the amalgam $D$ is finite dimensional.

We combine the work of Dykema with a result on factoriality of certain amalgamated free products.  The first such results for proper amalgamated free products were obtained by Popa in \cite[Theorem 4.1]{popa93}, followed by several results of Ueda in the non-trace preserving setting \cite{ueda99, ueda03, ueda04, ueda12}.  We will use a result of Houdayer-Vaes \cite[Theorem 5.8]{houdayervaes12}, which allows for a particularly easy application in this paper.

The second aspect of free probability theory that we use is operator-valued free probability theory, as it was developed by Voiculescu \cite{voiculescu95} and Speicher \cite{speicher98}.  At the heart of this theory there lie operator-valued semicircular elements.  The von Neumann algebras generated by such elements have been described by Shlyakhtenko in \cite{shlyakhtenko99}.  We use this work in order to identify a certain free Bogoljubov crossed product as a free group factor.

Section \ref{sec:general} treats the structure of free Bogoljubov crossed products.  We obtain several different representations of free Bogoljubov crossed products associated with almost periodic orthogonal representations of $\ZZ$ in Theorem \ref{thm:afp-decomposition-ap} and Proposition \ref{prop:cocycle-crossed-product}.  We calculate the normaliser and the quasi-normaliser of the canonical abelian von Neumann subalgebra of a free Bogoljubov crossed product in Corollary \ref{cor:normaliser-quasi-normaliser} and address the question of factoriality of free Bogoljubov crossed products in Corollary \ref{cor:factorial}.  Most of the results in this section are probably folklore.

In Section \ref{sec:flexibility-ap}, we obtain isomorphism results for free Bogoljubov crossed products associated with almost periodic orthogonal representations. In particular, we classify free Bogoljubov crossed products associated with non-faithful orthogonal representations of $\ZZ$ in terms of the dimension of the representation and the index of its kernel.  They are tensor products of a diffuse abelian von Neumann algebra with an interpolated free group factor.

\begin{thmstar}[See Theorem \ref{thm:classification-periodic}]
\label{thm:classification-non-faithful-introduction}
Let $(\pi,H)$ be a non-faithful orthogonal representation of $\ZZ$ of dimension at least $2$.  Let $r = 1 + (\dim \pi - 1)/[\ZZ : \ker \pi]$.  Then
\[\Gamma(H, \ZZ, \pi)'' \cong \Linfty([0,1]) \vnt \rL \freegrp{r} \eqcomma\]
by an isomorphism carrying the subalgebra $\rL \ZZ$ of $\Gamma(H, \ZZ, \pi)''$ onto $\Linfty([0,1]) \ot \CC^{[\ZZ : \ker \pi]}$.
\end{thmstar}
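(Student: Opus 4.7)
The strategy rests on three ingredients: the non-faithfulness of $\pi$ produces a large central subalgebra of $\Gamma(H, \ZZ, \pi)''$; disintegrating over this subalgebra reduces the problem to a measurable family of cocycle crossed products by the finite group $\ZZ/n\ZZ$; and the amalgamated free product decomposition of Section \ref{sec:general} combined with the Dykema--Redelmeier theorem \cite{dykemaredelmeier11} identifies each fiber as an interpolated free group factor.

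Write $\ker \pi = n\ZZ$ with $n = [\ZZ : \ker \pi] < \infty$. Since $\pi|_{n\ZZ}$ is trivial, the element $u^n \in \rL\ZZ$, where $u$ is the canonical generator, commutes with all of $\Gamma(H)''$ and hence lies in the center of $\Gamma(H,\ZZ,\pi)''$. Set $Z = \vN(u^n) \cong \Linfty(\bT)$. Combining the central decomposition over $Z$ with the cocycle crossed product description of Proposition \ref{prop:cocycle-crossed-product} yields a direct integral
\[
\Gamma(H, \ZZ, \pi)'' \cong \int_\bT^\oplus M_\zeta \, d\zeta
\eqcomma
\]
where each fiber $M_\zeta = \Gamma(H)'' \rtimes^{\sigma_\zeta} \ZZ/n\ZZ$ is a twisted crossed product by a scalar $2$-cocycle $\sigma_\zeta$ on $\ZZ/n\ZZ$ depending measurably on $\zeta$, and under which $\rL\ZZ$ corresponds to $\Linfty(\bT) \otimes \CC^n$ with $\CC^n = \rL(\ZZ/n\ZZ)$.

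The next step is to identify the fibers. Apply Theorem \ref{thm:afp-decomposition-ap} fiber-wise to write each $M_\zeta$ as an amalgamated free product over the finite-dimensional algebra $\CC^n$, the free components being matrix amplifications of hyperfinite algebras and interpolated free group factors indexed by the isotypic components of $\pi$ viewed as a representation of the finite cyclic group $\ZZ/n\ZZ$. By Corollary \ref{cor:factorial}, $M_\zeta$ is a II$_1$ factor for almost every $\zeta$. Feeding this amalgamated free product into the Dykema--Redelmeier theorem presents $M_\zeta$ as an interpolated free group factor, and a careful free-dimension bookkeeping---each irreducible subrepresentation of $\pi$ contributing according to its real dimension and amplified by the factor $1/n$ arising from the $n$-dimensional amalgam---produces precisely $r = 1 + (\dim \pi - 1)/n$.

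Since $\rH^2(\ZZ/n\ZZ, \bT) = 0$, all cocycles $\sigma_\zeta$ are coboundaries, and the fibers $M_\zeta$ are mutually isomorphic; a measurable selection of isomorphisms trivialises the bundle and produces the global isomorphism $\Gamma(H, \ZZ, \pi)'' \cong \Linfty(\bT) \vnt \rL\freegrp{r} \cong \Linfty([0,1]) \vnt \rL\freegrp{r}$ sending $\rL\ZZ$ onto $\Linfty([0,1]) \otimes \CC^n$, as required. The most delicate point will be the free-dimension bookkeeping that produces exactly $r = 1 + (\dim \pi - 1)/n$, since this requires careful tracking of amplifications across the various $\ZZ/n\ZZ$-isotypic components of $\pi$; a subsidiary issue is arranging the fiber-wise identification with $\rL\freegrp{r}$ measurably enough to assemble into a genuine tensor product decomposition.
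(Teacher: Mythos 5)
Your strategy is genuinely different from the paper's, and it is worth noting where the two diverge. You disintegrate over the central subalgebra $\rL(n\ZZ)$ and analyse the fibres as twisted crossed products by $\ZZ/n\ZZ$; the paper never disintegrates. Instead it observes that for $\lambda$ a primitive $T$-th root of unity the crossed product $\rL\ZZ\rtimes_\lambda\ZZ$ is $\Linfty([0,1])\vnt\Cmat{T}\vnt\Linfty([0,1])$, so that the central copy of $\Linfty([0,1])$ already sits inside the amalgam $A_\pi$ and can be pulled out of the amalgamated free product as a common tensor factor via $(P\vnt C)*_{A\vnt C}(Q\vnt C)\cong(P*_A Q)\vnt C$. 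That one-line manoeuvre replaces your direct integral, the cocycle trivialisation via $\rH^2(\ZZ/n\ZZ,\bT)=0$, and the measurable reassembly of the bundle; your route is viable in principle, but it leaves you owing a measurable field of fibre isomorphisms that simultaneously matches the distinguished copies of $\CC^n$, which is a nontrivial appeal to reduction theory rather than a formality.

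The genuine gap is in the middle step, which is where the content of the theorem lives. You invoke "Theorem \ref{thm:afp-decomposition-ap} applied fibre-wise", but that theorem concerns representations of $\ZZ$, not of $\ZZ/n\ZZ$; its analogue for the finite quotient must be proved, and it is genuinely more complicated, because the irreducible components of the quotient representation are in general non-faithful on $\ZZ/n\ZZ$. A two-dimensional component whose eigenvalue has order $d\mid n$ contributes a free component containing $\Linfty(\rS^1)\rtimes\ZZ/n\ZZ$ for a non-free rotation action, which is itself $\CC^{n/d}\vnt\Linfty([0,1])\vnt\Cmat{d}$, and the free dimensions of all these pieces must be summed against $(k-1)$ copies of $1-\tfrac1n$ for the amalgam to verify that the total is $1+(\dim\pi-1)/n$ independently of how the orders $d$ are distributed. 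You explicitly defer exactly this computation ("the most delicate point will be the free-dimension bookkeeping"), but the value of $r$ is precisely what the theorem asserts, so as written the proposal establishes only that $M_\pi$ is of the form $\Linfty([0,1])\vnt\rL\freegrp{r'}$ for some unidentified $r'$. The paper sidesteps the combinatorics entirely by first applying Theorem \ref{thm:dependence-on-subgroup-only} to replace $\pi$ by $\pi_0\oplus(\dim\pi-2)\cdot\mathbb{1}$ with $\pi_0$ the single two-dimensional irreducible with eigenvalues $e^{\pm2\pi i/T}$, after which one application of Theorems \ref{thm:houdayervaes} and \ref{thm:dykemaredelmeier} finishes the count. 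That reduction is available to you and would make your fibre computation a short verification; without it, the step you label as bookkeeping is an unproven claim rather than a routine check.
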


For general almost periodic orthogonal representations of $\ZZ$ we can prove that the isomorphism class of the free Bogoljubov crossed product depends at most on their dimension and on the concrete subgroup of $\rS^1$ generated by the eigenvalues of their complexification.  More generally, we have the following result.
\begin{thmstar}[See Theorem \ref{thm:dependence-on-subgroup-only}]
The isomorphism class of the free Bogoljubov crossed product associated with an orthogonal representation $\pi$ of $\ZZ$ with almost periodic part $\pi_\mathrm{ap}$ depends at most on the weakly mixing part of $\pi$, the dimension of $\pi_\mathrm{ap}$ and the concrete embedding into $\rS^1$ of the group generated by the eigenvalues of the complexification of $\pi_\mathrm{ap}$.
\end{thmstar}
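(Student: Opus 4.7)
The plan is to use the amalgamated free product decomposition from Theorem \ref{thm:afp-decomposition-ap} in order to reduce the statement to an eigenvalue-swapping lemma. First I would decompose the almost periodic part $\pi_\mathrm{ap}$ into irreducible orthogonal sub-representations $\pi_{\lambda_i}$ indexed by the eigenvalues $\lambda_i \in \rS^1$ of its complexification, and invoke Theorem \ref{thm:afp-decomposition-ap} to realise $\Gamma(H, \ZZ, \pi)''$ as an amalgamated free product over $\rL \ZZ$ of the weakly mixing piece $\Gamma(H_\mathrm{wm}, \ZZ, \pi_\mathrm{wm})''$ together with the pieces $\Gamma(H_{\lambda_i}, \ZZ, \pi_{\lambda_i})''$.

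Since $\pi_\mathrm{wm}$ is held fixed in the statement, it would then suffice to show that the amalgamated free product of the eigenvalue pieces over $\rL \ZZ$ depends only on the total dimension $\sum_i \dim \pi_{\lambda_i}$ and on the subgroup $\Lambda \subset \rS^1$ generated by the $\lambda_i$. I would establish this through a \emph{swapping lemma}: whenever $\lambda_i$ already lies in the subgroup generated by the remaining eigenvalues and $\mu \in \Lambda$ satisfies $\dim \pi_\mu = \dim \pi_{\lambda_i}$, the amalgamated free product is unchanged when the piece $\pi_{\lambda_i}$ is replaced by $\pi_\mu$. Iterating this swap would allow any two almost periodic representations sharing dimension and generated subgroup to be transformed into one another while preserving the isomorphism type of the crossed product.

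The construction of a single swap would combine the cocycle crossed product presentation from Proposition \ref{prop:cocycle-crossed-product} with freeness with amalgamation over $\rL \ZZ$. The guiding intuition is that, when $\mu$ is expressible as a word in the remaining eigenvalues, a unitary spanning the $\mu$-spectral subspace can be manufactured from the other almost periodic pieces after a suitable cocycle twist in $\rL \ZZ$, so that the slot previously occupied by $\lambda_i$ is free to be reset to the value $\mu$. For $\pi$ non-faithful this procedure should specialise to Theorem \ref{thm:classification-periodic}, which I would use as a sanity check and as a base case for the argument.

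The main obstacle I expect is carrying out the swap over $\rL \ZZ$ rather than over $\CC$: one has to exploit that the amalgam is a maximal abelian subalgebra and that each piece $\Gamma(H_{\lambda_i}, \ZZ, \pi_{\lambda_i})''$ is presented concretely enough (via Proposition \ref{prop:cocycle-crossed-product}) to allow the manipulation of a single eigenvalue without disturbing the remaining freeness. Once the swapping lemma is in place, the theorem follows by iteratively reducing any two representations with matching invariants to a common normal form.
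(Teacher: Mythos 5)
Your overall strategy --- split off the weakly mixing part via the amalgamated free product decomposition over $\rL\ZZ$ and then modify the almost periodic eigenvalues one at a time --- is the same as the paper's, but the core of your argument, the swapping lemma, has two genuine problems. First, it is not actually proved: the mechanism you describe (manufacturing a unitary spanning the $\mu$-spectral subspace from the other pieces after a cocycle twist) remains a heuristic, and one ingredient you lean on is false: $A_\pi = \rL\ZZ$ is \emph{not} maximal abelian in $M_\pi$; by Proposition \ref{prop:relative-commutant-ap} its relative commutant is $\rL G \vnt A_\pi$ with $G$ a nontrivial free group as soon as $\dim \pi_{\mathrm{ap}} \geq 2$. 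Second, and more seriously, the swap as you formulate it --- replace $\pi_{\lambda_i}$ by $\pi_\mu$ \emph{when $\lambda_i$ lies in the subgroup generated by the remaining eigenvalues} --- cannot connect all pairs of representations covered by the theorem. Take $\pi$ and $\rho$ both irreducible and two-dimensional with eigenvalues $\lambda$ and $\mu = \lambda^k$, $\gcd(k,T)=1$, generating the same cyclic subgroup of order $T$: there are no remaining eigenvalues, so your hypothesis is never satisfied, yet the theorem (consistently with Theorem \ref{thm:classification-periodic}) asserts $M_\pi \cong M_\rho$.

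The paper's proof avoids both problems by working one level lower. Via Theorem \ref{thm:afp-decomposition-ap} it writes $M_{\pi_{\mathrm{ap}}} \cong \freegrp{\dim \pi_{\mathrm{ap}}} \ltimes \Linfty(\rS^1)$, where each two-dimensional irreducible component contributes \emph{two} free generators: one acting by rotation by $\lambda_i$ and one acting trivially, namely the Haar unitary from the polar decomposition of the circular element $s(\xi) + i s(\eta)$, which commutes with $A_\pi$. The desired isomorphism is then produced purely group-theoretically by Nielsen-type changes of the free basis (Lemma \ref{lem:change-free-basis}): multiplying a generator by a word in the \emph{other} generators yields a new free basis with prescribed rotation labels, and the trivially-acting generators supply exactly the slack that your piece-by-piece swap lacks (in the example above, $r = x \tilde y^{\,k-1}$ with $\tilde y = yx$ acts by $\mu$, and $\tilde y r^{-l}$ with $kl \equiv 1 \bmod T$ acts trivially). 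If you want to rescue your approach, you should replace the swap of irreducible pieces by this change of free basis inside a single crossed product by a free group --- at which point you are reproducing the paper's argument, including its case analysis for matching up the multiplicities of the eigenvalues $\pm 1$.
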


In contrast to the preceding result, we show later that representations with almost periodic parts of different dimension can be non-isomorphic.
\begin{thmstar}[See Theorem \ref{thm:flexibility-for-left-regular} and Theorem \ref{thm:rigidity-result}]
If $\lambda$ denotes the left regular orthogonal representation of $\ZZ$ and $\mathbb{1}$ denotes its trivial representation, then
  \[\Gamma(\ltwo(\ZZ) \oplus \CC, \ZZ, \lambda \oplus \mathbb{1})'' \cong \Gamma(\ltwo(\ZZ), \ZZ, \lambda)'' \not \cong \Gamma(\ltwo(\ZZ) \oplus \CC^2, \ZZ, \lambda \oplus 2 \cdot \mathbb{1})'' \eqstop \]
\end{thmstar}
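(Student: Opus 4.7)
The plan is to treat both statements separately, with both arguments going through the amalgamated free product decomposition
\[
\Gamma(H_1 \oplus H_2, \ZZ, \pi_1 \oplus \pi_2)'' \cong \Gamma(H_1, \ZZ, \pi_1)'' *_{\rL \ZZ} \Gamma(H_2, \ZZ, \pi_2)''
\]
of a free Bogoljubov crossed product with respect to a direct sum of orthogonal representations, which is part of the general structural material of Section \ref{sec:general}. Applied with $\pi_1 = \lambda$ and $\pi_2$ equal to a finite multiple of the trivial representation, this rewrites both algebras as amalgamated free products over $\rL \ZZ \cong \Linfty(\mathbb{T})$.

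For the isomorphism I first identify $\Gamma(\ltwo(\ZZ), \ZZ, \lambda)''$ with $\rL\freegrp{2}$. Writing $s_n = s(\delta_n)$ for the free semicircular generators of $\Gamma(\ltwo(\ZZ))''$ and $u$ for the canonical implementing unitary, so that $s_n = u^n s_0 u^{-n}$, we have $\Gamma(\ltwo(\ZZ), \ZZ, \lambda)'' = W^*(s_0, u)$. A direct moment calculation — move every $u^n$ to the right of a polynomial $p(s_0)$ via $u^n p(s_0) = p(s_n) u^n$ and then apply freeness of the family $(s_n)_{n \in \ZZ}$ to the resulting alternating words — shows that $s_0$ and $u$ are $*$-free. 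Hence $W^*(s_0, u)$ is the tracial free product of two diffuse abelian tracial von Neumann algebras and so equals $\rL\freegrp{2}$. Combining this with the AFP decomposition above and with $\Gamma(\CC, \ZZ, \mathbb{1})'' \cong \Linfty([0,1]) \vnt \rL\ZZ$, I obtain
\[
\Gamma(\ltwo(\ZZ) \oplus \CC, \ZZ, \lambda \oplus \mathbb{1})'' \cong \rL\freegrp{2} *_{\rL\ZZ} \bigl( \rL\ZZ \vnt \Linfty([0,1]) \bigr) \eqstop
\]
The remaining step is to identify this amalgamated free product with $\rL\freegrp{2}$. It falls within Dykema's analysis of amalgamated free products of interpolated free group factors and hyperfinite algebras, and I would treat it either by appealing to Shlyakhtenko's classification of $\rL\ZZ$-valued semicircular systems in \cite{shlyakhtenko99}, or by reducing via compression to the finite-dimensional amalgam setting of Dykema--Redelmeier \cite{dykemaredelmeier11}, where the free dimension bookkeeping gives $\fdim = 2 + 1 - 1 = 2$ and hence $\rL\freegrp{2}$.

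For the non-isomorphism I argue by a solidity obstruction. The algebra $\Gamma(\ltwo(\ZZ), \ZZ, \lambda)'' \cong \rL\freegrp{2}$ is solid by Ozawa \cite{ozawa04-solid}: every diffuse subalgebra has amenable relative commutant. By contrast, the larger algebra $\Gamma(\ltwo(\ZZ) \oplus \CC^2, \ZZ, \lambda \oplus 2 \cdot \mathbb{1})''$ contains the two semicircular elements $s(v_1), s(v_2)$ attached to an orthonormal basis $v_1, v_2$ of the trivial summand $\CC^2$. These elements are fixed by $\ZZ$ and hence commute with $\rL\ZZ$, and they are free with respect to the trace, so together they generate a copy of $\rL\freegrp{2}$ inside the relative commutant $(\rL\ZZ)' \cap \Gamma(\ltwo(\ZZ) \oplus \CC^2, \ZZ, \lambda \oplus 2 \cdot \mathbb{1})''$. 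This relative commutant of the diffuse subalgebra $\rL\ZZ$ is therefore non-amenable, so the algebra is not solid, contradicting any isomorphism with $\rL\freegrp{2}$. One may equivalently phrase this as a violation of strong solidity using the main result of \cite{houdayershlyakhtenko11}.

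The principal technical obstacle is the identification $\rL\freegrp{2} *_{\rL\ZZ} (\rL\ZZ \vnt \Linfty([0,1])) \cong \rL\freegrp{2}$: since the amalgam is diffuse rather than finite-dimensional, either an operator-valued semicircular description must be invoked or the computation must be reduced to a finite-dimensional amalgam by a compression argument. The remaining ingredients, namely the freeness of $s_0$ and $u$ and the solidity obstruction, are then comparatively routine.
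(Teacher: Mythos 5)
Your non-isomorphism argument is correct and matches the paper's obstruction in substance: $\rL\ZZ$ is diffuse and its relative commutant in $\Gamma(\ltwo(\ZZ)\oplus\CC^2,\ZZ,\lambda\oplus 2\cdot\mathbb{1})''$ contains the non-amenable algebra $\{s(v_1),s(v_2)\}''\cong\rL\freegrp{2}$, so that algebra is not solid, while $\rL\freegrp{2}$ is solid by \cite{ozawa04-solid}. The paper runs the same dichotomy through strong solidity (Theorem \ref{thm:free-bogoliubov-strong-solidity} versus the non-amenable normaliser of $A_\pi$, as in item \ref{it:rigidiy:solid} of Theorem \ref{thm:rigidity-result}), but your direct solidity version is complete as stated. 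The identification $M_\lambda\cong\rL\freegrp{2}$ via freeness of $s_0$ and $u$ is also fine.

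The gap is in the isomorphism half, precisely at the step you yourself flag: $\rL\freegrp{2} *_{\rL\ZZ}\bigl(\rL\ZZ\vnt\Linfty([0,1])\bigr)\cong\rL\freegrp{2}$. Of your two proposed routes, the Dykema--Redelmeier one does not apply: Theorem \ref{thm:dykemaredelmeier} requires the amalgam to be \emph{finite dimensional}, and compressing by a projection $p$ in the amalgam only yields $p M_1 p *_{pDp} pM_2p$, whose amalgam $pDp$ is still diffuse; so the bookkeeping $2+1-1=2$ is a heuristic with no theorem behind it here. The Shlyakhtenko route is the one the paper actually takes in Theorem \ref{thm:flexibility-for-left-regular}, but it is not a bare citation: one must realise $M_\lambda=\Wstar(A,X)$ with $X$ an $A$-valued semicircular element of distribution $\tau:A\ra\CC\subset A$ (this is \cite{shlyakhtenko98-applications}), and then \emph{prove} that the same $X$ is $B$-valued semicircular over $B=\rL\ZZ\vnt A$ with distribution $\tau\ot\tau$. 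That is the content of the computation $\rE_B(Xb_1X\dotsm b_nX)=\rE_A(X\rE_A(b_1)X\dotsm\rE_A(b_n)X)$, valid by freeness with amalgamation over $A$, which forces $c^{(n)}_{(X,B)}(b_1,\dotsc,b_n)=c^{(n)}_{(X,A)}((\id\ot\tau)(b_1),\dotsc,(\id\ot\tau)(b_n))$, hence $0$ for $n\neq 1$ and $\tau_B(b_1)1$ for $n=1$; only then does Example 3.3(a) of \cite{shlyakhtenko99} give $\Wstar(X,B)\cong\rL\freegrp{2}$. Your proposal stops just short of this, and it is the whole point of the proof. As an aside, there is a more elementary way to close the step that your own setup suggests: since $s_0$ and $u$ are free, $M_\lambda\cong \Wstar(s_0)*A$ with $A=\Wstar(u)$, and the general associativity $(N*A)*_AB\cong N*B$ for a trace-preserving inclusion $A\subset B$ gives $M_{\lambda\oplus\mathbb{1}}\cong\Linfty([0,1])*\bigl(\rL\ZZ\vnt\Linfty([0,1])\bigr)\cong\Linfty([0,1])*\Linfty([0,1])\cong\rL\freegrp{2}$, avoiding operator-valued semicirculars altogether.
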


The next results shows, however, that there are representations whose complexifications generate isomorphic, but different subgroups of $\rS^1$ and their free Bogoljubov crossed products are isomorphic nevertheless.
\begin{thmstar}[See Corollary \ref{cor:faithful-two-dimensional}]
\label{thm:faithful-two-dimensional-introduction}
  All faithful two dimensional representations of $\ZZ$ give rise to isomorphic free Bogoljubov crossed products.
\end{thmstar}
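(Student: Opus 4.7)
The plan is to realize $M_\theta := \Gamma(\RR^2, \ZZ, \pi_\theta)''$ for each faithful two-dimensional representation $\pi_\theta$ of $\ZZ$ (rotation by an angle $\theta$ with $\theta/(2\pi)$ irrational) as an amalgamated free product whose second factor is $\theta$-independent and whose first factor is always the hyperfinite $\text{II}_1$ factor $R$ containing a common Cartan subalgebra, and then to invoke the Connes-Feldman-Weiss uniqueness of Cartan subalgebras in $R$ to transfer the amalgam between different $\theta$.

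First I would use the polar decomposition $c = w|c|$ of the circular element $c = (s_1 + i s_2)/\sqrt{2} \in \Gamma(\RR^2)''$, with $w$ a Haar unitary freely independent from the semicircular element $|c|$, so that $\Gamma(\RR^2)'' \cong \rL\ZZ \ast \Linfty([0,1])$.  Under $\pi_\theta$ the generator of $\ZZ$ acts by $w \mapsto e^{-i\theta} w$ and fixes $|c|$; hence, letting $u$ denote the implementing unitary of the crossed product, the subalgebra $W^*(w, u) \subset M_\theta$ identifies with $\rL\ZZ \rtimes_{\sigma_\theta} \ZZ$, where $\sigma_\theta$ is rotation by $-\theta$, while $W^*(|c|, u)$ identifies with $\Linfty([0,1]) \vnt \rL\ZZ$ since $|c|$ and $u$ commute and are tensor-independent.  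Both contain the common group subalgebra $W^*(u) = \rL\ZZ$, and a direct computation using the freeness of $W^*(w)$ and $W^*(|c|)$ inside $\Gamma(\RR^2)''$ shows that alternating centered words have vanishing conditional expectation onto $W^*(u)$, which yields the amalgamated free product decomposition
\[
  M_\theta \;\cong\; \bigl(\rL\ZZ \rtimes_{\sigma_\theta}\ZZ\bigr) \ast_{\rL\ZZ} \bigl(\Linfty([0,1]) \vnt \rL\ZZ\bigr) \eqstop
\]
This decomposition should in any case be a direct instance of Theorem \ref{thm:afp-decomposition-ap}.

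Second, I would identify $A_\theta := \rL\ZZ \rtimes_{\sigma_\theta} \ZZ$ together with the inclusion $W^*(u) \subset A_\theta$.  Since $\sigma_\theta$ is a free ergodic irrational rotation on $\Linfty(\rS^1)$, one has $A_\theta \cong R$.  A short Fourier calculation with respect to $w$ shows that the relative commutant of $W^*(u)$ in $A_\theta$ equals $W^*(u)$ itself, so $W^*(u)$ is a MASA; the relations $w^k u w^{-k} = e^{i k \theta} u$ together with $A_\theta = W^*(w, u)$ show that the unitaries $w^k$ normalise $W^*(u)$ and generate $A_\theta$ together with it, so $W^*(u)$ is a Cartan subalgebra of $A_\theta$.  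By Connes-Feldman-Weiss, any two Cartan subalgebras of $R$ are conjugate by an automorphism of $R$, yielding a trace-preserving isomorphism $\Psi : A_{\theta_1} \to A_{\theta_2}$ carrying $W^*(u)$ onto $W^*(u)$; let $\alpha := \Psi|_{W^*(u)} \in \Aut(\rL\ZZ)$, which is generally non-trivial.  Extending $\alpha$ as $\id \ot \alpha$ on $\Linfty([0,1]) \vnt \rL\ZZ$ gives an automorphism of the second factor whose restriction to the amalgam is again $\alpha$; the universal property of the amalgamated free product then glues $\Psi$ and $\id \ot \alpha$ to the desired trace-preserving isomorphism $M_{\theta_1} \to M_{\theta_2}$.

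The main obstacle will be verifying the amalgamated free product decomposition of the first step, which reduces to a combinatorial freeness computation inside $\Gamma(\RR^2)''$.  The remaining steps (recognising $W^*(u)$ as a Cartan subalgebra of the hyperfinite $\text{II}_1$ factor, applying Connes-Feldman-Weiss, and gluing via the universal property of amalgamated free products) are routine once the decomposition is in place.
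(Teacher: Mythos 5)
Your proposal is correct and follows essentially the same route as the paper: Proposition \ref{prop:one-irrational-eigenvalue} likewise writes $M_\pi$ via Theorem \ref{thm:afp-decomposition-ap} as an amalgamated free product of an irrational rotation factor and a $\theta$-independent algebra over the common copy of $A_\pi$, obtains a Cartan-preserving isomorphism of the rotation factors from the orbit equivalence of the two ergodic hyperfinite equivalence relations (which is the same input as your Connes--Feldman--Weiss step, the paper just exploiting the Fourier-transform flip so that the amalgam is manifestly the Cartan $\Linfty(\rS^1)$ of $\ZZ\ltimes_\lambda\Linfty(\rS^1)$), and then extends over the free product exactly as you do.
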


Inspired by the connection between free Bogoljubov crossed products and cores of Araki-Woods factors, and classification results for free Araki-Woods factors \cite{shlyakhtenko97}, Shlyakhtenko asked at the 2011 conference on von Neumann algebras and ergodic theory at IHP, Paris, whether for an orthogonal representation $(\pi_\RR, H_\RR)$ of $\ZZ$ the isomorphism class of $\Gamma(H_\RR, \ZZ, \pi_\RR)''$ is completely determined by the representation $\bigoplus_{n \geq 1} \pi_\RR^{\otimes n}$ up to amplification.  The present paper shows that this is not the case.  We discuss other possibilities of how a classification of free Bogoljubov crossed products could look like and put forward the following conjecture in the almost periodic case.
\begin{conjstar}[See Conjecture \ref{conj:invariant-for-ap}]
The abstract isomorphism class of the subgroup generated by the eigenvalues of the complexification of an infinite dimensional, faithful, almost periodic orthogonal representation of $\ZZ$ is a complete invariant for isomorphism of the associated free Bogoljubov crossed product.
\end{conjstar}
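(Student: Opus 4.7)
My plan is to split the conjecture into two complementary statements: a \emph{flexibility} direction asserting that abstractly isomorphic eigenvalue groups $\Lambda, \Lambda' \subseteq \rS^1$ produce isomorphic free Bogoljubov crossed products, and a \emph{rigidity} direction asserting that non-isomorphic $\Lambda, \Lambda'$ produce non-isomorphic crossed products.

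For the flexibility direction I would exploit the cocycle crossed product description from Proposition \ref{prop:cocycle-crossed-product} together with the amalgamated free product decomposition of Theorem \ref{thm:afp-decomposition-ap}. Writing $\Lambda$ for the eigenvalue group of the complexification and letting $A = \rL \Lambda \cong \Linfty(\widehat{\Lambda})$ be the associated abelian von Neumann algebra, the crossed product $\Gamma(H, \ZZ, \pi)''$ should admit a presentation over $A$ as an amalgamated free product of pieces depending only on the irreducible components of $\pi_\mathrm{ap}$. The key point is then to show, via Shlyakhtenko's operator-valued semicircular construction \cite{shlyakhtenko99} over $A$ and Dykema's amalgamated free product calculus \cite{dykemaredelmeier11}, that this presentation depends only on the abstract structure of $\Lambda$ and not on the concrete embedding $\Lambda \hookrightarrow \rS^1$ nor on the individual eigenvalues. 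Concretely, one would reduce to linking any two faithful orthogonal representations whose eigenvalue groups are abstractly isomorphic by a chain of moves --- each move either a reshuffling of one-dimensional summands generalising Theorem \ref{thm:faithful-two-dimensional-introduction} or a free-absorption step --- preserving the isomorphism class of the crossed product.

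For the rigidity direction I would leverage the intimate connection between free Bogoljubov crossed products and free Araki-Woods factors mentioned in the introduction. Given $\Gamma(H, \ZZ, \pi)''$ with $\pi$ infinite-dimensional, faithful, and almost periodic, one should extract from it an almost periodic free Araki-Woods factor whose Connes $\mathrm{Sd}$ invariant recovers the subgroup $\Lambda \subseteq \rS^1$ up to automorphism. Shlyakhtenko's classification \cite[Theorem 6.6]{shlyakhtenko97} would then pin down the abstract isomorphism class of $\Lambda$, and combining this with the $\rW^*$-rigidity tools of Houdayer-Shlyakhtenko \cite{houdayershlyakhtenko11} would rule out accidental isomorphisms across representations with non-isomorphic eigenvalue groups.

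The main obstacle is the flexibility direction. Theorem \ref{thm:faithful-two-dimensional-introduction} handles the case where $\Lambda$ is infinite cyclic, but already for $\Lambda \cong \ZZ^2$ it is unclear how to pass from one concrete embedding into $\rS^1$ to another. In Connes' classification of injective factors, such embedding independence comes for free from uniqueness of the hyperfinite II$_1$ factor; in the free probabilistic setting every intermediate step has to be verified by explicit amalgamated free product computations. Extending Dykema's machinery \cite{dykemaredelmeier11} to handle amalgamated free products over diffuse amenable bases in a way sensitive enough to keep track of the Bogoljubov action appears to be the essential new ingredient required.
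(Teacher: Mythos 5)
This statement is Conjecture \ref{conj:invariant-for-ap}: the paper does not prove it, and offers only partial evidence (Theorem \ref{thm:dependence-on-subgroup-only}, Theorem \ref{thm:classification-periodic}, Proposition \ref{prop:one-irrational-eigenvalue}), so there is no proof in the paper to compare yours against. Your proposal is a programme rather than a proof, and both halves have genuine gaps. On the flexibility side, Theorem \ref{thm:dependence-on-subgroup-only} already reduces the problem to showing that the isomorphism class does not depend on the concrete embedding of the eigenvalue group $\Lambda$ into $\rS^1$; the only case where this independence is actually known is when $\Lambda$ is infinite cyclic and generated by a single pair of conjugate eigenvalues (Proposition \ref{prop:one-irrational-eigenvalue}), and the proof there rests on orbit equivalence of irrational rotations --- a dynamical input, not an amalgamated free product computation. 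Your ``chain of moves'' is not constructed, and, as you concede, already $\Lambda \cong \ZZ^2$ is out of reach. Note also that the Dykema--Redelmeier calculus \cite{dykemaredelmeier11} as used in the paper requires a \emph{finite dimensional} amalgam; in the faithful case the natural amalgam $A_\pi \cong \rL \ZZ$ is diffuse, and the trick of splitting off a common $\Linfty([0,1])$ tensor factor that makes Theorem \ref{thm:classification-periodic} work is only available because the representation there is periodic.

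The rigidity half has a more structural problem. The Connes $\mathrm{Sd}$ invariant and Shlyakhtenko's classification \cite{shlyakhtenko97} are invariants of the type III free Araki--Woods factor, and there is no canonical way to manufacture that type III object from the abstract II$_1$ factor $M_\pi$: any such reconstruction needs auxiliary data (the position of $A_\pi$, or a dual action), which is precisely what an unrestricted isomorphism $M_{\pi_1} \cong M_{\pi_2}$ does not preserve. This is why the paper's own rigidity argument in Section \ref{sec:rigidity} has to work much harder: it first uses intertwining techniques (Theorem \ref{thm:intertwining}, via Corollary \ref{cor:full-embedding} and Proposition \ref{prop:fi-bimodule}) to relate the two canonical abelian subalgebras, and the resulting invariant --- the measure class of the $A_\pi$-$A_\pi$-bimodule $\Ltwo(M_\pi)$ cut down to corners, as in Corollaries \ref{cor:isomorphic-bimodules} and \ref{cor:measure-class-equal} --- only separates the coarse classes listed in Theorem \ref{thm:rigidity-result}. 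In particular it does not distinguish two faithful almost periodic representations whose eigenvalue groups are abstractly non-isomorphic, which is exactly what your rigidity direction would require. Neither implication of the conjecture is established by your outline.
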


In Section \ref{sec:solidity}, we describe strong solidity and solidity of a free Bogoljubov crossed product $\Gamma(H, \ZZ, \pi)''$ in terms of properties of $\pi$.  The main result of \cite{houdayershlyakhtenko11} on strong solidity of free Bogoljubov crossed products is combined with ideas of Ioana \cite{ioana12} in order to obtain a bigger class of strongly solid free Bogoljubov crossed products of $\ZZ$.
\begin{thmstar}[See Theorem \ref{thm:free-bogoliubov-strong-solidity}]
  Let $(\pi, H)$ be the direct sum of a mixing representation and a representation of dimension at most one. Then $\Gamma(H, \ZZ, \pi)''$ is strongly solid.
\end{thmstar}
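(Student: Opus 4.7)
The plan is to realise $N := \Gamma(H, \ZZ, \pi)''$ as an amalgamated free product and to combine the main theorem of \cite{houdayershlyakhtenko11} with intertwining techniques in the spirit of Ioana \cite{ioana12}.

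Write $H = H_\mathrm{m} \oplus H_0$ with $\pi_\mathrm{m}$ mixing and $\dim H_0 \leq 1$.  If $H_0 = 0$, the statement reduces directly to \cite{houdayershlyakhtenko11}, so assume $\dim H_0 = 1$.  Voiculescu's free Gaussian functor turns orthogonal direct sums into free products equivariantly with respect to orthogonal actions.  Combined with the amalgamated free product description of crossed products obtained in Section \ref{sec:general} (see Theorem \ref{thm:afp-decomposition-ap}), this yields
\[
  N \cong M_1 *_{\rL\ZZ} M_2
  \eqcomma \qquad
  M_1 := \Gamma(H_\mathrm{m})'' \rtimes \ZZ
  \eqcomma \quad
  M_2 := \Gamma(H_0)'' \rtimes \ZZ
  \eqstop
\]
By the main result of \cite{houdayershlyakhtenko11} applied to the mixing representation $\pi_\mathrm{m}$, the factor $M_1$ is strongly solid.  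Since $\Gamma(H_0)''$ is diffuse abelian and $\ZZ$ is amenable, both $M_2$ and the amalgam $\rL\ZZ$ are amenable.

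To pass strong solidity from the pieces to $N$, I would exploit that $\pi_\mathrm{m}$ being mixing makes the inclusion $\rL\ZZ \subset M_1$ mixing in Popa's sense: for $|n| \to \infty$, the conjugation $\Ad(u_n)$ sends any vector of $\Gamma(H_\mathrm{m})'' \ominus \CC 1$ asymptotically into the orthogonal complement of $\Gamma(H_\mathrm{m})''$ inside $\Ltwo(M_1)$.  Given any diffuse amenable von Neumann subalgebra $P \subset N$ with normaliser $Q := \Norm_N(P)''$, an intertwining dichotomy for amalgamated free products with mixing amalgam, following Ioana's approach in \cite{ioana12}, forces $Q$ to intertwine in the sense of Popa into either $M_1$ or $M_2$.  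In the first case strong solidity of $M_1$ yields amenability of $Q$; in the second amenability of $M_2$ does.  Strong solidity of $N$ follows.

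The main obstacle is the amalgamated free product step.  A priori only $P$ is amenable, so one must verify that the mixing hypothesis on $\rL\ZZ \subset M_1$ is strong enough to force the full normaliser $Q$, and not merely $P$, to embed into a single side.  This is precisely where Ioana's techniques, originally developed for Bernoulli crossed products, need to be transplanted to the free-probabilistic setting and merged with the free malleable deformation underlying \cite{houdayershlyakhtenko11}.  Once the intertwining dichotomy is established, the conclusion is essentially formal.
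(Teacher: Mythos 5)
Your decomposition $N \cong M_1 *_{\rL\ZZ} M_2$ and the three inputs you isolate (strong solidity of $M_1$ from \cite{houdayershlyakhtenko11}, amenability of $M_2$, mixingness of $\rL\ZZ \subset M_1$) are exactly those of the paper. However, the step you defer as the ``main obstacle'' --- forcing the whole normaliser $Q = \Norm_N(P)''$, and not merely the amenable algebra $P$, into one side of the amalgamated free product --- is precisely where the mathematical content of the theorem lives, and you do not supply it. The paper isolates this step as a standalone statement (Theorem \ref{thm:strong-solidity}): if $A \subset N$ is a mixing inclusion into a strongly solid, non-amenable, tracial von Neumann algebra and $A \subset B$ with $B$ amenable and tracial, then $N *_A B$ is strongly solid. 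As it stands, your argument therefore has a genuine gap exactly at the theorem's point of difficulty.

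Two remarks on how that gap is actually closed. First, no ``transplanting to the free-probabilistic setting'' or merging with the free malleable deformation is required: Ioana's dichotomies (Theorems \ref{thm:ioana-main} and \ref{thm:ioana-control-relative-commutant}, together with Lemmas \ref{lem:ioana-relative-commutant-in-ultrapower} and \ref{lem:ioana-control-normalizer-mixing} and Theorem \ref{thm:approximate-commutators-and-mixing-subaglebras}) are stated for arbitrary tracial amalgamated free products and apply verbatim; the deformation only enters inside the black box \cite{houdayershlyakhtenko11}. The chain one runs is: (a) show that the inclusion of the \emph{amenable} leg $B = M_2 \subset M$ is mixing, which follows from mixingness of $\rL\ZZ \subset M_1$ (itself a consequence of $\pi_\mathrm{m}$ being mixing, \cite[Theorem D.4]{vaes07}); (b) split the support projection of the non-amenable part of $\Norm_M(Q)''$ according to whether its relative commutant in the ultrapower is atomic or diffuse, and in each case use mixingness of $B \subset M$ to convert the alternatives ``$Q \prec A$'', ``$P \prec B$'' and ``$P$ amenable relative to $A$'' into an amenable direct summand, leaving only $P \prec_M M_1$. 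Second, your closing claim that $Q \prec_M M_1$ plus strong solidity of $M_1$ ``formally'' yields amenability is too quick: one must upgrade the intertwining to an honest conjugation of a corner of $P$ into $M_1$, which requires controlling the right support $v^*v$ of the intertwiner via Theorem \ref{thm:ipp}, and then one needs that the amplifications $\Cmat{n} \ot \tilde{p}M_1\tilde{p}$ of a strongly solid II$_1$ factor are again strongly solid \cite[Proposition 5.2]{houdayer10-non-free-gropup}. Without these two ingredients the final contradiction with the choice of the projection $p$ cannot be drawn.
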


Orthogonal representations that have an invariant subspace of dimension two give rise to free Bogoljubov crossed products, which are obviously not strongly solid.  In particular, all almost periodic orthogonal representations are part of this class of representations.  The next theorem describes a more general class of representations of $\ZZ$ that give rise to non-solid free Bogoljubov crossed products.  If $(\pi, H)$ is a representation of $\ZZ$, we say that a non-zero subspace $K \leq H$ is rigid if there is a sequence $(n_k)_k$ in $\ZZ$ such that $\pi(n_k)|_K$ converges to $\id_K$ strongly as $n_k \ra \infty$.
\begin{thmstar}[See Theorem \ref{thm:non-solidity}]
  If the orthogonal representation $(\pi, H)$ of $\ZZ$ has a rigid subspace of dimension two, then the free Bogoljubov crossed product $\Gamma(H, \ZZ, \pi)''$ is not solid.
\end{thmstar}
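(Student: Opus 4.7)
The plan is to reduce the theorem to the existence of a $\pi$-invariant two-dimensional subspace of $H$ and then, in $M:=\Gamma(H,\ZZ,\pi)''$, to exhibit a diffuse abelian subalgebra commuting with a non-amenable one.  First I would argue that rigidity forces the rigid subspace $K$ into the almost periodic part $H_{\mathrm{ap}}$ of $\pi$: writing $\mu_\xi$ for the scalar spectral measure attached to $\xi\in K$, rigidity translates as $\int_{\rS^1}|\lambda^{n_k}-1|^2\,d\mu_\xi(\lambda)\to 0$, and a diagonal subsequence argument over a basis of $K$ combined with a Riemann--Lebesgue style estimate should force $\mu_\xi$ to be purely atomic.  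Hence $\dim H_{\mathrm{ap}}\geq 2$ and I would pick a $\pi$-invariant two-dimensional subspace $K'\subseteq H_{\mathrm{ap}}$; the orthogonal transformation $\pi(1)|_{K'}\in O(K')$ is then either of finite order or an irrational rotation.

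In the finite-order subcase with $\pi(m)|_{K'}=\id_{K'}$, the automorphism $\Ad u^m$ is trivial on $\Gamma(K')''\cong\rL\freegrp{2}$; consequently the diffuse abelian algebra $\langle u^m\rangle''\cong\rL\ZZ$ in $M$ commutes with the non-amenable II$_1$ subfactor $\Gamma(K')''\subset M$, which witnesses non-solidity.

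In the irrational-rotation subcase with angle $\theta$, the closure of $\{\pi(n)|_{K'}:n\in\ZZ\}$ inside $O(K')$ is the full rotation group $SO(2)\cong\rS^1$, so the fixed-point subalgebra $N=(\Gamma(K')'')^{\Ad u}$ coincides with the fixed-point subalgebra of the Bogoljubov action of $\rS^1$ on $\Gamma(K')''\cong\rL\freegrp{2}$.  Using the polar decomposition $c=w|c|$ of a circular generator, with $w$ a Haar unitary $\ast$-free from the positive element $|c|$, the rotation acts as $\sigma(w)=e^{i\theta}w$ and $\sigma(|c|)=|c|$; thus $N$ contains the subalgebra generated by the conjugates $\{w^{n}|c|w^{-n}:n\in\ZZ\}$.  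The subalgebras $w^{n}\{|c|\}''w^{-n}$ form a $\ast$-free family inside $\rL\ZZ\ast\rL^\infty$, and I would identify their free product with the free product of countably many copies of $\rL^\infty$, which is isomorphic to $\rL\freegrp{\infty}$ and in particular non-amenable.  Since $N$ commutes with $u$ and $\langle u\rangle''\cong\rL\ZZ$ is diffuse, $M$ is again not solid.

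The hardest part will be the spectral reduction $K\subseteq H_{\mathrm{ap}}$ (where two-dimensionality of $K$ is probably essential to rule out singular continuous spectral measures that are not Rajchman) together with the verification of non-amenability of the fixed-point algebra in the irrational-rotation subcase.  Should the reduction step prove awkward, the alternative would be to work directly with the asymptotically centralising sequence $(u^{n_k})$ for $\Gamma(K)''\subseteq M$ and deduce non-solidity through an ultrapower/intertwining argument in the spirit of \cite{houdayershlyakhtenko11,ioana12}.
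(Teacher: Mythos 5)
Your reduction to a $\pi$-invariant two-dimensional subspace is where the argument breaks down. Rigidity of a subspace $K$ does not force the spectral measures $\mu_\xi$, $\xi \in K$, to be atomic: the condition $\int_{\rS^1}|\lambda^{n_k}-1|^2\,\rmd\mu_\xi(\lambda)\ra 0$ along a sequence $n_k \ra \infty$ is precisely the classical notion of a rigidity sequence for the measure $\mu_\xi$, and continuous --- even singular continuous --- measures admitting rigidity sequences exist in abundance (this is the spectral counterpart of rigid weakly mixing transformations; take for instance $n_k = k!$ and a measure concentrated on suitable Liouville-type points of $\rS^1$). No Riemann--Lebesgue type estimate can exclude these, and two-dimensionality of $K$ does not help, since one may take two orthogonal vectors with the same continuous rigid spectral measure. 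Consequently you cannot conclude $K \subseteq H_{\mathrm{ap}}$, and the theorem would collapse to the case of a $\pi$-invariant two-dimensional subspace, which the paper already points out (in the introduction) is trivially not strongly solid; the entire content of the statement lies in the weakly mixing situation your reduction discards. Your two subcases for invariant $K'$ are essentially sound (the identification of the fixed-point algebra with a free product of countably many copies of $\{|c|\}''$, hence non-amenable, does work), but they only cover a proper subclass of the hypothesis.

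The one-sentence fallback you offer at the end is in fact the paper's entire proof, and it is shorter than either of your subcases: take orthogonal $\xi, \eta$ in the rigid subspace and set $P = \{s(\xi), s(\eta)\}'' \cong \rL\freegrp{2}$, which is non-amenable; since $\Ad(u_{n_k})(s(\zeta)) = s(\pi(n_k)\zeta) \ra s(\zeta)$ for $\zeta \in \{\xi,\eta\}$, the element $[(u_{n_k})_k] \in A_\pi^\omega$ is a Haar unitary lying in $P' \cap M_\pi^\omega$, so this relative commutant is diffuse, and Proposition 7 of \cite{ozawa04-solid} yields that $M_\pi$ is not solid. I recommend discarding the spectral reduction entirely and writing out this ultrapower argument, which needs no case distinction and no invariance of the rigid subspace.
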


We make the conjecture that this theorem describes all non-solid free Bogoljubov crossed products of the integers.
\begin{conjstar}[See Conjecture \ref{conj:solidity}]
  If $(\pi, H)$ is an orthogonal representation of $\ZZ$, then the following are equivalent.
  \begin{itemize}
  \item $\Gamma(H, \ZZ, \pi)''$ is solid.
  \item $\Gamma(H, \ZZ, \pi)''$ is strongly solid.
  \item $\pi$ has no rigid subspace of dimension two.
  \end{itemize}
\end{conjstar}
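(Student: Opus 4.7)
Since the statement is a conjecture, I present a possible attack rather than a completed argument. The implication (ii) $\Rightarrow$ (i) is immediate, and (i) $\Rightarrow$ (iii) is precisely the contrapositive of Theorem \ref{thm:non-solidity}. So the entire content is (iii) $\Rightarrow$ (ii): if $\pi$ has no rigid subspace of dimension two, then $\Gamma(H,\ZZ,\pi)''$ is strongly solid.

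My first move is a spectral decomposition of $\pi$. For an orthogonal representation of $\ZZ$, the complexification carries a spectral measure on $\rS^1$, and a unit vector $v \in H$ is rigid iff its spectral measure is a rigid measure. The hypothesis forces the closure of the set of rigid vectors to be a real subspace of dimension at most one, giving an orthogonal decomposition $\pi = \pi_0 \oplus \pi_1$ with $\dim \pi_1 \leq 1$ and $\pi_0$ admitting no non-zero rigid vector. Note that this in particular rules out any finite-dimensional invariant subspace of $\pi_0$ of dimension $\geq 1$, but it is a strictly weaker condition than mixing, since $\pi_0$ may still have singular continuous spectrum with non-vanishing Fourier coefficients.

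The plan is then to extend Theorem \ref{thm:free-bogoliubov-strong-solidity} from ``mixing $\oplus$ dim $\leq 1$'' to ``no rigid vector $\oplus$ dim $\leq 1$'' following the same blueprint: combine the Houdayer--Shlyakhtenko strong solidity theorem of \cite{houdayershlyakhtenko11}, Ioana's cocycle and intertwining techniques from \cite{ioana12}, and Popa's free malleable deformation from \cite{popa06-non-commutative-bernoulli-shifts}. The crucial step in these strategies is a bimodule-mixing statement: for every diffuse amenable subalgebra $A \subset M := \Gamma(H,\ZZ,\pi)''$ with $A \not\intertwines \rL\ZZ$, Popa's s-malleable deformation should converge uniformly on the unit ball of $\Norm_M(A)$. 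Under the mixing hypothesis of \cite{houdayershlyakhtenko11}, this uniform convergence drops out of $\langle \pi(n)\xi,\eta\rangle \to 0$. The task is to rerun the same argument replacing mixing of $\pi$ by the weaker statement that $\pi_0$ has no rigid vector, by means of a careful spectral analysis of the symmetric Fock bimodule of $\pi_0$.

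The hard part will be precisely this spectral bookkeeping. Absence of a rigid two-plane allows behaviour that plain mixing forbids: singular continuous spectrum, Rajchman failure, and sequences $n_k$ along which $\pi(n_k)\xi$ returns close to $\xi$ on a one-dimensional subspace. A workable approach is probably to develop a bimodule-theoretic dichotomy along the lines of Peterson's $\Ltwo$-techniques or Ioana's compact-over-subalgebra formalism, so that weak compactness of $\Norm_M(A)''$ combined with convergence of the free malleable deformation forces an intertwining that, in turn, produces a two-dimensional rigid subspace of $H$, contradicting the hypothesis. Turning this heuristic into a proof in the presence of singular continuous spectrum is the obstacle, and is precisely the gap that justifies the conjectural status of the statement.
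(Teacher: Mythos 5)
The statement you are addressing is stated in the paper only as a conjecture (Conjecture \ref{conj:solidity}); the paper gives no proof, and none is known. Your proposal correctly isolates the state of the art: (ii) $\Rightarrow$ (i) is standard for II$_1$ factors (pass to a diffuse abelian subalgebra $B \subset A$ and use $A' \cap M \subset B' \cap M \subset \Norm_M(B)''$), (i) $\Rightarrow$ (iii) is exactly the contrapositive of Theorem \ref{thm:non-solidity}, and (iii) $\Rightarrow$ (ii) is the open content. The paper's own supporting evidence for that direction is Theorem \ref{thm:free-bogoliubov-strong-solidity} (the case where the non-rigid summand is genuinely mixing) together with Theorem A of \cite{houdayer12-structure}, and your plan of rerunning the Houdayer--Shlyakhtenko and Ioana machinery under the weaker hypothesis of mild mixing is the natural attack. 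You are also right about where it breaks: absence of a rigid subspace is strictly weaker than decay of matrix coefficients, so the mixing-type estimates that drive Proposition \ref{prop:right-finite-bimodules} and the mixing-inclusion lemmas from \cite{ioana12} are simply unavailable for singular continuous spectrum with non-vanishing Fourier coefficients. Your write-up is an honest research plan, not a proof, and it correctly says so.

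One step in your first move is asserted where it needs an argument: it is not a priori clear that the rigid vectors of $\pi$ form a subspace, since different vectors may be rigid along incompatible sequences $(n_k)$. It does hold under hypothesis (iii), for the following reason. If $\pi(n_k)\xi \ra \xi$, then the same sequence witnesses rigidity of the whole closed $\ZZ$-cyclic subspace generated by $\xi$: since $\ZZ$ is abelian, $\pi(n_k)\pi(m)\xi = \pi(m)\pi(n_k)\xi \ra \pi(m)\xi$, and one passes to the closed linear span by an $\varepsilon/3$ argument using $\|\pi(n_k)\| = 1$. Under (iii) this cyclic subspace must be one-dimensional, so every rigid vector is an eigenvector for the eigenvalue $1$ or $-1$; and any two linearly independent such eigenvectors span a two-dimensional subspace that is rigid along the even integers, again contradicting (iii). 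Hence the rigid vectors form an invariant subspace of dimension at most one and your decomposition $\pi = \pi_0 \oplus \pi_1$ with $\dim \pi_1 \leq 1$ and $\pi_0$ mildly mixing is legitimate. With that repaired, what remains is exactly the gap you identify, which is why the statement has conjectural status in the paper.
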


In Section \ref{sec:rigidity}, we prove a rigidity result for free Bogoljubov crossed products associated with orthogonal representations having at least a two dimensional almost periodic part. Due to the lack of invariants for bimodules over abelian von Neumann algebras, we can obtain only some non-isomorphism results.
\begin{thmstar}[See Theorem \ref{thm:rigidity-result}]
\label{thm:rigidity-result-introduction}
  No free Bogoljubov crossed product associated with a representation in the following classes is isomorphic to a free Bogoljubov crossed product associated with a representation in the other classes.
  \begin{itemize}
  \item The class of representations $\lambda \oplus \pi$, where $\lambda$ is the left regular representation of $\ZZ$ and $\pi$ is a faithful almost periodic representation of dimension at least $2$.
  \item The class of representations $\lambda \oplus \pi$, where $\lambda$ is the left regular representation of $\ZZ$ and $\pi$ is a non-faithful almost periodic representation of dimension at least $2$.
  \item The class of representations $\rho \oplus \pi$, where $\rho$ is a representation of $\ZZ$ whose spectral measure $\mu$ and all of its convolutions $\mu^{*n}$ are non-atomic and singular with respect to the Lebesgue measure on $\rS^1$ and $\pi$ is a faithful almost periodic representation of dimension at least $2$.
  \item The class of representations $\rho \oplus \pi$,  where $\rho$ is a representation of $\ZZ$ whose spectral measure $\mu$ and all of its convolutions $\mu^{*n}$ are non-atomic and singular with respect to the Lebesgue measure and $\pi$ is a non-faithful almost periodic representation of dimension at least $2$.
  \item Faithful almost periodic representations of dimension at least $2$.
  \item Non-faithful almost periodic representations of dimension at least $2$.
  \item The class of representations $\rho \oplus \pi$, where $\rho$ is mixing and $\dim \pi \leq 1$.
  \end{itemize}
\end{thmstar}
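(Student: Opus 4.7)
The plan is to separate the seven classes by a cascade of invariants established earlier in the paper: the solidity dichotomy of Theorems \ref{thm:free-bogoliubov-strong-solidity} and \ref{thm:non-solidity}, the tensor product decomposition of non-faithful almost periodic crossed products in Theorem \ref{thm:classification-periodic} together with Corollary \ref{cor:factorial}, and the quasi-normaliser computation of Corollary \ref{cor:normaliser-quasi-normaliser} combined with the main normaliser result of \cite{houdayershlyakhtenko11}.

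First I would peel off the last class, that is $\rho \oplus \pi$ with $\rho$ mixing and $\dim \pi \leq 1$, by solidity: Theorem \ref{thm:free-bogoliubov-strong-solidity} gives strong solidity in that case, whereas every representation in the other six classes carries an almost periodic summand of dimension at least two, which is automatically a rigid subspace of dimension two, so Theorem \ref{thm:non-solidity} prevents solidity. Next I would peel off the non-faithful almost periodic class: by Theorem \ref{thm:classification-periodic} its crossed product is $\Linfty([0,1]) \vnt \rL\freegrp{r}$, which is not a factor, while the remaining five classes give factors (the faithful almost periodic class by Corollary \ref{cor:factorial}, and the four classes with a weakly mixing summand because freeness and ergodicity of the $\ZZ$-action on the almost periodic corner are inherited).

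The bulk of the work is distinguishing the five factor classes pairwise, and this uses the rigidity of the canonical subalgebra $\rL\ZZ$ inside $M := \Gamma(H, \ZZ, \pi)''$. For an isomorphism $\Phi : M \to M'$, the main theorem of \cite{houdayershlyakhtenko11} together with Popa's intertwining-by-bimodules should force $\Phi$ to conjugate $\rL\ZZ$ onto $\rL\ZZ'$. By Corollary \ref{cor:normaliser-quasi-normaliser} the quasi-normaliser of $\rL\ZZ$ is then identified with $\rL\ZZ \vee \Gamma(H_{\mathrm{ap}}, \ZZ, \pi_{\mathrm{ap}})''$, so $\Phi$ induces an isomorphism between the almost periodic quasi-normaliser algebras on both sides that preserves $\rL\ZZ$. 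Applying Theorem \ref{thm:classification-periodic} to this induced isomorphism distinguishes the faithful from the non-faithful almost periodic case and hence handles the faithful/non-faithful dichotomy in each weakly mixing setting. Comparing the quasi-normaliser with the whole of $M$ also separates the two purely almost periodic classes from the four with a non-trivial weakly mixing part.

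The last identifications to rule out are between classes with weakly mixing summand $\lambda$ and those with weakly mixing summand $\rho$ having singular spectral measure $\mu$. Once $\rL\ZZ$ and its quasi-normaliser have been matched by $\Phi$, the orthogonal complement $\Ltwo(M) \ominus \Ltwo(\QN_M(\rL\ZZ)'')$ is an $\rL\ZZ$-$\rL\ZZ$ bimodule whose spectral type remembers the weakly mixing part of the underlying representation through the full Fock space construction. The left regular representation $\lambda$ produces Lebesgue spectrum on every tensor power, while the hypothesis that all convolutions $\mu^{*n}$ remain non-atomic and singular with respect to Lebesgue measure forces the $\rho$-side to stay singular, so the two bimodules are inequivalent. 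The main obstacle is precisely this bimodule step: as the introduction flags, bimodules over an abelian algebra admit only coarse invariants, and the convolution hypothesis on $\rho$ is exactly what keeps the absolutely-continuous-versus-singular dichotomy legible under isomorphism.
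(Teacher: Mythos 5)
Your first two reductions are sound: peeling off the mixing class by the strong solidity dichotomy matches the paper (which phrases the non-solid side as ``$A_\pi$ is amenable, diffuse, with non-amenable normaliser'' rather than invoking Theorem \ref{thm:non-solidity}, but both work), and separating the purely non-faithful almost periodic class by non-factoriality via Corollary \ref{cor:factorial} is a legitimate shortcut for that one class that the paper does not use. The problem is the step on which everything else rests: you assert that the main theorem of \cite{houdayershlyakhtenko11} together with Popa's intertwining-by-bimodules ``should force $\Phi$ to conjugate $\rL\ZZ$ onto $\rL\ZZ'$.'' This does not follow. What those tools give is $A \prec_M^{\rmf} B$ and $B \prec_M^{\rmf} A$, i.e.\ (after Vaes's Proposition \ref{prop:fi-bimodule}) a \emph{finite index} $A$-$B$-subbimodule of $\Ltwo(M)$, hence only a partial isometry $v \in pMq$ and an isomorphism $\phi: Ap \to Bq$ of \emph{corners} with $av = v\phi(a)$. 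Since $A = \rL\ZZ$ is neither maximal abelian nor Cartan in $M$ (its relative commutant is $\rL G \vnt A$ with $G$ a free group, and its normaliser is only $M_{\mathrm{ap}}$), there is no uniqueness theorem available to upgrade this to unitary conjugacy. This is exactly the obstruction the paper flags in the introduction (``due to the lack of invariants for bimodules over abelian von Neumann algebras, we can obtain only some non-isomorphism results''), and it is why the paper's Theorem \ref{thm:intertwining} and Corollary \ref{cor:isomorphic-bimodules} stop at an isomorphism of corners $A_1p_1 \cong A_2p_2$ carrying $\bim{A_1p_1}{(p_1\Ltwo(M)p_1)}{A_1p_1}$ onto $\bim{A_2p_2}{(p_2\Ltwo(M)p_2)}{A_2p_2}$ (using the infinite multiplicity from Proposition \ref{prop:spectral-decomposition-FBCP} to replace the support projections of $v$ by $p_1, p_2$).

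Consequently your subsequent steps --- matching the quasi-normalisers, applying Theorem \ref{thm:classification-periodic} to the induced isomorphism of $M_{\mathrm{ap}}$, and comparing $\Ltwo(M) \ominus \Ltwo(\QN_M(\rL\ZZ)'')$ --- are not available as stated, because they all presuppose that $\Phi$ globally preserves the canonical abelian subalgebra. The paper instead extracts invariants that survive cutting by projections: the atoms of the spectral invariant of $\bim{pA}{p\Ltwo(M)p}{pA}$ form an ergodic equivalence relation on $B \times B$ if and only if the almost periodic part is faithful (this is how classes with faithful versus non-faithful almost periodic part are separated even when the total representation is faithful, e.g.\ (i) versus (ii)); the corner bimodule decomposes as a direct sum of finite index bimodules if and only if there is no weakly mixing part; and the measure-class identity of Corollary \ref{cor:measure-class-equal}, $\varphi_*([\sum_{n}\mu_1^{*n} * \delta_{\varphi(s)}]|_{B_1}) = [\sum_n \mu_2^{*n} * \delta_s]|_{B_2}$, separates Lebesgue spectrum from singular spectrum. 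Your intuition about which spectral features distinguish the classes is correct, but to close the gap you must reformulate each of these as an invariant of the restricted bimodule $p\Ltwo(M)p$ rather than of the pair $(\rL\ZZ \subset M)$ up to conjugacy.
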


\section*{Acknowledgements}
\label{sec:acknowledgements}

We would like to thank Stefaan Vaes, our advisor, for suggesting working on this topic.  Moreover, we want to thank him for useful discussions and for helping us to improve the exposition in this article.


\section{Preliminaries}
\label{sec:preliminaries}

\subsection{Orthogonal representations of $\ZZ$ and free Bogoljubov shifts}
\label{sec:preliminaries:representations}

With a real Hilbert space $H$,  Voiculescu's \emph{free Gaussian functor} associates a von Neumann algebra $\Gamma(H)'' \cong \rL \freegrp{\dim H}$ \cite{voiculescudykemanica92}.  For every vector $\xi \in H$, we have a self-adjoint element $s(\xi) \in \Gamma(H)''$ and $\Gamma(H)''$ is generated by these elements.  If $\xi, \eta \in H$ are orthogonal then $s(\xi) + i s(\eta)$ is an element with circular distribution with respect to the trace on $\Gamma(H)''$.  In particular, the polar decomposition of $s(\xi) + i s(\eta)$ equals $a \cdot u$, where $a, u$ are $*$-free from each other, $a$ has a quarter-circular distribution and $u$ is a Haar unitary.  The free Gaussian construction $\Gamma(H)''$ acts by construction on the full Fock space $\CC \Omega \oplus \bigoplus_{n \geq 1} H^{\ot n}$ where $\Omega$ is called the vacuum vector.  It is cyclic and separating for $\Gamma(H)''$ and $\Gamma(H)''\Omega \supset H^{\ot_\mathrm{alg} n}$ for all $n \in \NN$.  Hence, for $\xi_1 \ot \dotsm \ot \xi_n \in H^{\ot_\mathrm{alg} n}$, there is a unique element $W(\xi_1 \ot \dotsm \ot \xi_n) \in \Gamma(H)''$ such that $W(\xi_1 \ot \dotsm \ot \xi_n) \Omega = \xi_1 \ot \dotsm \ot \xi_n$.

The free Gaussian construction is functorial for isometries, so that an orthogonal representation $(\pi, H)$ of a group $G$ yields a trace preserving action $G \grpaction{} \Gamma(H)''$, which is completely determined by $g \cdot s(\xi) = s(\pi(g)\xi)$.  If $\xi_1 \ot \dotsm \ot \xi_n \in H^{\ot_{\mathrm{alg}} n}$ and $g \in G$, then $g \cdot W(\xi_1 \ot \dotsm \ot \xi_n) = W(\pi(g)\xi_1 \ot \dotsm \ot \pi(g)\xi_n)$.

 An action obtained by the free Gaussian functor is called \emph{free Bogoljubov action}.  If $G \grpaction{} \Gamma(H)''$ is the free Bogoljubov action associated with $(\pi, H)$, then the representation of $G$ on $\Ltwo(\Gamma(H)'') \ominus \CC \cdot 1$ is isomorphic with $\bigoplus_{n \geq 1} \pi^{\ot n}$.  The associated von Neumann algebraic crossed product $\Gamma(H)'' \rtimes G$ of a free Bogoljubov action is denoted by $\Gamma(H, G, \pi)''$.  If there is no confusion possible, we denote $\Gamma(H, G, \pi)''$ by $M_\pi$ and the algebra $\rL G \subset \Gamma(H, G, \pi)''$ by $A_\pi$.

An orthogonal representation $(\pi, H)$ is called \emph{almost periodic} if it is the direct sum of finite dimensional representations.  It is called \emph{periodic} if the map $\pi$ has a kernel of finite index in $G$.  We call $\pi$ \emph{weakly mixing}, if it has no finite dimensional subrepresentation.  Every orthogonal representation $(\pi, H)$ is the direct sum of an almost periodic representation $(\pi_{\mathrm{ap}}, H_{\mathrm{ap}})$ and a weakly mixing representation $(\pi_{\mathrm{wm}}, H_{\mathrm{wm}})$.

Spectral theory says that unitary representations $\pi$ of $\ZZ$ correspond to pairs $(\mu, N)$, where $\mu$ is a Borel measure on $\rS^1$ and $N$ is a function with values in $\NN \cup \{\infty\}$ called the multiplicity function of $\pi$.  The measure $\mu$ and the equivalence class of $N$ up to changing it on $\mu$-negligible sets are uniquely determined by $\pi$.  Given any orthogonal representation $(\pi, H)$ of $\ZZ$, denote by $(\pi_\CC, H_\CC)$ its complexification.  Note that a pair $(\mu, N)$ as above is associated with a complexification of an orthogonal representation if and only if $\mu$ and $N$ are invariant under complex conjugation on $\rS^1 \subset \CC$.  An orthogonal representation $(\pi, H)$ is weakly mixing if and only if $\mu$ has no atoms.  It is almost periodic if and only if the measure associated with $(\pi_\CC,H_\CC)$ is completely atomic.  In this case the atoms of $\mu$ and the function $N$ together form the \emph{multiset of eigenvalues with multiplicity} of $\pi_\CC$.  Up to isomorphism, an almost periodic representation $\pi$ is uniquely determined by this multiset.

\subsection{Rigid subspaces of group representations}
\label{sec:rigid-subspaces}

A rigid subspace of an orthogonal representation $(\pi, H)$ of a discrete group $G$ is a non-zero Hilbert subspace $K \leq H$ such that there is a sequence $(g_n)_n$ of elements in $G$ tending to infinity that satisfies $\pi(g_n)\xi \lra \xi$ as $n \ra \infty$ for all $\xi \in K$.  Note that this terminology is borrowed from ergodic theory and has nothing to do with property (T).

A representation $\pi$ without any rigid subspace is called \emph{mildly mixing}.  The main source of mildly mixing representations of groups are mildly mixing actions \cite{schmidtwalters82}.  A probability measure preserving action $G \grpaction{} (X,\mu)$ has a rigid factor if there is a Borel subset $B \subset X$, $0 < \mu(B) < 1$ such that $\liminf_{g \ra \infty} \mu(B \Delta gB) = 0$.  We say that $G \grpaction{} (X, \mu)$ is mildly mixing if it has no rigid factor.

\begin{proposition}
  Let $G \grpaction{} (X,\mu)$ be a probability measure preserving action of a group $G$.  Then the Koopman representation $G \grpaction{} \Ltwo_0(X,\mu)$ is mildly mixing if and only if $G \grpaction{} (X, \mu)$ is mildly mixing.
\end{proposition}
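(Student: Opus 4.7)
The plan is to prove both implications directly by translating between rigid factors of the action and rigid subspaces of the Koopman representation. Throughout, I regard $\Ltwo_0(X,\mu)$ as a real Hilbert space on which the Koopman representation $(\pi(g)f)(x) = f(g^{-1}x)$ acts by orthogonal transformations; it preserves $\Ltwo_0$ because constants are $\pi$-fixed, so the rigid-subspace terminology of Section \ref{sec:rigid-subspaces} applies directly.

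For the easy direction, I would assume the action has a rigid factor $B \subset X$, so that $0 < \mu(B) < 1$ and there is a sequence $g_n \ra \infty$ in $G$ with $\mu(B \Delta g_n B) \ra 0$. Setting $\xi = \mathbbm{1}_B - \mu(B) \in \Ltwo_0(X,\mu)$, which is nonzero precisely because $0 < \mu(B) < 1$, a direct computation gives
\[
\|\pi(g_n)\xi - \xi\|_2^2 = \|\mathbbm{1}_{g_n B} - \mathbbm{1}_B\|_2^2 = \mu(g_n B \Delta B) \lra 0 \eqcomma
\]
so $\RR \xi$ is a rigid subspace of $\Ltwo_0(X,\mu)$ and the Koopman representation is not mildly mixing.

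For the converse, I would start from a rigid subspace $K \leq \Ltwo_0(X,\mu)$ with witnessing sequence $g_n \ra \infty$, pick any nonzero $\xi \in K$, and, after extracting a subsequence, assume $\pi(g_n)\xi \ra \xi$ both in $\Ltwo$ and $\mu$-almost everywhere. Since $\xi \in \Ltwo_0$ is nonzero and hence not $\mu$-a.e.\ constant, the set $\{t \in \RR : 0 < \mu(\{\xi > t\}) < 1\}$ is an open interval of positive length, and since only countably many $t$ can satisfy $\mu(\{\xi = t\}) > 0$, I may fix such a $t$ with $\mu(\{\xi = t\}) = 0$ as well. For $B = \{\xi > t\}$ one has $\{\pi(g_n)\xi > t\} = g_n B$, and dominated convergence applied to the indicators $\mathbbm{1}_{\{\pi(g_n)\xi > t\}}$ yields $\mu(g_n B \Delta B) \ra 0$, so $B$ is the desired rigid factor of the action.

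The main technical point will be this converse direction, where one has to pass from $\Ltwo$-rigidity of a vector $\xi$ to $\Ltwo$-rigidity of the indicator of a suitable sublevel set of $\xi$. This is handled by the classical slicing argument sketched above, which becomes transparent once $\Ltwo$-convergence is upgraded along a subsequence to $\mu$-almost everywhere convergence and one exploits that only countably many values of $t$ can be atoms of the distribution of $\xi$.
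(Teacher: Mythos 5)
Your proof is correct. The first implication (rigid factor $\Rightarrow$ rigid vector) is, up to taking the contrapositive, identical to the paper's: both use the vector $\xi = 1_B - \mu(B)$ and the identity $\|\pi(g_n)\xi - \xi\|_2^2 = \mu(B \Delta g_n B)$. The converse is where you genuinely diverge. The paper works with the full sequence and a purely quantitative argument: it fixes one level $\delta$ with $0 < \mu(\{\xi \geq \delta\}) < 1$ (using $\int \xi \, \rmd\mu = 0$), introduces for each $\varepsilon$ an auxiliary level $\delta' < \delta$ with $\mu(\{\xi > \delta'\} \setminus \{\xi \geq \delta\})$ small, and controls $\mu(A_\delta \Delta g_n A_\delta)$ by a Chebyshev-type estimate on $\{|\xi - g_n\xi| \geq \delta - \delta'\}$, so no subsequence extraction and no appeal to almost everywhere convergence is needed. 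You instead pass to a subsequence along which $\pi(g_n)\xi \to \xi$ almost everywhere, choose a level $t$ that is not an atom of the distribution of $\xi$, and conclude by dominated convergence; this is softer and shorter, and the subsequence is harmless because a rigid factor only requires $\liminf_{g \to \infty}\mu(B \Delta gB) = 0$. Two minor points: the set $\{t : 0 < \mu(\{\xi > t\}) < 1\}$ need not be open (e.g.\ if the distribution of $\xi$ is supported on two atoms), but it does contain the nonempty open interval $(\operatorname{ess\,inf}\xi, \operatorname{ess\,sup}\xi)$, which is all you use; and your justification that this interval is nonempty should invoke that a nonzero mean-zero function is not a.e.\ constant, which you do. Both routes are standard slicing arguments and yield the same statement; the paper's buys an estimate valid along the original sequence, yours buys brevity.
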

\begin{proof}
  First assume that the Koopman representation is mildly mixing and take $B \subset X$ a Borel subset such that there is a sequence $(g_n)_n$ in $G$ going to infinity that satisfies $\mu(B \Delta g_n B) \ra 0$.  Consider the function $\xi = \mu(B) \cdot 1 - 1_B  \in \Ltwo_0(X,\mu)$.  Then
\[
  \| \xi - g_n \xi \|_2^2
  =
  \| 1_{g_nB} - 1_B \|_2^2
  =
  \mu(B \Delta g_n B)
  \ra
  0
  \eqstop
\]
By mild mixing of $G \grpaction{} \Ltwo_0(X, \mu)$, it follows that $\xi = 0$, so $\mu(B) \in \{0,1\}$.  Hence $G \grpaction{} (X,\mu)$ is mildly mixing.

For the converse implication assume that there is a sequence $(g_n)_n$ in $G$ tending to infinity such that there is a unit vector $\xi \in \Ltwo_0(X,\mu)$ that satisfies $g_n \xi \ra \xi$.  We have to show that $G \grpaction{} (X,\mu)$ has a rigid factor.  Replacing $\xi$ by its real part, we may assume that it takes only real values.  For $\delta > 0$ define $A_\delta = \{x \amid \xi(x) \geq \delta\}$ and $B_\delta = \{x \amid \xi(x) > \delta\}$.  Since $\int_X \xi(x) \rmd \mu(x) = 0$, there is some $\delta > 0$ such that $0 < \mu(A_\delta) < 1$.  

Take $\varepsilon > 0$.  We have $\bigcap_{\delta' < \delta} B_{\delta'} = A_\delta$, so that we can choose $\delta' < \delta$ such that $\mu(B_{\delta'} \setminus A_\delta) < \varepsilon/4$.  Take $N \in \NN$ such that for all $n \geq N$ we have $\|\xi - g_n \xi\| < (\delta - \delta') \cdot \varepsilon / 4$.  Then for all $n \geq N$, we have
\begin{align*}
  \mu(A_\delta \Delta g_n A_\delta)
  & =
  \mu(A_\delta \setminus g_n A_\delta) +   \mu(A_\delta \setminus g_n^{-1} A_\delta) \\
  & <
  \mu(A_\delta \setminus g_n B_{\delta'}) +   \mu(A_\delta \setminus g_n^{-1} B_{\delta'}) + \frac{\varepsilon}{2} \\
  & \leq
  \frac{1}{(\delta - \delta')^2}\left ( \,
    \int_{A_\delta \setminus g_n B_{\delta'}} | \xi(x) - g_n \xi(x)|^2 \rmd x
    +
    \int_{A_\delta \setminus g_n^{-1}B_{\delta'}} |\xi(x) - g_n^{-1}\xi(x)|^2 \rmd x
  \right )
  + \frac{\varepsilon}{2} \\
  & \leq
  \frac{2}{(\delta - \delta')^2} \int_X |\xi(x) - g_n \xi(x)|^2 \rmd \mu(x) + \frac{\varepsilon}{2}\\
  & <
  \varepsilon
  \eqstop
\end{align*}
It follows that $\mu(A_\delta \Delta g_n A_\delta) \ra 0$ as $n \ra \infty$.  So $G \grpaction{} (X,\mu)$ is not mildly mixing.
\end{proof}

\subsection{Bimodules over von Neumann algebras}
\label{sec:bimodules}

Let $M$, $N$ be von Neumann algebras.  An $M$-$N$-bimodule is a Hilbert space $\cH$ with a normal $*$-representation of $\lambda: M \ra \bo(\cH)$ and a normal anti-$*$-representation $\rho:N \ra \bo(H)$ such that $\lambda(x) \rho(y) = \rho(y) \lambda(x)$ for all $x \in M$, $y \in N$.  If $M$, $N$ are tracial, then we have $\lmod{M}{\cH} \cong \lmod{M}{(\Ltwo(M) \ot \ltwo(\NN)^*)p}$ with $p \in M \ot \bo(\ltwo(\NN))$.  The  left dimension $\dim_{M -} \cH$ of $\lmod{M}{\cH}$ is $(\tau_M \ot \Tr)(p)$ by definition.  Similarly, we define the right dimension $\dim_{\, - N} \cH$ of $\rmod{\cH}{N}$.  We say that $\bim{M}{\cH}{N}$ is left finite, if it has finite left dimension, we call it right finite if it has finite right dimension and we say that $\cH$ is a finite index $M$-$N$-bimodule, if its left and right dimension are both finite.

If $A, B \subset M$ are abelian von Neumann algebras and $\bim{A}{\cH}{B} \subset \Ltwo(M)$ is a finite index bimodule, then there are non-zero projections $p \in A, q \in B$, a finite index inclusion $\phi: pA \ra qB$ and a non-zero partial isometry $v \in p M q$ such that $av = v\phi(a)$ for all $a \in pA$. Since $\phi$ is a finite index inclusion, we can cut down $p$ and $q$ so as to assume that $\phi$ is an isomorphism.

\subsection{The measure associated with a bimodule over an abelian von Neumann algebra}
\label{sec:measure-from-bimodule}

We describe bimodules over abelian von Neumann algebras, as in \cite[V. Appendix B]{connes94}.  Compare also with \cite[Section 3]{neshveyevstormer02} concerning our formulation.  Let $A \cong \Linfty(X, \mu)$ be an abelian von Neumann algebra and $\bim{A}{\cH}{A}$ an $A$-$A$-bimodule such that $\lambda, \rho: A \ra \bo(\cH)$ are faithful.  Then the two inclusions $\lambda, \rho: A \ra \bo(\cH)$ generate an abelian von Neumann algebra $\cA$.  Writing $[\nu]$ for the class of a measure $\nu$ and $p_1,p_2$ for the projections on the two factors of $X \times X$, we can identify $\cA \cong \Linfty(X \times X, \nu)$ where $[\nu]$ is subject to the condition $(p_1)_*([\nu]) = (p_2)_*([\nu]) = [\mu]$.  We can disintegrate $\cH$ with respect to $\nu$ and obtain a decomposition $\cH = \int^\oplus_{X \times X} \cH_{x_1,x_2} \rmd \nu(x_1,x_2)$.  Let $N: X \times X \ra \NN \cup \{\infty\}$ be the dimension function $\cH_{x_1,x_2} \mapsto \dim_\CC \cH_{x_1,x_2}$.  Then $N$ is unique up to changing it on $\nu$-negligible sets and the triple $(X, [\nu], N)$ is a conjugacy invariant for $\bim{A}{\cH}{A}$ in the following sense.  Let $(X, [\nu_X], N_X)$ and $(Y, [\nu_Y], N_Y)$ be triples as before associated with bimodules $\cH_X$ and $\cH_Y$ over $A = \Linfty(X,\mu_x)$ and $B = \Linfty(Y, \mu_y)$, respectively.  A measurable isomorphisms  $\Delta: (X, [\mu_X]) \ra (Y, [\mu_Y])$  such that $(\Delta \times \Delta)_*([\nu_X]) = [\nu_Y]$  and $N_Y \circ (\Delta \times \Delta) = N_X$ $\nu_Y$-almost everywhere induces an isomorphism $\theta:A \ra B$ and a unitary isomorphism $U: \cH_X \ra \cH_Y$ satisfying 
\[U\lambda_X(a) = \lambda_Y(\theta(a))U \text{ and } U\rho_X(a) = \rho_Y(\theta(a))U \text{ for all } a \in A \eqstop\]
Moreover, any such pair $(U, \theta)$ arises this way.  The proof of this fact works similar to that of \cite{neshveyevstormer02}.

Let $\bim{A}{\cH}{A}$ be an $A$-$A$-bimodule and identify $A \cong \Linfty(X, \mu)$.  Denote by $(X, [\nu], N)$ the spectral invariant of $\bim{A}{\cH}{A}$ as described in the previous paragraph.  If $p = 1_Y \in A$ is a non-zero projection, then it follows right away that the spectral invariant associated with $\bim{pA}{(p\cH p)}{pA}$ equals $(Y, [\nu|_{Y \times Y}], N|_{Y \times Y})$.

Let $\ZZ \grpaction{} P$ be an action of $\ZZ$ on a tracial von Neumann algebra $P$ and $M = P \rtimes \ZZ$.  Let $(\mu, N_{\pi})$ denote the spectral invariant of the representation $\pi$ on $\Ltwo(P) \ominus \CC 1$ associated with the action of $\ZZ$ on $P$.  Write $A = \rL \ZZ \cong \Linfty(\rS^1)$, where the identification is given by the Fourier transform.  We describe the spectral invariant $(\rS^1, [\nu], N)$ of the $A$-$A$-bimodule $\Ltwo(M) \ominus \Ltwo(A)$ in terms of $(\pi, N_{\pi})$.  

We first calculate the measure $\nu_{\xi \ot \delta_n}$ on $\rS^1 \times \rS^1$ defined by
\[ \int_{\rS^1 \times \rS^1}s^at^b \, \rmd \nu_{\xi \ot \delta_n}(s,t) = \langle u_a (\xi \otimes \delta_n) u_b, \xi \otimes \delta_n \rangle \eqcomma\]
with $a,b \in \ZZ$, $\xi \in \Ltwo(P) \ominus \CC 1$ and $\delta_n \in \ltwo(\ZZ)$ the canonical basis element associated with $n \in \ZZ$.  Denote by $\mu_\xi$ the measure on $\rS^1$ defined by
\[ \int_{\rS^1} s^a \, \rmd \mu_\xi(s) = \langle \pi(a) \xi , \xi \rangle \eqstop\]
We obtain for $a,b \in \ZZ$, $\xi \in \Ltwo(P) \ominus \CC 1$ and $n \in \ZZ$
\begin{align*}
  \int_{\rS^1 \times \rS^1} s^at^b \, \rmd \nu_{\xi \ot \delta_n}(s,t)
  & = \langle u_a (\xi \otimes \delta_n) u_b, \xi \otimes \delta_n \rangle \\
  & = \delta_{a, -b} \langle \pi(a) \xi, \xi \rangle \\
  & = \delta_{a, -b} \int_{\rS^1} s^a \, \rmd \mu_\xi(s) \\
  & = \int_{\rS^1 \times \rS^1} s^at^{a + b} \, \rmd(\mu_\xi \ot \lambda)(s,t) \\
  & = \int_{\rS^1 \times \rS^1} s^at^b \, \rmd T_*(\mu_\xi \ot \lambda)(s,t) \eqcomma
\end{align*}
where $T: \rS^1 \times \rS^1 \ra \rS^1 \times \rS^1: (s,t) \mapsto (s, st)$.  So $\nu_{\xi \ot \delta_n} = T_*(\mu_\xi \ot \lambda)$ for all $\xi \in \Ltwo(M) \ominus \CC 1$ and for all $n \in \ZZ$.  It follows that $[\nu] = T_*([\mu \ot \lambda])$. 

We calculate the multiplicity function $N$ of $\Ltwo(M) \ominus \Ltwo(A)$ in terms of $N_{\pi}$.  Let $Y_n, n \in \NN \cup \{\infty\}$ be pairwise disjoint Borel subsets of $\rS^1$ such that $N_{\pi}|_{Y_n} = n$ for all $n$.  There is a basis $(\xi_{n,k})_{0 \leq k < n \in \NN \cup \{\infty\}}$ of $\Ltwo(P) \ominus \CC$ such that $\mu_{\xi_{n,k}}$ is has support equal to $Y_n$.  So $\xi_{n,k} \ot \delta_l$ with $l \in \ZZ$ and $0 \leq k < n \in \NN \cup \{\infty\}$ is a basis of $\Ltwo(M) \ominus \Ltwo(A)$.  Write $Z_n = T(Y_n \times \rS^1)$.  Then
\[\int_{Z_n} s^at^b \, \rmd \nu_{\xi_{n,k} \ot \delta_l} (s,t)
   =
  \int_{Y_n \times \rS^1} s^at^{a + b} \, \rmd (\mu_{\xi_{n,k}} \ot \lambda)(s,t)
  \eqcomma \]
so the support of $\nu_{\xi_{n,k} \ot \delta_l}$ is equal to $Z_n$.  As a consequence, $N|_{Z_n} = n$ for all $n \in \NN \cup \{\infty\}$.  We obtain the following proposition.

\begin{proposition}
\label{prop:spectral-decomposition-FBCP}
  Let $(\mu, N)$ be a symmetric measure with multiplicity function on $\rS^1$ having at least one atom and let $\pi$ be the orthogonal representation of $\ZZ$ on $H = \Ltwo_\RR(\rS^1, \mu, N)$ given by $\pi(1) f = \id_{\rS^1} \cdot f$.  Identifying $\rL \ZZ \cong \Linfty(\rS^1)$ via the Fourier transform, the multiplicity function of the bimodule $\bim{\Linfty(\rS^1)}{\Gamma(H, \ZZ, \pi)''}{\Linfty(\rS^1)}$ is equal to $\infty$ almost everywhere.
\end{proposition}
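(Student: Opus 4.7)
The plan is to specialise the general computation preceding the proposition to $P = \Gamma(H)''$ and $M = \Gamma(H, \ZZ, \pi)''$. The Koopman representation of $\ZZ$ on $\Ltwo(\Gamma(H)'') \ominus \CC 1$ is isomorphic to $\bigoplus_{n \geq 1} \pi_\CC^{\otimes n}$, so the formula $N|_{T(Y_n \times \rS^1)} = n$ reduces the claim to showing that the spectral multiplicity function of $\bigoplus_{n \geq 1} \pi_\CC^{\otimes n}$ equals $\infty$ almost everywhere with respect to its spectral measure.

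The key point is to produce a non-zero $\ZZ$-fixed subspace of $H_\CC^{\otimes 2}$. Pick an atom $z_0 \in \rS^1$ of $\mu$. By symmetry of $\mu$ under complex conjugation $\bar z_0$ is also an atom, so the eigenspaces $E_{z_0}, E_{\bar z_0} \subset H_\CC$ are non-zero. Hence $F := E_{z_0} \otimes E_{\bar z_0} \subset H_\CC^{\otimes 2}$ is non-zero, and $\ZZ$ acts on it by multiplication by $z_0 \bar z_0 = 1$, so every vector of $F$ is fixed. When $z_0 = \pm 1$ the same conclusion holds with $F = E_{z_0} \otimes E_{z_0}$, since $z_0^2 = 1$.

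Fix $n_0 \geq 1$. For each $m \geq 0$, the natural $\ZZ$-equivariant isometric inclusion $H_\CC^{\otimes n_0} \otimes F^{\otimes m} \hookrightarrow H_\CC^{\otimes (n_0 + 2m)}$ realises $(\dim F)^m$ orthogonal copies of $\pi_\CC^{\otimes n_0}$ inside $\pi_\CC^{\otimes (n_0 + 2m)}$, since $F^{\otimes m}$ consists of $\ZZ$-fixed vectors. Summing over $m$ embeds $\bigoplus_{m \geq 0} (\dim F)^m \cdot \pi_\CC^{\otimes n_0}$ into the Koopman representation, and because $\sum_{m \geq 0} (\dim F)^m = \infty$ whether $\dim F$ equals $1$ or is larger, the multiplicity of the Koopman representation is $\infty$ at $\mu^{*n_0}$-almost every point. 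Letting $n_0$ range over $\NN_{\geq 1}$ exhausts the spectrum, so the multiplicity function equals $\infty$ almost everywhere, as desired. The main conceptual input is the construction of $F$ from an atom of $\mu$; once $F$ is in hand, the amplification argument by tensoring with higher powers $F^{\otimes m}$ is essentially formal.
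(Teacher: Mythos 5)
Your proof is correct and follows essentially the same route as the paper: the paper also reduces to the Koopman representation $\bigoplus_{n\geq 1}\pi^{\otimes n}$ via the computation preceding the proposition, and uses an atom $a$ together with its conjugate $\ol{a}$ to tensor with the trivial character $\chi_a\otimes\chi_{\ol a}$ (your fixed subspace $F$) and force infinite multiplicity. Your write-up merely makes explicit the step the paper compresses into ``as a consequence,'' namely running the amplification for every $\pi^{\otimes n_0}$ rather than just for $\pi$ itself, and it handles the case $a=\pm 1$ separately; no substantive difference.
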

\begin{proof}
  We have $\Gamma(H, \ZZ, \pi)'' = \Gamma(H)'' \rtimes \ZZ$, where the crossed product is taken with respect to the free Bogoljubov action of $\ZZ$ on $\Gamma(H)''$, which has $\oplus_{n \geq 1}\pi^{\ot n}$ as its associated representation on $\Ltwo(\Gamma(H)'') \ominus \CC \cdot 1$.  If $a$ is an atom of $\mu$, then also $\ol{a}$ is one.  Denote by $\chi_a$ the character of $\ZZ$ defined by $\widehat{\ZZ} \cong \rS^1$.  We have $\pi = \pi \ot (\chi_a)^n \ot (\chi_{\ol{a}})^n \leq \pi^{\ot 2n}$.  As a consequence, the multiplicity function of $\oplus_{n \geq 1} \pi^{\ot n}$ is equal to $\infty$ almost everywhere.  So, by the calculations preceding the remark, this is also the case for the multiplicity function of the bimodule $\bim{\Linfty(\rS^1)}{\Ltwo(\Gamma(H, \ZZ, \pi)'')}{\Linfty(\rS^1)}$.
\end{proof}

\begin{proposition}
\label{prop:disintegration-spectral-measure-FBCP}
  The disintegration of $[\nu]$ with respect to the projection onto the first component of $\rS^1 \times \rS^1$ is given by $[\nu] = \int [\mu * \delta_s] \, \rmd \lambda (s)$.
\end{proposition}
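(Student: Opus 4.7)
My plan is to deduce the disintegration formula via a direct computation starting from the identification $[\nu] = T_*([\mu \ot \lambda])$ established in the paragraph preceding the proposition, invoking the complex conjugation symmetry of $\mu$ noted in Section \ref{sec:preliminaries:representations} at one key step.

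First, for a bounded Borel test function $f$ on $\rS^1 \times \rS^1$, I would unfold the definition of the pushforward to obtain $\int f \, \rmd \nu = \int \int f(T(s, t)) \, \rmd \mu(s) \, \rmd \lambda(t)$. Since $T$ mixes the two coordinates via a group multiplication by $s$, a change of variables $u = st$ in the $\lambda$-integral for fixed $s$ (valid by Haar invariance of $\lambda$) together with Fubini's theorem rewrites this as an iterated integral of the form
\[
\int f \, \rmd \nu = \int \Big( \int f(u, s^{\pm 1} u) \, \rmd \mu(s) \Big) \, \rmd \lambda(u) \eqcomma
\]
where the outer variable $u$ now runs against $\lambda$. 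This already reveals that the first marginal of $\nu$ is in the class of $\lambda$.

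Second, for each fixed $u \in \rS^1$, the inner integral $\int f(u, s^{\pm 1} u) \, \rmd \mu(s)$ is the integral of $f(u, \cdot)$ against the pushforward of $\mu$ under the translation $s \mapsto s^{\pm 1} u$. Here the symmetry of $\mu$ under $s \mapsto s^{-1}$, which holds because $\mu$ is the spectral measure of the complexification of a real representation and is therefore invariant under complex conjugation on $\rS^1 \subset \CC$, is the decisive input: it identifies this pushforward with the pushforward of $\mu$ under $s \mapsto s u$, which by definition of convolution on the abelian group $\rS^1$ equals $\mu * \delta_u$. Reading off, the fiber measure over $u$ is exactly $\mu * \delta_u$, yielding the claimed disintegration $[\nu] = \int [\mu * \delta_s] \, \rmd \lambda(s)$.

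The only subtlety lies in consistently tracking the conventions for pushforwards, translations, and convolutions across the computation; there is no genuine obstacle, since the statement is about measure classes and the symmetry of $\mu$ absorbs the possible sign ambiguity in the translation direction.
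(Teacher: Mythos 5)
Your argument is correct and essentially the same as the paper's: both push the product of $\mu$ and Haar measure forward under $T$, use translation invariance of $\lambda$ so that the first marginal is Haar, and identify the fibre over $s$ as a translate of $\mu$, namely $\mu * \delta_s$. Your explicit appeal to the conjugation-symmetry of $\mu$ to absorb the inversion $s \mapsto s^{-1}$ is the same bookkeeping the paper handles implicitly by evaluating on rectangles $Y \times Z$ and writing $\mu(Z \cdot s^{-1}) = (\mu * \delta_s)(Z)$.
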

\begin{proof}
Let $Y, Z \subset \rS^1$ be Borel subsets and denote by $(\mu_s)_{s \in \rS^1}$ the constant field of measures with value $\mu$.
\begin{align*}
  (T_* \left (\int_{\rS^1} \mu_s \, \rmd \lambda(s) \right ))( Y \times Z)
  & =
  \int_Y \mu(Z \cdot s^{-1}) \, \rmd \lambda(s) \\
  & =
  \int_Y \mu * \delta_s(Z) \, \rmd \lambda(s) \\
  & =
  (\int_{\rS^1} \mu * \delta_s \, \rmd \lambda(s))(Y \times Z) \eqstop
\end{align*}
This finishes the proof.
\end{proof}

\subsection{Amalgamated free products over finite dimensional algebras}
\label{sec:afp-over-finite-dimensional-algebras}

Let $\cR_2$ denote the class of finite direct sums of hyperfinite von Neumann algebras and interpolated free group factors, equipped with a normal, faithful tracial state.  In \cite[Theorem 4.5]{dykemaredelmeier11}, amalgamated free products of elements of $\cR_2$ over finite dimensional tracial von Neumann subalgebras were shown to be in $\cR_2$ again.  Moreover, their \emph{free dimension} in the sense of Dykema \cite{dykema11-finite-vnalg} was calculated in terms of the free dimension of the factors and of the amalgam of the amalgamated free product.  We explain the free dimension and Theorem 4.5 of \cite{dykemaredelmeier11}.

The free dimension of a set of generators of a von Neumann algebra $M \in \cR_2$ is used to keep track of the parameter of interpolated free group factors.  If an interpolated free group factor has a generating sets of free dimension $r$, then it is isomorphic to $\rL \freegrp{r}$.  Following \cite{dykemaredelmeier11}, we define the class $\cF_d \subset \cR_2$, $d \in \RR_{>0}$ as the class of von Neumann algebras
\[M = D \oplus \bigoplus_{i \in I} p_i\rL \freegrp{r_i} \oplus \bigoplus_{j \in J} q_j\Cmat{n_j} \eqcomma\]
where 
\begin{itemize}
\item $p_i$ is the unit of $\rL \freegrp{r_i}$ and $q_j$ the unit of $\Cmat{n_j}$,
\item $t_i = \tau_M(p_i)$, $s_j = \frac{\tau_M(q_j)}{n_j}$ and $D$ is a diffuse hyperfinite von Neumann algebra and
\item $1 + \sum_it_i^2(r_i - 1) - \sum_j s_j^2 = d$.
\end{itemize}

Theorem 4.5 of \cite{dykemaredelmeier11} says that if $M = M_1 *_A M_2$ with $M_1, M_2 \in \cR_2$ and $A$ a finite dimensional tracial von Neumann algebra, then $M \in \cR_2$.  Moreover, if $M_1 \in \cF_{d_1}$, $M_2 \in \cF_{d_2}$ and $A \in \cF_d$, then $M \in \cF_{d_1 + d_2 - d}$.  We will use the following special case.
\begin{theorem}[See Theorem 4.5 of \cite{dykemaredelmeier11}]
\label{thm:dykemaredelmeier}
  Let $M_1 \in \cF_{d_1}$ and $M_2 \in \cF_{d_2}$ and $A \in \cF_{d}$ a common finite dimensional subalgebra of $M_1$ and $M_2$. If $M = M_1 *_A M_2$ is a non-amenable factor, then $M \cong \rL \freegrp{r}$ with $r = d_1 + d_2 - d$. 
\end{theorem}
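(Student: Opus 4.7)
The plan is to derive this statement as an immediate specialisation of the general Dykema–Redelmeier amalgamated free product theorem (Theorem 4.5 of \cite{dykemaredelmeier11}) that was just cited. All the hard combinatorial and matricial bookkeeping is contained in that reference, so I only need to unpack what the conclusion of that general theorem says under the extra factoriality and non-amenability hypotheses.

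First I would apply Theorem 4.5 of \cite{dykemaredelmeier11} to the data at hand. Since $M_1, M_2 \in \cR_2$ and $A \subset M_i$ is a finite dimensional tracial von Neumann subalgebra (hence in particular in $\cR_2$), the amalgamated free product $M = M_1 *_A M_2$ is again in $\cR_2$. Moreover, tracking the free dimension parameter, we get $M \in \cF_{d_1 + d_2 - d}$.

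Next I would invoke the defining description of the class $\cR_2$ recalled just before the statement: every element of $\cR_2$ is a finite direct sum of a diffuse hyperfinite algebra, interpolated free group factors and matrix algebras. The factoriality assumption on $M$ forces exactly one of these summands to be non-zero, so $M$ is either hyperfinite, isomorphic to some $\Cmat{n}$, or isomorphic to some $\rL \freegrp{r}$. The non-amenability hypothesis rules out the first two possibilities, so $M \cong \rL \freegrp{r}$ for some $r \in (1, \infty]$.

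To pin down $r$, I would feed $M = \rL \freegrp{r}$ back into the definition of $\cF_{d_1 + d_2 - d}$. In the notation of that definition, the only term present is a single interpolated free group factor with $t_i = 1$; the formula $1 + \sum_i t_i^2(r_i - 1) - \sum_j s_j^2 = d_1 + d_2 - d$ then collapses to $r = d_1 + d_2 - d$, which is the desired identification. The main obstacle in any of this is of course Theorem 4.5 of \cite{dykemaredelmeier11} itself, whose proof requires the full matrix model and free dimension machinery developed in \cite{dykema94-interpolated, dykema93-hyperfinite, dykema95-multi-matrix, dykema11-finite-vnalg, dykemaredelmeier11}; in the present paper this is used as a black box.
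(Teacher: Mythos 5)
Your proposal is correct and follows essentially the same route as the paper: the paper treats this statement as an immediate specialisation of Theorem 4.5 of \cite{dykemaredelmeier11}, which it recalls just beforehand (membership of $M_1 *_A M_2$ in $\cR_2$ together with the free-dimension count $M \in \cF_{d_1+d_2-d}$), and offers no further argument. Your additional remarks — that factoriality collapses the direct sum decomposition of an element of $\cR_2$ to a single summand, that non-amenability excludes the hyperfinite and matrix-algebra cases, and that the free dimension formula with $t_i = 1$ then forces $r = d_1 + d_2 - d$ — are exactly the implicit steps the paper relies on.
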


 We will use this result in combination with a special case Theorem 5.8 of \cite{houdayervaes12}.

 \begin{theorem}[See Theorem 5.8 of \cite{houdayervaes12}]
\label{thm:houdayervaes}
   Let $M_1$, $M_2$ be diffuse von Neumann algebras and $A$ a common finite dimensional subalgebra.  If $\cZ(M_1) \cap \cZ(M_2) \cap \cZ(A) = \CC 1$, then $M_1 *_A M_2$ is a non-amenable factor.
 \end{theorem}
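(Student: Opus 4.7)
The plan is to establish factoriality and non-amenability separately, since they require very different arguments.

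For factoriality, I would analyze central elements via the reduced-word decomposition of $M = M_1 *_A M_2$. The trace-preserving conditional expectation $E_A : M \to A$ is well defined, and $\Ltwo(M) \ominus \Ltwo(A)$ decomposes orthogonally into subspaces indexed by reduced alternating words over $M_1 \ominus A$ and $M_2 \ominus A$. For $z \in \cZ(M)$, the commutation relations $[z, M_1] = 0 = [z, M_2]$, combined with the orthogonality of distinct reduced words and the fact that $u(M_1 \ominus A)u^* \subset M_1 \ominus A$ for unitaries $u \in M_1$, force $z$ to have no component on words of positive length. Hence $z = E_A(z) \in A$. Since $z$ also commutes with all of $M_1$ and $M_2$, we obtain $z \in \cZ(M_1) \cap \cZ(M_2) \cap \cZ(A)$, and the hypothesis yields $z \in \CC 1$.

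For non-amenability, the key is to exhibit a non-amenable subalgebra of $M$. My first step would be to produce two unitaries $u \in M_1$ and $v \in M_2$ that are \emph{Haar unitaries relative to $A$}, meaning $E_A(u^n) = 0 = E_A(v^n)$ for all $n \neq 0$. Since $M_1, M_2$ are diffuse and $A$ is finite dimensional, such unitaries exist in each $M_i$; any two such unitaries generate, by the alternating-word structure of the amalgamated free product, a copy of $\rL \freegrp{2}$ inside $M$, yielding non-amenability. A complementary reduction approach would be to pick a minimal projection $p \in \cZ(A)$ and study $pMp$, which decomposes as an amalgamated free product of compressions over $pAp$; the center hypothesis prevents $p$ from being simultaneously central in $M_1$ and $M_2$, so this corner genuinely carries a non-trivial free-product structure rather than collapsing to a single $pM_ip$.

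The main obstacle is the non-amenability step, specifically using the center hypothesis to exclude the degenerate situations in which $M$ would split as a direct sum of amenable components. Factoriality itself is fairly routine once the reduced-word machinery is in place and the center condition drops out almost automatically. The delicate part is arranging the Haar unitaries $u, v$ so that the pair $\{u, v\}$ truly behaves freely: if $A$ were central in both $M_1$ and $M_2$, there would be a common block decomposition making everything commute on each block and trivializing the free product, which is exactly what the hypothesis $\cZ(M_1) \cap \cZ(M_2) \cap \cZ(A) = \CC 1$ rules out. Making this heuristic precise — turning the center condition into an obstruction to a compatible block decomposition and thereby into genuine freeness of the generators of a non-amenable corner — is the technical core of the argument.
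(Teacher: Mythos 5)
The paper does not actually prove this statement; it imports it verbatim as a special case of Theorem 5.8 of Houdayer--Vaes, so there is no internal proof to compare yours against. Judged on its own terms, your outline has the right two ingredients but misplaces the role of the hypothesis and leaves a gap exactly where you locate the ``technical core''. The non-amenability half needs no centre condition at all: once you have unitaries $u \in M_1$, $v \in M_2$ with $\rE_A(u^n)=\rE_A(v^n)=0$ for all $n \neq 0$ (these exist because $A' \cap M_i$ is diffuse whenever $M_i$ is diffuse and $A$ is finite dimensional, so one may take a Haar unitary in each corner of $A' \cap M_i$ cut by the minimal central projections of $A$), every alternating word $u^{n_1}v^{m_1}\cdots$ is an alternating product of $A$-centred elements, so $\rE_A$ of it vanishes by freeness with amalgamation and hence so does its trace. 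That is already the $*$-freeness of $\{u\}''$ and $\{v\}''$ with respect to $\tau$, giving a copy of $\rL\freegrp{2}$ inside $M$. No obstruction to ``common block decompositions'' is needed: in the degenerate example $\Linfty([0,1]) *_{\CC^2} \Linfty([0,1])$, where $A$ is central in both factors and the centre hypothesis fails, the algebra is still non-amenable --- it merely fails to be a factor. So the step you defer as the delicate part is in fact immediate, and the heuristic you propose for closing it is aimed at the wrong target.

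The centre hypothesis is used only for factoriality, and that is also where your sketch contains a genuine error: the claim that $u(M_1 \ominus A)u^* \subset M_1 \ominus A$ for every unitary $u \in M_1$ is false in general, since $\rE_A(uxu^*)$ need not vanish when $\rE_A(x)=0$ unless $u$ normalises $A$. The correct route to $\cZ(M) \subset A$ is either to conjugate by a Haar unitary of $M_1$ relative to $A$ and exploit the resulting asymptotic orthogonality of the word subspaces, or simply to invoke the intertwining statement recorded as Theorem \ref{thm:ipp} in this paper: since $M_i$ is diffuse and $A$ is finite dimensional, $M_i \not\prec_{M_i} A$, hence $M_i' \cap M \subset M_i$ for $i=1,2$, so $\cZ(M) \subset M_1 \cap M_2 = A$, and then $\cZ(M) \subset \cZ(M_1)\cap\cZ(M_2)\cap\cZ(A) = \CC 1$. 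With these two repairs your strategy does yield the theorem.
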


\subsection{Operator valued semicircular random variables}
\label{sec:operator-valued-semicirculars}

Given an inclusion of von Neumann algebras $A \subset M$ with conditional expectation $\rE:M \ra A$, we say that an element $X$ of $M$ is a random variable with $A$-valued distribution
\[
  \phi^{(n)}_{(X,A)}:
  A \times \dotsm \times A \ra A:
  (a_1, \dotsc, a_n) \mapsto \rE(X a_1 X \dotsm a_n X)
  \eqcomma \quad
  n \in \NN
  \eqstop
\]
If $\mathrm{NC}(n)$ denotes the set of all non-crossing partitions on $n$ points, then we can use the framework of \emph{operator-valued multiplicative function} of Speicher \cite[Chapter II]{speicher98} in order to write the operator-valued free cumulants of $X$ as the unique maps $c^{(n)}: A \times \dotsm \times A \ra A$ satisfying
\[
  \phi_{(X,A)}^{(n)}(a_1, \dotsc, a_n)
  =
  \sum_{\pi \in \mathrm{NC}}{
    c^{(\pi)}_{(X,A)}(a_1, \dotsc, a_n)
  }
  \eqcomma
\]
where $c^{(\pi)}_{(X,A)}$ is defined recursively over the block structure of $\pi$.  If $b = \{i, i + 1, \dotsc , i +k\}$ is an interval of $\pi \in NC(n+1)$, then we put
\[
  c^{(\pi)}_{(X,A)}(a_1, \dotsc, a_n)
  = 
  c^{(\pi \setminus b)}_{(X,A)}
    (a_1, \dotsc,
    a_{i-1} c^{(k)}_{(X,A)}(a_i, \dotsc, a_{i + k - 1}) a_{i + k},
    \dotsc , a_n)
  \eqstop
\]

If $\eta:A \ra A$ is a completely positive map, then an $A$-valued random variable $X \in M$ is called $A$-valued semicircular with distribution $\eta$, if $c^{(1)}_{(X,A)}(a) = \eta(a)$ and $c^{(n)}_{(X,A)} = 0$ for all $n \neq 1$.  We will need the following proposition.
\begin{proposition}[See Example 3.3(a) in \cite{shlyakhtenko99}]
  If $A \cong \rL \ZZ$ and $X$ is an $A$-valued semicircular with distribution $\eta = \tau: A \ra \CC \subset A$, then $\Wstar(X, A) \cong \rL \freegrp{2}$ where $u_1 \in A$ is identified with one canonical generator of $\rL \freegrp{2}$.
\end{proposition}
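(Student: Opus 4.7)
The plan is to reduce the claim to the classical identity $\rL\ZZ * \rL\ZZ \cong \rL\freegrp{2}$ by proving that $X$, viewed as a self-adjoint element of the tracial von Neumann algebra $\Wstar(X,A)$, is a standard semicircular element that is $*$-free from $A$. A semicircular element generates a diffuse abelian algebra isomorphic to $\Linfty([-2,2]) \cong \rL\ZZ$, and once freeness from $A$ is established, the identification of $u_1$ as the canonical generator of $A$ yields exactly the desired isomorphism.

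First I would compute the $A$-valued moments $\rE(X a_1 X \cdots a_n X)$ by means of the moment-cumulant formula recalled in Section \ref{sec:operator-valued-semicirculars}. The hypothesis $\eta = \tau \cdot 1_A$ forces the unique non-vanishing $A$-valued cumulant to be $c^{(1)}_{(X,A)}(a) = \tau(a) \cdot 1_A$, so only non-crossing pair partitions survive in the sum over $\mathrm{NC}(n+1)$. A straightforward induction on the number of blocks, using the interval-removal recursion, shows that each $c^{(\pi)}(a_1, \ldots, a_n)$ is a scalar in $\CC \cdot 1_A$, equal to a product of traces of nested sub-products of the $a_i$'s determined by the block structure of $\pi$. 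Therefore all $A$-valued moments of $X$ lie in $\CC \cdot 1_A$, and they coincide with the scalar moments $\tau(Sa_1 S \cdots a_n S)$ of the alternating word in a standard scalar semicircular element $S$ that is $*$-free from $A$ in some ambient tracial von Neumann algebra; this latter expansion is the usual free Wick formula.

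Since the joint $*$-distribution of $(X,A)$ with respect to $\tau \circ \rE$ determines the isomorphism class of the generated von Neumann algebra, one obtains a trace-preserving isomorphism $\Wstar(X,A) \cong \Wstar(S) * A$ extending the identity on $A$ and sending $X$ to $S$. Because $\Wstar(S) \cong \rL\ZZ$, this free product is $\rL\ZZ * \rL\ZZ \cong \rL\freegrp{2}$, and $u_1 \in A$ is mapped by construction to a canonical generator of the second copy of $\rL\ZZ$. The main technical obstacle is the combinatorial identification in the middle step --- matching the cumulant expansion arising from $\eta = \tau \cdot 1_A$ with the scalar Wick formula for a semicircular free from $A$ --- but this is a bookkeeping exercise ultimately reflecting the fact that both formulas are built recursively from the same interval-removal rule.
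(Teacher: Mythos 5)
The paper offers no proof of this proposition at all: it is quoted as Example 3.3(a) of \cite{shlyakhtenko99} and used as a black box (indeed, the statement is exactly the identification of the free Krieger algebra $\Phi(A,\tau)$ with $A * \rL\ZZ$). Your argument is a correct, self-contained proof of the cited fact, and it is essentially the standard one underlying Shlyakhtenko's example. The key point checks out: with $\eta = \tau(\cdot)1_A$ the only non-vanishing cumulant is scalar-valued, so in the interval-removal recursion every evaluation $c^{(k)}_{(X,A)}(a_i,\dotsc,a_{i+k-1})$ that gets inserted between $a_{i-1}$ and $a_{i+k}$ is a scalar multiple of $1_A$; by induction each $c^{(\pi)}_{(X,A)}(a_1,\dotsc,a_n)$ is the product of traces of the products of $a_i$'s sitting in the gaps of the non-crossing pair partition $\pi$ (the Kreweras-complement blocks), which is exactly the free Wick expansion of $\tau(Sa_1S\dotsm a_nS)$ for a standard semicircular $S$ free from $A$. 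Since $\tau_A\circ\rE$ is the trace and the alternating moments $\tau(a_0Xa_1\dotsm Xa_{n+1})$ determine the full joint $*$-distribution of $\{X\}\cup A$, the GNS construction gives a trace-preserving isomorphism $\Wstar(X,A)\cong \Wstar(S)*A\cong \rL\ZZ*\rL\ZZ\cong\rL\freegrp{2}$ restricting to the identity on $A$, which carries $u_1$ to a canonical generating Haar unitary. It is worth noting that the same cumulant manipulation (passing from $A$-valued to $B$-valued semicircularity via $\rE_A\circ\rE_B$) is carried out explicitly in the paper's proof of Theorem \ref{thm:flexibility-for-left-regular}, so your computation is entirely consistent with the techniques the paper does spell out.
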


\subsection{Deformation/Rigidity}
\label{sec:preliminaries:deformation-rigidty}

Let $A \subset M$ be an inclusion of von Neumann algebras.  The normaliser of $A$ in $M$, denoted by $\Norm_M(A)''$, is the von Neumann algebra generated by all unitaries $u \in M$ satisfying $uAu^* = A$.  The quasi-normaliser of $A$ in $M$ is the von Neumann algebra $\QN_M(A)''$ generated by all elements $x \in M$ such that there are $a_1, \dotsc , a_n$ and $b_1, \dotsc, b_m$ satisfying $Nx \subset \sum_i a_iN$ and $xN \subset \sum_i N b_i$.

The following notion was introduced in \cite[Theorem 2.1 and Corollary 2.3]{popa06_2}.  If $M$ is a tracial von Neumann algebra, $A, B \subset M$ are von Neumann subalgebras, we say that \emph{$A$ embeds into $B$ inside $M$} if there is a right finite $A$-$B$-subbimodule of $\Ltwo(M)$.  In this case, we write $A \prec_M B$. If every $A$-$M$-subbimodule of $\Ltwo(M)$ contains a right finite $A$-$B$-subbimodule, then we say that \emph{$A$ fully embeds into $B$ inside $M$} and write $A \prec_M^\rmf B$.

If $A, B \subset (M, \tau)$ is an inclusion of tracial von Neumann algebras, we say that $A$ is \emph{amenable relative to $B$ inside $M$}, if there is an $A$ central state $\varphi$ on the basic construction $\langle M, e_B \rangle$ such that $\varphi|_M = \tau$.  If $A$ is amenable relative to an amenable subalgebra, then it is amenable itself.

We will use the following theorem from \cite{houdayershlyakhtenko11}.  It is proven there for unital von Neumann subalgebras only, but the same proof shows, that it's true for non-unital von Neumann subalgebras.

\begin{theorem}[Theorem 3.5 of \cite{houdayershlyakhtenko11}]
\label{thm:embedding-and-normaliser}
Let $G$ be an amenable group with an orthogonal representation $(\pi, H)$ and write $M = \Gamma(H, G, \pi)''$.  Let $p \in M$ be a non-zero projection and $P \subset pMp$ a von Neumann subalgebra such that $P \not \prec_M \rL G$.  Then $\Norm_{pMp}(P)''$ is amenable.
\end{theorem}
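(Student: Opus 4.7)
The plan is to combine Popa's free malleable deformation of $M$ with an Ozawa--Popa-style weak compactness argument, the amenability of $G$ entering as the crucial input providing relative amenability of $\tilde M$ over $\rL G$.

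First I would set up the malleable deformation. Embed $M = \Gamma(H,G,\pi)''$ into the larger crossed product $\tilde M = \Gamma(H \oplus H, G, \pi \oplus \pi)''$ via the first summand $H \hookrightarrow H \oplus H$. The rotations in $H \oplus H$ extend by functoriality of the free Gaussian construction to a one-parameter group $(\alpha_t)_{t \in \RR}$ of trace-preserving automorphisms of $\tilde M$. These commute with the crossed product structure (hence fix $\rL G \subset \tilde M$ pointwise), and converge to $\id_M$ in the pointwise $\|\cdot\|_2$-topology on $M$ as $t \to 0$. The flip of the two copies of $H$ yields an involution $\beta$ of $\tilde M$ with $\beta|_M = \id$ and $\beta \alpha_t \beta = \alpha_{-t}$, making the deformation s-malleable in Popa's sense.

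Second, I would deduce uniform convergence of $\alpha_t$ to $\id$ on the unit ball of $P$. This is the standard Popa transference: if $\alpha_t \to \id$ did not hold uniformly on $(P)_1$, a compactness argument exploiting s-malleability (cutting $\alpha_{t/2}$ by the spectral projection of $\alpha_t$ on a neighbourhood of $1$) would produce a non-zero $P$-$\rL G$-subbimodule of $\Ltwo(\tilde M)$ of finite right dimension, yielding an embedding of a corner of $P$ into $\rL G$ and contradicting $P \not\prec_M \rL G$. A standard $2\times 2$ matrix trick then propagates the uniform convergence from $(P)_1$ to $(\Norm_{pMp}(P))_1$, and hence to the unit ball of $N := \Norm_{pMp}(P)''$.

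The main step, and also the main obstacle, is to turn uniform convergence of $\alpha_t$ on $N$ into amenability of $N$. For each small $t > 0$, pick a unit vector $\xi_t \in \Ltwo(\tilde M)$ that is approximately $N$-central for the deformed bimodule structure $u \cdot \xi \cdot v = u \xi \alpha_t(v)$, obtained by averaging $\alpha_t(p) \in \tilde M$ relative to $M$. Now amenability of $G$ enters: the $G$-representation on $\Ltwo(\Gamma(H \oplus H)'')$ is a direct sum of tensor powers of $\pi \oplus \pi$, so, by amenability of $G$, is weakly contained in the regular representation, and via the crossed product structure this translates into the statement that $\Ltwo(\tilde M) \ominus \Ltwo(\rL G)$ is weakly contained in the coarse $\rL G$-bimodule. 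Combining this with the approximately $N$-central vectors $\xi_t$, a weak-$*$ limit of the associated states on $\langle M, e_{\rL G}\rangle$ produces an $N$-central state whose restriction to $M$ is the trace. Hence $N$ is amenable relative to $\rL G$ inside $M$, and since $\rL G$ is amenable, $N$ is amenable as required.
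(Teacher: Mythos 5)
Your toolbox is the right one (Popa's free malleable deformation on $\wt{M} = \Gamma(H \oplus H, G, \pi\oplus\pi)''$, the containment of $\Ltwo(\wt{M}) \ominus \Ltwo(M)$ in a multiple of $\Ltwo(M) \ot_{\rL G} \Ltwo(M)$, and amenability of $G$ upgrading this to the coarse bimodule), but your second step reverses Popa's dichotomy, and this is fatal. In the s-malleable framework it is \emph{uniform} convergence of $\alpha_t$ on $(P)_1$ that, via the transversality inequality and the $\beta$-bootstrap up to $t=1$, produces a right-finite $P$-$\rL G$-subbimodule and hence $P \prec_M \rL G$; failure of uniform convergence is the generic situation and yields nothing. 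Concretely, $P = \Gamma(H)'' \subset M$ with $\dim H \geq 2$ satisfies $P \not\prec_M \rL G$ (a corner of a nonamenable free group factor cannot embed into an amplification of the amenable tracial algebra $\rL G$), yet $\alpha_t$ certainly does not converge uniformly on its unit ball --- if it did, the very bootstrap you invoke would force $\Gamma(H)'' \prec_M \rL G$. So the hypothesis $P \not\prec_M \rL G$ gives you no uniform convergence on $(P)_1$, the propagation to the normaliser never gets off the ground, and step 4 collapses as well: the vectors $\xi_t$ you build from $\alpha_t(p)$ converge to $p \in \Ltwo(M)$, and $\Ltwo(M)$ is \emph{not} weakly contained in $\Ltwo(M)\ot_{\rL G}\Ltwo(M)$, so the weak-$*$ limit does not produce a state on $\langle M, e_{\rL G}\rangle$ at all.

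The missing idea is Ozawa--Popa weak compactness, and with it a hypothesis: Houdayer--Shlyakhtenko's Theorem 3.5 assumes that $P$ is amenable (relative to $\rL G$), which the statement as quoted here omits. Without that hypothesis the statement is actually false: take $p=1$ and $P = M$ with $\dim H \geq 2$, so that $P \not\prec_M \rL G$ while $\Norm_M(M)'' = M$ is nonamenable. The genuine proof takes a weakly compact net $\eta_n \in \Ltwo(M)\ot\Ltwo(\ol{M})$ for the conjugation action of $\Norm_{pMp}(P)$ on the amenable algebra $P$, applies $\alpha_t\ot 1$, and runs the dichotomy at the level of these vectors: either the $\Ltwo(M)\ot\Ltwo(\ol{M})$-component of $(\alpha_t\ot 1)\eta_n$ stays uniformly large for all $t\in[0,1]$, in which case the $\beta$-bootstrap yields $P \prec_M \rL G$, contradicting the hypothesis; or for some $t$ the component in $(\Ltwo(\wt{M})\ominus\Ltwo(M))\ot\Ltwo(\ol{M})$ is bounded below, and these are almost $\Norm_{pMp}(P)$-central, trace-correct vectors in a bimodule weakly contained in the coarse one (this is where amenability of $G$ enters, exactly as you describe), which gives amenability of the normaliser. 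Your argument cannot be repaired without restoring the amenability of $P$ and replacing your step 2 by this weak compactness dichotomy.
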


Since we need full embedding of subalgebras in this paper, let us deduce a corollary of the previous theorem.

\begin{corollary}[See Theorem 3.5 of \cite{houdayershlyakhtenko11}]
\label{cor:full-embedding}
  Let $G$ be an amenable group with an orthogonal representation $(\pi, H)$ and write $M = \Gamma(H, G, \pi)''$.  Let $P \subset M$ be a von Neumann subalgebra such that $\Norm_M(P)''$ has no amenable direct summand.  Then $P \prec_M^\rmf \rL G$.
\end{corollary}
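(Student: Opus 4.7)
The plan is to argue by contradiction. Suppose $P \not\prec_M^\rmf \rL G$. Since $P$-$M$-subbimodules of $\Ltwo(M)$ correspond bijectively to projections in $P' \cap M$ (via $q \mapsto q\Ltwo(M)$), there is a non-zero projection $q_0 \in P' \cap M$ such that $Pq_0 \not\prec_M \rL G$. The goal is to upgrade $q_0$ to a projection $z \in P' \cap M$ that commutes with $N := \Norm_M(P)''$ and still satisfies $Pz \not\prec_M \rL G$; Theorem \ref{thm:embedding-and-normaliser} applied to $z$ will then produce a non-zero amenable direct summand of $N$, contradicting the hypothesis.

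To construct $z$, I would use Popa's net characterization: $Pq \not\prec_M \rL G$ holds if and only if there is a net $(v_i)$ of unitaries in $\cU(Pq)$ with $\|E_{\rL G}(x v_i y)\|_2 \to 0$ for all $x, y \in M$. For any $u \in \Norm_M(P)$, conjugation $v_i \mapsto u v_i u^*$ produces unitaries in $\cU(P \cdot u q u^*)$, and the identity
\[
  \|E_{\rL G}(x \cdot u v_i u^* \cdot y)\|_2
  =
  \|E_{\rL G}((xu)\, v_i\, (u^*y))\|_2
  \longrightarrow 0
\]
shows that the family $\cE := \{q \in P' \cap M \text{ projection} : Pq \not\prec_M \rL G\}$ is stable under conjugation by $\Norm_M(P)$. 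Setting $z := \sup_{u \in \Norm_M(P)} u q_0 u^*$ then produces a $\Norm_M(P)$-invariant projection in $P' \cap M$, hence one commuting with $N$. Assembling the witnessing nets on the corners $P u q_0 u^*$ into a single net of unitaries in $\cU(Pz)$ yields $Pz \not\prec_M \rL G$.

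Applying Theorem \ref{thm:embedding-and-normaliser} with projection $z$ and subalgebra $Pz \subset zMz$ gives amenability of $\Norm_{zMz}(Pz)''$. Since $z$ commutes with $N$, for each $u \in \Norm_M(P)$ the element $uz$ lies in $zMz$ and normalizes $Pz$ (because $uz \cdot Pz \cdot zu^* = uPu^* \cdot z = Pz$), so $Nz \subset \Norm_{zMz}(Pz)''$ is amenable. The normal surjective $*$-homomorphism $N \ra Nz$, $x \mapsto xz$, has kernel of the form $N(1-e)$ for a unique central projection $e \in \cZ(N)$, so that $Ne \cong Nz$ is an amenable direct summand of $N$; it is non-zero since $z \neq 0$. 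This contradicts the hypothesis and finishes the argument.

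The step I expect to be most delicate is the supremum step in the middle paragraph: one must combine witnessing nets indexed by the $\Norm_M(P)$-orbit of $q_0$ into a single witnessing net of unitaries in $\cU(Pz)$. Using the directed structure on finite subsets of the orbit together with the compatibility of the compressions by the $u q_0 u^*$ inside $Pz$, this should be routine, but it is the only point in the argument that requires genuine work beyond unwinding definitions and invoking the cited theorem.
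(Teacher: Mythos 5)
Your argument is correct and follows essentially the same route as the paper: the paper passes to the maximal projection $p \in P' \cap M$ with $Pp \not\prec_M \rL G$, notes that it commutes with the normaliser, invokes Lemma 3.5 of \cite{popa06_2} to get $\Norm_{pMp}(Pp)'' \supset p\Norm_M(P)''p$, and then applies Theorem \ref{thm:embedding-and-normaliser} to produce an amenable direct summand, exactly as you do with $z$. For the supremum step you flag as delicate, note that the contrapositive bimodule argument is cleaner than assembling nets from non-orthogonal corners: if $Pz \prec_M \rL G$ via a non-zero right finite subbimodule $\cH \subset z\Ltwo(M)$, then $uq_0u^*\cH \neq 0$ for some $u$ (since $z$ is the supremum of the $uq_0u^*$), and its closure is a right finite $Puq_0u^*$-$\rL G$-subbimodule, contradicting $Puq_0u^* \not\prec_M \rL G$.
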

\begin{proof}
  Take $P \subset M$ as in the statement and let us assume for a contradiction that $P \not \prec_M^\rmf \rL G$.  Let $p \in P' \cap M$ be the maximal projection such that $pP \not \prec_M \rL G$.  Then $p \in \cZ(\Norm_M(P)'')$.  By \cite[Lemma 3.5]{popa06_2}, we have $\Norm_{pMp}(pP)'' \supset p \Norm_M(P)'' p$.  By Theorem \ref{thm:embedding-and-normaliser}, $\Norm_{pMp}(pP)''$ is amenable.  So $\Norm_M(P)''$ has an amenable direct summand.  This is contradiction.
\end{proof}

The next theorem, due to Vaes, allows us to obtain from intertwining bimodules a much better behaved finite index bimodule.

\begin{proposition}[Proposition 3.5 of \cite{vaes09}]
\label{prop:fi-bimodule}
Let $M$ be a tracial von Neumann algebra and suppose that $A,B \subset M$ are von Neumann subalgebras that satisfy the following conditions.
\begin{itemize}
\item $A \prec_M B$ and $B \prec_M^{\rmf} A$.
\item If $\cH \leq \Ltwo(M)$ is an $A$-$A$ bimodule with finite right dimension, then $\cH \leq \Ltwo(\QN_M(A)'')$.
\end{itemize}
Then there is a finite index $A$-$B$-subbimodule of $\Ltwo(M)$.
\end{proposition}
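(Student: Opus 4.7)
The plan is to combine the two intertwining conditions with the quasi-normaliser hypothesis in order to promote the right-finite bimodule produced by $A \prec_M B$ into one that is simultaneously left-finite. First, from $A \prec_M B$ I would fix a non-zero $A$-$B$-subbimodule $\cK \subset \Ltwo(M)$ of finite right $B$-dimension. Its adjoint $\cK^*$ is then a $B$-$A$-subbimodule of $\Ltwo(M)$ with finite left $B$-dimension, but \emph{a priori} no control on its right $A$-dimension.

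Next, I would bring in the full embedding $B \prec_M^\rmf A$ by applying it to the $B$-$M$-subbimodule $\overline{\cK^* \cdot M} \subset \Ltwo(M)$. This yields a non-zero $B$-$A$-subbimodule $\cL \subset \overline{\cK^* M}$ of finite right $A$-dimension. The adjoint $\cL^*$ is already an $A$-$B$-subbimodule of $\Ltwo(M)$ with finite left $A$-dimension, but again with no a priori control over its right $B$-dimension, so one still has to couple $\cK$ and $\cL$ to match the two finiteness properties.

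To exploit the quasi-normaliser hypothesis, I would then form an auxiliary $A$-$A$-subbimodule $\cN \subset \Ltwo(M)$ by taking the closure of the product $\cK \cdot \cL$ inside $\Ltwo(M)$: the finite right $B$-dimension of $\cK$ makes right multiplication by vectors of $\cK$ into bounded operators from $\Ltwo(B)$ to $\Ltwo(M)$, so the product is well-defined, and a dimension count analogous to that for the Connes fusion $\cK \boxtimes_B \cL$ shows that $\cN$ has finite right $A$-dimension. By the second hypothesis of the proposition, this forces $\cN \subset \Ltwo(\QN_M(A)'')$.

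Finally, I would translate this containment into the existence of a finite-index $A$-$B$-subbimodule by exploiting the description of $\QN_M(A)''$ as generated by elements $x \in M$ admitting finite decompositions $Ax \subset \sum_i x_i A$ and $xA \subset \sum_j A y_j$. Such decompositions, combined with the right $B$-finiteness of $\cK$, should let me carve out an $A$-$B$-subbimodule of $\Ltwo(M)$ that is simultaneously left $A$- and right $B$-finite. I expect the main obstacle to be exactly this last step: bridging the quasi-normaliser containment for the auxiliary $A$-$A$-bimodule back to a two-sided finiteness statement for an $A$-$B$-bimodule will require a careful analysis of how the finite decompositions of $\QN_M(A)$-elements interact with the right $B$-action inherited from $\cK$, and in particular extracting the correct "intertwiner" from the composite product $\cK \cdot \cL$.
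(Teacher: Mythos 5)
The paper does not actually prove this statement; it is quoted from \cite{vaes09} (Proposition 3.5), so there is no in-paper argument to compare with. Judged on its own terms, your proposal gets the opening moves right (extracting a right $B$-finite $A$-$B$-subbimodule $\cK$ from $A \prec_M B$, and a right $A$-finite $B$-$A$-subbimodule $\cL \subset \overline{\cK^* M}$ from $B \prec_M^{\rmf} A$), but it has two genuine gaps. The first is the construction of the auxiliary $A$-$A$-bimodule $\cN = \overline{\cK \cdot \cL}$. Finite right $B$-dimension of $\cK$ gives a dense set of right $B$-bounded vectors $\xi$, i.e.\ bounded operators $L_\xi : \Ltwo(B) \ra \Ltwo(M)$ -- but these only allow you to multiply $\xi$ by vectors of $\Ltwo(B)$, whereas the vectors of $\cL$ live in $\Ltwo(M)$ and need not be left $B$-bounded. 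The pointwise product of two vectors of $\Ltwo(M)$ lies in general only in $\Lone(M)$, so ``$\cK \cdot \cL$'' is not a subspace of $\Ltwo(M)$ as written. The standard substitute is to use the orthogonal projection $e_{\cK^*}$ onto $\cK^*$, which commutes with the left $B$- and right $A$-actions because $\cK^*$ is a $B$-$A$-subbimodule: the closure of $e_{\cK^*}(\cL)$ is a $B$-$A$-subbimodule of $\cK^*$, hence has finite left $B$-dimension, and is bimodularly a quotient of $\cL$, hence has finite right $A$-dimension, so its adjoint is already the desired finite index $A$-$B$-subbimodule -- \emph{provided it is non-zero}.

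Second, and this is the heart of the matter, you never close the loop. The entire difficulty of the proposition is to exclude the possibility that every right $B$-finite $A$-$B$-subbimodule of $\Ltwo(M)$ is orthogonal to every left $A$-finite one (equivalently, that $e_{\cK^*}(\cL) = 0$ for every admissible choice); the second hypothesis, that right $A$-finite $A$-$A$-subbimodules of $\Ltwo(M)$ lie in $\Ltwo(\QN_M(A)'')$, is precisely the tool that rules this orthogonality out. Your sketch invokes that hypothesis only to place the (not yet well-defined) bimodule $\cN$ inside $\Ltwo(\QN_M(A)'')$, and then explicitly defers the derivation of the conclusion to ``a careful analysis'' that is not carried out. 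As it stands, the argument never produces a subbimodule of $\Ltwo(M)$ that is simultaneously left $A$- and right $B$-finite, so the proof is incomplete exactly at the step where the quasi-normaliser hypothesis has to do its work.
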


\subsubsection{Deformation/Rigidity for amalgamated free products}
\label{sec:deformation-rigidity-afps}

We will make use of the following results, which control relative commutants in amalgamated free products.
\begin{theorem}[See Theorem 1.1 of \cite{ioanapetersonpopa08}]
\label{thm:ipp}
Let $M = M_1 *_A M_2$ be an amalgamated free product of tracial von Neumann algebras and $p \in M_1$ a non-zero projection. If $Q \subset pM_1p$ is a von Neumann subalgebra such that $Q \not \prec_{M_1} A$, then $Q' \cap pMp = Q' \cap pM_1 p$.  
\end{theorem}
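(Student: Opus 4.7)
The strategy is a deformation/rigidity argument using the free malleable deformation of an amalgamated free product, in the form employed by Ioana--Peterson--Popa. First one enlarges $M = M_1 *_A M_2$ to $\wt M = M_1 *_A \wt M_2$, where $\wt M_2 := M_2 *_A (A \vnt \rL \ZZ)$ contains a self-adjoint generator $h$ (coming from the $\rL \ZZ$ factor) such that $u := \exp(\rmi h)$ is a Haar unitary commuting with $A$ and free from $M_2$ over $A$. The one-parameter family $\alpha_t := \id_{M_1} *_A \Ad(\exp(\rmi t h))$ defines a trace-preserving group of automorphisms of $\wt M$ fixing $M_1$ pointwise, and the involution $\beta$ that sends $h$ to $-h$ turns $(\alpha_t,\beta)$ into an $s$-malleable deformation in Popa's sense. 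The standard consequence is the transversality inequality
\[\|\alpha_t(y) - \rE_M(\alpha_t(y))\|_2 \leq \|\alpha_{2t}(y) - y\|_2 \text{ for all } y \in M, t \in \RR \eqstop\]

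The key step, which I expect to be the main technical obstacle, is Popa's spectral-gap dichotomy: either $\sup_{u \in \cU(Q)} \|\alpha_t(u) - u\|_2 \to 0$ as $t \to 0$, or else $Q \intertwines[M_1] A$. To establish it, one performs a reduced-word analysis in $M_1 *_A \wt M_2$, viewing $\Ltwo(\wt M) \ominus \Ltwo(M)$ as a direct sum of Hilbert $A$-bimodules indexed by the length of reduced words and applying the bimodule characterisation of Popa's intertwining symbol $\prec$ to a putative $Q$-central almost-$\alpha_t$-invariant vector extracted from an obstruction to uniform convergence. Since by hypothesis $Q \not\intertwines[M_1] A$, the second alternative is ruled out, so we obtain uniform convergence of $\alpha_t$ to $\id$ on the unit ball of $Q$.

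Given $x \in Q' \cap pMp$, the commutator identity $[\alpha_t(x),u] = \alpha_t(x)(u - \alpha_t(u)) + (\alpha_t(u) - u)\alpha_t(x)$ shows that $\alpha_t(x)$ is asymptotically $Q$-central in $\|\cdot\|_2$. Combining this with the transversality inequality and a routine iteration argument, one concludes $\alpha_t(x) = x$ for every $t$, so $x$ lies in the fixed algebra $M^{(\alpha_t)_t}$. A direct reduced-word computation identifies this fixed algebra with $M_1$: writing $y \in M$ as a sum of reduced alternating words in $M_1 \ominus A$ and $M_2 \ominus A$, the automorphism $\alpha_t$ conjugates each $M_2 \ominus A$-letter by $\exp(\rmi t h)$ while leaving the $M_1 \ominus A$-letters fixed, and the result equals the original word for every $t$ only if each $M_2 \ominus A$-letter lies in the centraliser of $h$, i.e.\ in $A$, which forces $y \in M_1$. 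Hence $x \in pM_1 p$, which together with the trivial reverse inclusion yields $Q' \cap pMp = Q' \cap pM_1 p$.
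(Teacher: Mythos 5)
The paper does not prove this statement itself; it quotes it verbatim from Ioana--Peterson--Popa, whose actual argument is not a deformation argument at all: from $Q \not\prec_{M_1} A$ one extracts a net of unitaries $u_n \in \cU(Q)$ with $\|\rE_A(a u_n b)\|_2 \ra 0$ for all $a, b \in M_1$, decomposes $\Ltwo(M) \ominus \Ltwo(M_1)$ into the alternating-word $M_1$-$M_1$-bimodules $\Ltwo(M_1) \ot_A (\Ltwo(M_2)\ominus\Ltwo(A)) \ot_A \dotsm \ot_A \Ltwo(M_1)$, shows directly that $\langle u_n \xi u_n^*, \eta\rangle \ra 0$ for vectors $\xi,\eta$ in this complement, and applies this to $\xi = \eta = x - \rE_{M_1}(x)$, which is fixed by $\Ad(u_n)$ for $x \in Q' \cap pMp$, to get $x = \rE_{M_1}(x)$.

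Your proposal has a genuine gap at its central step. In your construction $\alpha_t = \id_{M_1} *_A \Ad(\exp(\rmi t h))$ fixes $M_1$ pointwise, and $Q \subset pM_1p$, so $\sup_{u \in \cU(Q)}\|\alpha_t(u)-u\|_2 = 0$ identically: your ``spectral-gap dichotomy'' is vacuous, its first alternative holds trivially, and the hypothesis $Q \not\prec_{M_1} A$ is never actually used anywhere in your argument. Consequently the commutator identity only tells you that $\alpha_t(x)$ again commutes with $Q$, which is far from $\alpha_t(x) = x$; the appeal to ``transversality and a routine iteration'' cannot close this, because transversality bounds $\|\alpha_t(y) - \rE_M(\alpha_t(y))\|_2$ by $\|\alpha_{2t}(y)-y\|_2$, and the latter is exactly the quantity you have no control over for $y$ in the relative commutant. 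That this step cannot be repaired without reintroducing the hypothesis is shown by a concrete example: take $A$ diffuse, $Q = A \subset M_1$ and $M_2 = A \vnt \rL\ZZ$; then every step of your argument up to and including ``$\alpha_t(x)$ commutes with $Q$'' goes through, yet $x = 1 \ot u \in \rL\ZZ \subset Q' \cap M$ satisfies $\alpha_t(x) = e^{\rmi t h} x e^{-\rmi t h} \neq x$ and $x \notin M_1$ (of course here $Q \prec_{M_1} A$, so the theorem does not apply --- but your proof never invokes that). The missing idea is precisely that a non-zero $Q$-central vector in $\Ltwo(M) \ominus \Ltwo(M_1)$ forces $Q \prec_{M_1} A$ because that complement is, as an $M_1$-bimodule, built from $\Ltwo(M_1) \ot_A \Ltwo(M_1)$; this is where the hypothesis must enter, and it is the content of the word-by-word computation sketched above rather than of any malleability property of $\alpha_t$.
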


\begin{theorem}[See Theorem 6.3 in \cite{ioana12}]
\label{thm:ioana-control-relative-commutant}
  Let $M = M_1 *_A M_2$ be an amalgamated free product of tracial von Neumann algebras and $p \in M$.  Let $Q \subset pMp$ an arbitrary von Neumann subalgebra and $\omega$ a non-principal ultrafilter.  Denote by $B$ the von Neumann algebra generated by $A^\omega$ and $M$.  One of the following statements is true.
  \begin{itemize}
  \item $Q' \cap (pMp)^\omega \subset B$ and $Q' \cap (pMp)^\omega \prec_{M^\omega} A^\omega$,
  \item $\Norm_{pMp}(Q)'' \prec M_i$, for some $i \in \{1,2\}$ or
  \item $Qe$ is amenable relative to $A$ for some non-zero projection $e \in \cZ(Q' \cap pMp)$.
  \end{itemize}
\end{theorem}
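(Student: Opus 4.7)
The plan is to prove this via Ioana's free malleable deformation for the amalgamated free product $M = M_1 *_A M_2$, combined with a spectral gap/transversality analysis carried out inside the ultrapower $M^\omega$. First I would realise $M$ inside a larger tracial amalgamated free product $\widetilde{M}$ carrying a trace-preserving one-parameter automorphism group $(\theta_t)_{t \in \RR}$ that fixes $A$ pointwise and satisfies the transversality inequality
\[
\|\theta_{2t}(x) - \expect_{M}(\theta_{2t}(x))\|_2 \geq c\,\|\theta_t(x) - x\|_2
\]
for all $x \in M$ and $t$ in a neighbourhood of $0$. The standard construction uses $\widetilde{M} = (M_1 *_A (A \vnt \rL \freegrp{\infty})) *_A M_2$, with $(\theta_t)$ induced by a unitary one-parameter group on the free-product side that is trivial on $A$. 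This deformation extends to an automorphism group of $\widetilde{M}^\omega$ and commutes with $\expect_{M^\omega}$ up to the transversality estimate above.

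The core argument is a dichotomy applied to $Q \subset pMp$. Suppose first that $\theta_t$ converges uniformly in $\| \cdot \|_2$ on the unit ball of $Q$ as $t \to 0$. Then for any $y \in Q' \cap (pMp)^\omega$, the element $\theta_t(y) - y$ is asymptotically orthogonal to $Q$, and a standard Popa-type averaging with the basic construction $\langle \widetilde{M}, e_M \rangle$ places $y$ inside the algebra $B$ generated by $A^\omega$ and $M$ inside $\widetilde{M}^\omega$. Using that $\widetilde{M}$ is free over $A$ with a copy of $A \vnt \rL \freegrp{\infty}$ on one side, one then checks that any such $y$ actually intertwines into $A^\omega$ inside $M^\omega$, yielding the first alternative. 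If instead uniform convergence fails, one produces a sequence $(x_n)$ in $\mathcal U(Q)$ and $t_n \to 0$ with $\|\theta_{t_n}(x_n) - x_n\|_2$ bounded below. Transversality produces a nonzero element $\xi \in \Ltwo(\widetilde{M}) \ominus \Ltwo(M)$ that is asymptotically central for the normaliser $\Norm_{pMp}(Q)''$, and a classical free-product Popa-style argument on $\Ltwo(\widetilde{M}) \ominus \Ltwo(M)$ — which as an $M$-$M$-bimodule decomposes as a sum of bimodules of the form $\Ltwo(M) \ot_A \Ltwo(M) \ot_A \dotsm \ot_A \Ltwo(M)$ starting and ending with $M_1$ or $M_2$ — forces $\Norm_{pMp}(Q)'' \prec M_i$ for some $i$, i.e.\ the second alternative.

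The remaining case is when the deformation converges uniformly on $Q$ but the resulting element in $Q' \cap (pMp)^\omega$ fails to intertwine $A^\omega$ on some central cutdown. Here one exploits the failure of intertwining to produce, via Popa's criterion, a sequence of unitaries $(u_n) \subset Q e$ for some $e \in \cZ(Q' \cap pMp)$ whose $A$-central moments decay. Combined with the $A$-trace-preserving conditional expectation $\widetilde{M} \ra M$ and the transversality, this produces an $Qe$-central state on the basic construction $\langle \widetilde{M}, e_M \rangle$ restricting to the trace on $M$, and then restriction along the natural embedding $\langle M, e_A \rangle \hra \langle \widetilde{M}, e_M \rangle$ exhibits $Qe$ as amenable relative to $A$, giving the third alternative.

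The main obstacle I expect is the final step of extracting relative amenability when the first alternative fails only on a summand, and simultaneously controlling which case occurs on which central projection of $Q' \cap pMp$. The argument requires a careful patching — taking the maximal projection $e$ on which no intertwining occurs, showing $(1-e)$ falls in the first case, and verifying $e \in \cZ(Q' \cap pMp)$ — together with the delicate choice of ultrafilter limits so that the spectral gap estimate survives passage to $M^\omega$. A secondary technical difficulty is verifying that the element produced in the first alternative actually lies in $B = \Wstar(A^\omega, M)$ rather than merely in $\widetilde{M}^\omega$; this relies on the specific structure of the deformation $(\theta_t)$ and on the freeness of the auxiliary $\rL \freegrp{\infty}$ factor with $M$ over $A$.
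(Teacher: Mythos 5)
This statement is not proved in the paper at all: it is quoted verbatim from Ioana's article (Theorem 6.3 there), so there is no in-paper argument to measure your proposal against. Your outline does assemble the right toolkit --- the Ioana--Peterson--Popa free malleable deformation of $M = M_1 *_A M_2$, Popa's transversality, a spectral gap analysis in the ultrapower, and intertwining-by-bimodules --- and these are indeed the ingredients of Ioana's proof.

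However, the way you organise the dichotomy has a genuine gap. You derive the first alternative from the hypothesis that $\theta_t \ra \id$ uniformly on $(Q)_1$, claiming this makes $\theta_t(y) - y$ asymptotically orthogonal to $Q$ for $y \in Q' \cap (pMp)^\omega$ and hence places $y$ in $B$. That implication fails: uniform convergence of the deformation on the unit ball of $Q$ says nothing about how $\theta_t$ moves the relative commutant. Taking $Q = \CC p$ makes your hypothesis trivially true, while $Q' \cap (pMp)^\omega = (pMp)^\omega$ is in general neither contained in $\Wstar(M, A^\omega)$ nor intertwined into $A^\omega$; the theorem survives for this $Q$ only because the \emph{third} alternative holds. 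The actual engine for the first alternative is Popa's spectral gap argument, whose input is the negation of the third alternative: if no corner of $Q$ is amenable relative to $A$, then the $M$-$M$-bimodule $\Ltwo(\widetilde{M}) \ominus \Ltwo(M)$, being contained in a multiple of $\Ltwo \langle M, e_A \rangle$, has spectral gap with respect to $Q$, and it is this gap that forces $Q$-almost-central elements to be close to $\Ltwo(M)$ and ultimately lands $Q' \cap (pMp)^\omega$ inside $\Wstar(M, A^\omega)$ with $Q' \cap (pMp)^\omega \prec_{M^\omega} A^\omega$ (using in addition that the second alternative fails). Your attempt to recover relative amenability of a corner $Qe$ from the mere failure of $Q' \cap (pMp)^\omega \prec_{M^\omega} A^\omega$ reverses this logical flow and has no mechanism behind it: failure of intertwining produces unitaries with decaying $A$-valued coefficients, not a $Qe$-central state on $\langle M, e_A \rangle$. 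Similarly, in your second case it is uniform convergence of the deformation, fed into an IPP-type argument, that yields $\Norm_{pMp}(Q)'' \prec M_i$ --- not its failure. The case analysis therefore needs to be restructured around the trichotomy ``some corner of $Q$ is amenable relative to $A$ / $\Norm_{pMp}(Q)'' \prec M_i$ / neither'', with the first alternative established under the assumption that both of the other two fail.
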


Also, we will need one result on relative commutants in ultrapowers.
\begin{lemma}[See Lemma 2.7 in \cite{ioana12}]
\label{lem:ioana-relative-commutant-in-ultrapower}
  Let $M$ be a tracial von Neumann algebra, $p \in M$ a non-zero projection, $P \subset pMp$ and $\omega$ a non-principal ultrafilter.  There is a decomposition $p = e + f$, where $e, f \in \cZ(P' \cap (pMp)^\omega) \cap \cZ(P' \cap pMp)$ are projections such that
  \begin{itemize}
  \item $e(P' \cap (pMp)^\omega) = e(P' \cap pMp)$ and this algebra is completely atomic and
  \item $f(P' \cap (pMp)^\omega)$ is diffuse.
  \end{itemize}
\end{lemma}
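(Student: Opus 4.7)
I would write $Q := P' \cap (pMp)^\omega$ and $Q_0 := P' \cap pMp$, so that $Q_0 \subset Q$, and begin with the identification
\[
\cZ(Q) \cap \cZ(Q_0) \;=\; Q_0 \cap Q',
\]
an abelian von Neumann subalgebra of $pMp$ in which the desired projections $e$ and $f$ must be found. My candidate for $e$ is the central support in $Q$ of the canonical completely atomic summand $Q_{\mathrm{at}}$ of $Q$. Concretely, I would decompose $Q = Q_{\mathrm{at}} \oplus Q_{\mathrm{diff}}$, where $Q_{\mathrm{at}} \cong \bigoplus_i M_{n_i}(\CC)$ (each $n_i < \infty$) is the maximal direct summand generated by minimal projections and $Q_{\mathrm{diff}}$ is diffuse, then set $e$ equal to the identity of $Q_{\mathrm{at}}$ and $f := p - e$. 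By construction $eQ$ is completely atomic and $fQ$ is diffuse.

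What remains is to verify (i) $e \in Q_0 \cap Q'$ and (ii) $eQ = eQ_0$. I would reduce both to the statement that the finite-dimensional matrix-algebra summands of $Q$ sit inside $Q_0$: indeed, $Q_{\mathrm{at}}$ is the weak closure of the linear span of matrix units of its $M_{n_i}(\CC)$-summands, so if these matrix units lie in $Q_0$, then $Q_{\mathrm{at}} \subset Q_0$, hence $e \in Q_0$ (being a sum of minimal central projections of $Q_{\mathrm{at}}$) and $eQ = Q_{\mathrm{at}} = eQ_0$.

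The main obstacle will be the lifting step: showing that every minimal projection $q \in Q$, and more generally every matrix unit in its enclosing $M_n(\CC)$-summand, lies in $pMp$ and not merely in $(pMp)^\omega$. The ingredient that should make this work is the rigidity afforded by minimality, $qQq = \CC q$, equivalently $(Pq)' \cap q(pMp)^\omega q = \CC q$. After lifting $q = (q_n)_\omega$ to projections in $pMp$ asymptotically commuting with $P$ in $\|\cdot\|_2$, the triviality of this asymptotic relative commutant should preclude any non-scalar perturbation from persisting inside $Q$, so an ultrapower diagonalization should yield an honest projection $q' \in Q_0$ equal to $q$ in $(pMp)^\omega$. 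Applied to a full system of matrix units one obtains an honest copy of $M_n(\CC)$ inside $Q_0$, and assembling these over all minimal central projections of $Q_{\mathrm{at}}$ produces the required $e \in Q_0 \cap Q'$ with $eQ = eQ_0$, completing the proof.
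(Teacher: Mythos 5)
A preliminary remark: the paper does not prove this lemma at all --- it is quoted verbatim from Lemma 2.7 of \cite{ioana12} --- so there is no in-text proof to compare yours against; I can only measure your outline against the argument of the cited result.

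Your reduction is the right one and is carried out correctly: with $Q = P' \cap (pMp)^\omega$ and $Q_0 = P' \cap pMp$, taking $e$ to be the unit of the completely atomic summand $Q_{\mathrm{at}} \cong \bigoplus_i M_{n_i}(\CC)$ and $f = p - e$, everything reduces to showing that every minimal projection of $Q$ lies in $pMp$ (then $Q_{\mathrm{at}} \subset Q_0$ since $M_{n_i}(\CC)$ is generated by its minimal projections, so $e \in Q_0 \cap Q'$, $eQ = Q_{\mathrm{at}} = eQ_0$, and $fQ$ is diffuse by construction). The gap is that this key step --- which is the entire mathematical content of the lemma --- is not proved. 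The sentence ``the triviality of this asymptotic relative commutant should preclude any non-scalar perturbation from persisting'' is not an argument, and it points at the wrong mechanism: minimality of $q$ by itself only yields $q\,\rE_{pMp}(q)\,q = \lambda q$ with $\lambda = \|\rE_{pMp}(q)\|_2^2/\tau(q)$, which is perfectly consistent with $\lambda < 1$, i.e.\ with $q \notin pMp$; no single-element computation rules this out. What actually closes the argument is the following. Write $q = (q_n)_\omega$ with $q_n \in pMp$ projections of trace $\tau(q)$ and set $a = \rE_{pMp}(q)$. By re-indexing the representing sequence (a diagonal argument, which incidentally requires separability) one produces projections $q^{(1)}, q^{(2)}, \dotsc \in Q$ of trace $\tau(q)$ with $\tau(q^{(i)} q^{(j)}) = \|a\|_2^2$ for all $i \neq j$. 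Minimality of $q$ gives $q q^{(i)} q = \lambda q$, and the polar decomposition of $q^{(i)} q$ then shows each $q^{(i)}$ is equivalent to $q$ inside $Q$, hence is itself a minimal projection with the same central support $z(q)$; since $z(q)$ is a minimal central projection, $Q z(q) \cong M_n(\CC)$ with $n < \infty$. If $\lambda < 1$, the $q^{(i)}$ form an infinite family in the unit ball of this finite-dimensional algebra with $\|q^{(i)} - q^{(j)}\|_2^2 = 2\tau(q)(1 - \lambda)$ bounded below, which is impossible. Hence $\lambda = 1$, so $\|a\|_2 = \|q\|_2$ and $q = a \in pMp$. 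Without the independent second copy of $q$ and the finite-dimensionality of $Q z(q)$, your outline does not go through.
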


A tracial inclusion $B \subset M$ of von Neumann algebras is called \emph{mixing} if for all sequences $(x_n)_n$ in the unit ball $(B)_1$ that go to $0$ weakly and for all $y,z \in M \ominus B$, we have
\[\|\rE_B(yx_nz)\|_2 \ra 0 \text{ if } n \ra \infty \eqstop\]
If a subalgebra is mixing, we can control the normaliser of algebras embedding into it.
\begin{lemma}[See Lemma 9.4 in \cite{ioana12}]
\label{lem:ioana-control-normalizer-mixing}
  Let $B \subset M$ be a mixing inclusion of tracial von Neumann algebras.  Let $p \in M$ be a projection and $Q \subset pMp$.  If $Q \prec_M B$, then $\Norm_M(Q)'' \prec_M B$.
\end{lemma}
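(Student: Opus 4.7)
The plan is to combine Popa's characterization of intertwining with a rigidity property of mixing inclusions. By the hypothesis $Q \prec_M B$, there exists a non-zero $Q$-$B$-subbimodule $\cK \subset \Ltwo(M)$ with finite right $B$-dimension. For each $u \in \Norm_M(Q)$, the translate $u\cK$ is again a $Q$-$B$-subbimodule of $\Ltwo(M)$ of the same right $B$-dimension: left multiplication by the unitary $u$ is right $B$-linear, and $uQ = Qu$ ensures the left $Q$-action is preserved up to the automorphism $\Ad u$ of $Q$. Therefore
\[ \cL \;=\; \cspan\bigl(\textstyle\bigcup_{u \in \Norm_M(Q)} u\cK\bigr) \subset \Ltwo(M) \]
is automatically an $\Norm_M(Q)''$-$B$-subbimodule, and it suffices to prove $\dim_{-B} \cL < \infty$; Popa's theorem then yields $\Norm_M(Q)'' \prec_M B$.

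The core of the argument is the following consequence of mixing, which I would establish first: \emph{every $B$-$B$-subbimodule $\cH \subset \Ltwo(M)$ with finite right $B$-dimension is contained in $\Ltwo(B)$.} For this, pick a Pimsner-Popa basis $\eta_1, \dotsc, \eta_n$ of $\cH$ on the right, so that every $\xi \in \cH$ expands as $\xi = \sum_j \eta_j \rE_B(\eta_j^* \xi)$; assuming for contradiction that some $\xi$ has a non-zero component in $\Ltwo(M) \ominus \Ltwo(B)$, feed a weakly null bounded sequence $(b_k) \subset B$ into the expansion of $b_k \xi$ and invoke the mixing condition $\|\rE_B(y b_k z)\|_2 \to 0$ for $y, z \in \Ltwo(M) \ominus \Ltwo(B)$ to contradict a uniform lower bound on $\|b_k \xi\|_2$ obtained from suitable test vectors.

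With this rigidity lemma in hand, the remaining step is to bound the right $B$-dimension of $\cL$. For $u, v \in \Norm_M(Q)$, the $B$-$B$-subbimodule $\cspan(\cK^* u^* v \cK) \subset \Ltwo(M)$ has finite right $B$-dimension bounded in terms of $\dim_{-B} \cK$, so by the rigidity lemma it sits inside $\Ltwo(B)$. This tightly constrains the relative position of $u\cK$ and $v\cK$ and, via a standard dimension-counting pigeonhole argument, bounds the number of mutually essentially orthogonal translates $u\cK$ contributing to $\cL$. One thereby obtains $\dim_{-B} \cL \leq C \cdot \dim_{-B} \cK$ for some finite constant $C$.

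The main obstacle is the rigidity lemma itself: extracting the contradiction from the Pimsner-Popa expansion via mixing requires choosing $(b_k)$ and the vector $\xi$ compatibly so that mixing actually forces every term $\rE_B(\eta_j^* b_k \xi)$ to vanish in $\|\cdot\|_2$ while a uniform lower bound on $\|b_k\xi\|_2$ is preserved. Once this is in place, the dimension bookkeeping for $\cL$ reduces to routine manipulations of finite right $B$-dimensional bimodules, and the conclusion $\Norm_M(Q)'' \prec_M B$ follows.
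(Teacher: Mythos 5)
The paper itself gives no proof of this lemma (it is imported verbatim from Ioana's work), so I can only judge your argument on its own terms. Your step establishing that every right-finite $B$-$B$-subbimodule of $\Ltwo(M)$ lies in $\Ltwo(B)$ is a correct and standard consequence of mixing, and reducing the problem to producing a right-finite $\Norm_{pMp}(Q)''$-$B$-subbimodule is the right framework. The genuine gap is your claim that $\cspan(\cK^* u^* v \cK)$ is a $B$-$B$-subbimodule of finite right $B$-dimension bounded in terms of $\dim_{-B}\cK$. Finiteness of $\dim_{-B}\cK$ gives a finite right Pimsner--Popa basis $\eta_1,\dotsc,\eta_n$, so $\cspan(\cK^*u^*v\cK)\subset\sum_j\cspan(\cK^*u^*v\eta_j B)$; but for a single vector $\zeta$ the set $\cK^*\zeta$ ranges over the image of the infinite-dimensional space $\cK$, and the closed right $B$-module it generates contains bimodules of the form $\cspan(B\xi^*\zeta B)$, which is typically coarse (for instance in a free product $M=B*N$ with $\xi^*\zeta$ having a component in $N\ominus\CC 1$) and hence has infinite right $B$-dimension. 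So your rigidity lemma cannot be applied to it, and the subsequent ``pigeonhole'' bound on $\dim_{-B}\cL$ --- which you do not actually carry out --- collapses.

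There is also a structural reason the argument cannot work as written: you never use any nontriviality of $Q$, yet the statement is false for $Q=\CC p$. Indeed $\CC p\prec_M B$ always holds (via the right-finite module $\overline{pB}$), while $\Norm_{pMp}(\CC p)''=pMp$ need not embed into $B$ (take $M=B*\rL\ZZ$, a mixing inclusion); in that example your $\cL$ is all of $\Ltwo(M)$ and $\cK^*u^*v\cK=\overline{Bu^*vB}$ is exactly a coarse bimodule. The lemma thus tacitly requires $Q$ to be diffuse (as it is in every application in this paper), and diffuseness must enter the proof. The standard route is: from $Q\prec_M B$ produce a partial isometry $v$ and a unital $*$-homomorphism $\phi:Q\ra rM_n(B)r$ with $xv=v\phi(x)$; since $Q$ is diffuse, $\phi(Q)\subset M_n(B)$ is diffuse; for $u\in\Norm_{pMp}(Q)$ one checks $\phi(x)(v^*uv)=(v^*uv)\phi(u^*xu)$, so $\overline{v^*uv\,M_n(B)}$ is a cyclic, hence right-finite, right module invariant under left multiplication by the diffuse subalgebra $\phi(Q)$ of $M_n(B)$; the $D$-$B$ version of your rigidity step (with $D\subset B$ diffuse, proved identically using a weakly null sequence of unitaries in $D$) forces $v^*uv\in M_n(B)$, whence $v^*\Norm_{pMp}(Q)''v\subset M_n(B)$ and $\Norm_{pMp}(Q)''\prec_M B$. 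It is the conjugates $v^*uv$ of the intertwiner, not the translates $u\cK$ of the bimodule, that remain in a fixed right-finite module; $\cspan(\bigcup_u u\cK)$ can easily have infinite right $B$-dimension.
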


Finally, we will use two theorems on intertwining in amalgamated free products from the work of Ioana \cite{ioana12}.  This theorem is stated in \cite{ioana12} for unital inclusions into amalgamated free products, but it remains valid in the more general case.
\begin{theorem}[See Theorem 1.6 in \cite{ioana12}]
\label{thm:ioana-main}
Let $M = M_1 *_A M_2$ be an amalgamated free product of tracial von Neumann algebras, $p \in M$ a projection and $Q \subset pMp$ an amenable von Neumann subalgebra.  Denote by $P = \Norm_{pMp}(Q)''$ the normaliser of $Q$ inside $pMp$ and assume that $P' \cap (pMp)^\omega = \CC p$ for some non-principal ultrafilter $\omega$.  Then, one of the following holds.
\begin{itemize}
\item $Q \prec A$,
\item $P \prec M_i$, for some $i \in \{1,2\}$ or
\item $P$ is amenable relative to $A$.
\end{itemize}
\end{theorem}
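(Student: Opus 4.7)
The statement is cited from \cite{ioana12}, so the paper will almost certainly invoke it as a black box. To sketch how one would prove it in the spirit of Ioana's original argument, the plan is to apply the s-malleable deformation of the amalgamated free product $M = M_1 *_A M_2$. One embeds $M$ into a larger AFP $\tilde M = \tilde M_1 *_A \tilde M_2$ by freely adjoining an auxiliary amenable algebra over $A$ to each side, obtaining a one-parameter family of trace-preserving automorphisms $(\alpha_t)_{t \in \RR}$ of $\tilde M$ that fix $A$ pointwise, together with a period-two symmetry $\beta$ satisfying $\beta \alpha_t \beta = \alpha_{-t}$ and $\beta|_M = \id$.

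The core step is to dichotomise according to whether $\alpha_t \to \id$ uniformly on the unit ball $(Q)_1$ as $t \to 0$. In the case of uniform convergence on $(Q)_1$, the spectral-gap hypothesis $P' \cap (pMp)^\omega = \CC p$ automatically upgrades this to uniform convergence on $(P)_1$, because any asymptotically central sequence in $(pMp)^\omega$ witnessing non-convergence on $P$ would have to commute asymptotically with $Q$ and therefore, by hypothesis, collapse to $\CC p$. Popa's transversality lemma combined with the symmetry $\beta$ then produces an intertwiner of $P$ into some $\tilde M_i$, and Ioana's AFP-intertwining lemmas descend this to $P \prec_M M_i$, giving alternative (ii). If uniform convergence on $(Q)_1$ fails, a witnessing sequence produces, via an ultrapower/Connes-tube argument, a non-trivial $Q$-central vector inside a Hilbert bimodule weakly contained in the $A$-$A$-bimodule $\Ltwo(\langle M, e_A \rangle)$. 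Amenability of $Q$ together with Connes' tensor-product construction then yields either a right finite $Q$-$A$-subbimodule of $\Ltwo(M)$, giving alternative (i) $Q \prec_M A$, or a $P$-central state on $\langle M, e_A \rangle$ restricting to the trace on $M$, which is exactly alternative (iii).

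The main technical obstacle is this last trichotomy: promoting $Q$-centrality of the limit vectors to $P$-centrality, and simultaneously arguing that any obstruction to this promotion must arise from a right finite $Q$-$A$-subbimodule witnessing $Q \prec_M A$. This requires a delicate analysis of how the normaliser $P$ acts on the bimodule of limit vectors, and is the place where Ioana's argument genuinely improves on the earlier Ioana-Peterson-Popa AFP framework from \cite{ioanapetersonpopa08}. The construction of the malleable deformation, the spectral-gap upgrade from $Q$ to $P$, and the application of Popa's transversality trick are all standard deformation/rigidity ingredients once the setup is in place.
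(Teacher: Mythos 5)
You are right that the paper offers no proof of this statement: it is imported verbatim from Theorem 1.6 of \cite{ioana12}, with only the remark that the argument there, written for unital inclusions, goes through for non-unital ones. Your sketch of the underlying deformation/rigidity argument (the s-malleable deformation of the amalgamated free product, the spectral-gap upgrade from $Q$ to $P$ using $P' \cap (pMp)^\omega = \CC p$, Popa's transversality, and the final trichotomy) is a faithful outline of Ioana's actual proof, so there is nothing to compare against within this paper itself.
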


\begin{theorem}[See Theorem 9.5 in \cite{ioana12}]
\label{thm:approximate-commutators-and-mixing-subaglebras}
Let $B \subset M$ be a mixing inclusion of von Neumann algebras.  Take a non-principal ultrafilter $\omega$, a projection $p \in M$ and let $P \subset pMp$ be a von Neumann subalgebra such that $P' \cap (pMp)^\omega$ is diffuse and $P' \cap (pMp)^\omega \prec_{M^\omega} B^\omega$.  Then $P \prec_M B$.  
\end{theorem}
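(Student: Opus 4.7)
The plan is to argue by contraposition: I suppose $P \not\prec_M B$ and show that no diffuse von Neumann subalgebra of $P' \cap (pMp)^\omega$ can intertwine into $B^\omega$ inside $M^\omega$. By Popa's intertwining-by-bimodules criterion, the negation $P \not\prec_M B$ produces a sequence of unitaries $u_n \in \cU(P)$ with $\|\rE_B(x u_n y)\|_2 \to 0$ for every $x, y \in M$. Regarding $u = (u_n)_n$ as an element of $P^\omega \subset (pMp)^\omega$, one concludes $\rE_{B^\omega}(x u y) = 0$ for every $x, y \in M$; by Kaplansky density this extends to all pairs $x, y$ in a suitable $\|\cdot\|_2$-dense $*$-subalgebra of $M^\omega$ consisting of elements represented by uniformly bounded sequences in $M$.

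Assume for contradiction that $P' \cap (pMp)^\omega$ is diffuse and that $P' \cap (pMp)^\omega \prec_{M^\omega} B^\omega$. Popa's quantitative intertwining criterion then yields finitely many elements $y_1, \dotsc, y_N \in M^\omega$ and a constant $\delta > 0$ such that
\[
  \sum_{i,j=1}^N \|\rE_{B^\omega}(y_i^* v y_j)\|_2^2 \geq \delta
  \qquad \text{for every unitary } v \in \cU(P' \cap (pMp)^\omega).
\]
Using the diffuseness of $P' \cap (pMp)^\omega$ together with a standard reindexing over $\omega$, I pick unitaries $v^{(k)}$ in this relative commutant represented by sequences $v^{(k)}_n \in \cU(pMp)$ which asymptotically commute with $P$ and which, after extraction, tend weakly to zero in $M$ in a diagonal sense along $\omega$.

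The key step is then to exploit the mixing of the inclusion $B \subset M$ to force a contradiction. Writing $v^{(k)} = u^* v^{(k)} u$ using the commutation of $u \in P^\omega$ with $v^{(k)} \in P' \cap (pMp)^\omega$, and decomposing each $y_i$ with respect to the orthogonal splitting $M^\omega = B^\omega \oplus (M \ominus B)^\omega$, I reduce the lower bound above to a sum of terms of the form $\|\rE_{B^\omega}(a\, u\, b)\|_2^2$ with $a, b$ represented by bounded sequences in $M \ominus B$, modulated by factors coming from the $B^\omega$-valued partial isometry witnessing $P' \cap (pMp)^\omega \prec_{M^\omega} B^\omega$. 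Mixing of $B \subset M$ applied along those representing sequences, combined with $\|\rE_B(x u_n y)\|_2 \to 0$, makes the reduced expression vanish along $\omega$, contradicting the uniform lower bound $\delta$ and completing the proof.

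The main obstacle is precisely this reduction step. The mixing hypothesis is only stated for sequences in $(B)_1$ and fixed elements $y, z \in M \ominus B$, whereas the quantities appearing in the intertwining bound involve a unitary $v^{(k)}$ in $P' \cap (pMp)^\omega$ and arbitrary elements of $M^\omega$. Overcoming this requires a careful diagonal extraction over $\omega$ producing genuine $B$-valued sequences converging weakly to zero, built from the intertwining partial isometry and the almost-commutation between representatives of $u$ and $v^{(k)}$. Once this extraction is performed, the mixing condition closes the argument with a direct computation.
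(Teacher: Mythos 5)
The paper does not prove this statement (it is quoted verbatim from Ioana's work), so I can only assess your argument on its own terms, and it has two concrete flaws. First, the extension in your opening step fails: from $\|\rE_B(xu_ny)\|_2 \to 0$ for fixed $x,y \in M$ you may conclude $\rE_{B^\omega}(xuy)=0$ for $x,y \in M$, but this does \emph{not} propagate to elements of $M^\omega$. The algebra $M$ is not $\|\cdot\|_2$-dense in $M^\omega$ (the ultrapower is non-separable), and the statement is genuinely false for general $x,y\in M^\omega$: taking $x=u^*$ and $y=p$ gives $\rE_{B^\omega}(u^*up)=\rE_{B^\omega}(p)\neq 0$. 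This is precisely the reason $P\not\prec_M B$ does not formally imply $P^\omega\not\prec_{M^\omega}B^\omega$; the gap between the two is where the mixing hypothesis has to do its work, and your sketch postpones exactly that work. Second, and more fatally, your ``key step'' rests on the identity $v^{(k)}=u^*v^{(k)}u$ for $u=(u_n)_n\in P^\omega$ and $v^{(k)}\in P'\cap(pMp)^\omega$. This commutation is false: membership of $v=(v_n)_n$ in $P'\cap(pMp)^\omega$ only says $\lim_\omega\|[x,v_n]\|_2=0$ for each \emph{fixed} $x\in P$, and a diagonal sequence $(u_n)_n$ with $u_n\in\cU(P)$ need not asymptotically commute with $(v_n)_n$. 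In general $P'\cap(pMp)^\omega$ is strictly larger than $(P^\omega)'\cap(pMp)^\omega$; for instance with $P=pMp=R$ hyperfinite, $P'\cap(pMp)^\omega=R'\cap R^\omega$ is a II$_1$ factor inside $R^\omega=P^\omega$ and certainly does not commute with it. Since the entire reduction to the mixing computation is routed through this identity, the argument does not close.

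For orientation: the workable route runs forward rather than by contraposition. From diffuseness of $Q=P'\cap(pMp)^\omega$ one picks unitaries $v_k\in\cU(Q)$ tending weakly to $0$ (powers of a Haar unitary); the hypothesis $Q\prec_{M^\omega}B^\omega$ gives a uniform lower bound $\sum_i\|\rE_{B^\omega}(a_iv_kb_i)\|_2^2\geq c$ over all $k$; a diagonal extraction over $\omega$ then produces honest unitaries $V_n\in pMp$ that tend weakly to $0$, satisfy $\|[x,V_n]\|_2\to 0$ for every fixed $x\in P$ (this is the only commutation actually available), and keep the $\rE_B$-correlations bounded below. Mixing of $B\subset M$ is then used to convert this asymptotic commutation with $P$, together with the fact that the $V_n$ stay ``close to $B$'' in the intertwining sense, into $P\prec_M B$. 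Your sketch gestures at such a diagonal extraction in the last paragraph but, as written, both the vanishing claim you feed into it and the commutation you build it on are incorrect.
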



\section{General structure of $\Gamma(H, \ZZ, \pi)''$}
\label{sec:general}

Recall that we write $M_\pi$ for $\Gamma(H, \ZZ, \pi)''$.  The decomposition of orthogonal representations into almost periodic and weakly mixing part, also gives rise to a decomposition of their free Bogoljubov crossed products.

\begin{remark}
\label{rem:decomposition-wm-ap}
Let $(\pi, H)$ be an orthogonal representation of a discrete group $G$.  Then
\[\Gamma(H)'' \cong \Gamma(H_{\mathrm{ap}})'' * \Gamma(H_{\mathrm{wm}})''\]
and so we get a decomposition
\[
  M_\pi
  =
  \Gamma(H)'' \rtimes G
  \cong
  (\Gamma(H_{\mathrm{ap}})'' \rtimes G) *_{\rL G} (\Gamma(H_{\mathrm{wm}})'' \rtimes G)
  \eqstop
\]
More generally, if $\pi = \bigoplus_i \pi_i$, then $M_\pi \cong *_{\rL G, i} M_{\pi_i}$.
\end{remark}

\subsection{$\Gamma(H, \ZZ, \pi)''$ for almost periodic representations}
\label{sec:almost-periodic}

If not mentioned explicitly, $\pi$ denotes an almost periodic orthogonal representation of $\ZZ$ in this section.  Recall that an irreducible almost periodic orthogonal representation of $\ZZ$ has dimension $1$ if and only if its eigenvalue is $1$ or $-1$.  In all other cases, it has dimension $2$ and its complexification has a pair of conjugate eigenvalues $\lambda, \ol{\lambda} \in \rS^1 \setminus \{1, -1\}$.

\begin{notation}
  We denote by $\rL \ZZ \rtimes_\lambda \ZZ$, $\lambda \in \rS^1$ the crossed product by the action of $\ZZ$ on $\rL \ZZ$ where $1 \in \ZZ$ acts by multiplying the canonical generator of $\rL \ZZ$ with $\lambda$.  This is isomorphic to the crossed products $\Linfty (\rS^1) \rtimes_\lambda \ZZ$ and $\ZZ \ltimes_\lambda \Linfty(\rS^1)$, where $\ZZ$ acts on $\rS^1$ by rotation by $\lambda$.  Moreover, $1 \ot \rL \ZZ$ is carried onto $1 \ot \rL \ZZ$ and $1 \ot \Linfty(\rS^1)$, respectively, under this isomorphism.
\end{notation}

\begin{theorem}
\label{thm:afp-decomposition-ap}
  Let $\pi$ be an almost periodic orthogonal representation of $\ZZ$.  Let $\lambda_i, \ol{\lambda_i}$, $0 \leq i < n_1 \in \NN \cup \{\infty\}$ be an enumeration of all eigenvalues in $\rS^1 \setminus \{1, -1\}$ of the complexification of $\pi$.  Denote by $n_2$ and $m_0$ the multiplicity of $-1$ and $1$, respectively, as an eigenvalues of $\pi$.  Note that $\dim \pi = 2n_1 + n_2 + m_0$ and write $n = n_1 + n_2$, $m = n_1 + m_0$.  Then
\begin{align*}
  M_\pi
  & \cong
  (\rL \freegrp{m} \vnt \rL \ZZ) *_{1 \ot \rL \ZZ} (\rL \freegrp{n} \rtimes_\alpha \ZZ) \\
  & \cong 
  (\rL \freegrp{m} \vnt \Linfty(\rS^1)) *_{1 \ot \Linfty(\rS^1)} (\freegrp{n} \ltimes_\beta \Linfty(\rS^1))
 \eqcomma
\end{align*}
where, denoting by $g_i$, $0 \leq i < n_1$, and $h_i$, $0 \leq i < n_2$, the canonical basis of $\freegrp{n_1 + n_2} \cong \freegrp{n}$
\begin{itemize}
\item $\alpha(1)$ acts on $u_{g_i}$ by multiplication with $\lambda_i$ for $0 \leq i < n_1$,
\item $\alpha(1)$ acts on $u_{h_i}$ by multiplication with $-1$ for $0 \leq i < n_2$,
\item $\beta(g_i)$ acts on $\rS^1$ by multiplication with $\lambda_i$ for $0 \leq i < n_1$,
\item $\beta(h_i)$ acts on $\rS^1$ by multiplication with $-1$ for $0 \leq i < n_2$.
\end{itemize}
 Moreover, $\Gamma(H_\pi)'' \cong \rL(\freegrp{m + n})$ under this identification and $A_\pi$ is carried onto $\rL \ZZ$ and $\Linfty(\rS^1)$, respectively.

\end{theorem}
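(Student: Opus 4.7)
The plan is to decompose $\pi$ into its irreducible real subrepresentations and iterate Remark \ref{rem:decomposition-wm-ap}, which writes $M_\pi$ as an amalgamated free product over $A_\pi = \rL \ZZ$ of the crossed products $M_{\pi_\alpha}$ associated with each irreducible component $\pi_\alpha$. These components fall into three types: the trivial character (contributing $m_0$ copies), the sign character with eigenvalue $-1$ (contributing $n_2$ copies), and, for $0 \leq i < n_1$, a two-dimensional irreducible $\pi_{\lambda_i}$ whose complexification has eigenvalue pair $\{\lambda_i, \ol{\lambda_i}\} \subset \rS^1 \setminus \{1, -1\}$. So the theorem reduces to identifying each $M_{\pi_\alpha}$ and then reassembling them over $A_\pi$.

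For the trivial character, $M_{\pi_{\mathrm{triv}}} = \Gamma(\RR)'' \vnt \rL \ZZ$, and since $\Gamma(\RR)'' \cong \Linfty([-2,2])$ is diffuse abelian, this is $\rL \freegrp{1} \vnt A_\pi$. For the sign character, $M_{\pi_{\mathrm{sign}}} = \Linfty([-2,2]) \rtimes \ZZ$ with generator acting by $s \mapsto -s$; identifying $\Linfty([-2,2])$ with $\rL \ZZ$ so that $s \mapsto -s$ corresponds to $u \mapsto -u$ (possible since both are free $\ZZ/2\ZZ$-actions on a diffuse abelian tracial von Neumann algebra, hence conjugate) yields $\rL \ZZ \rtimes_{-1} A_\pi$. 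For a two-dimensional $\pi_\lambda$ on $\RR \xi \oplus \RR \eta$ chosen so that $\pi_\CC(1)(\xi + i\eta) = \lambda(\xi + i\eta)$, the element $c = s(\xi) + i s(\eta) \in \Gamma(\RR^2)''$ is circular; its polar decomposition $c = au$ yields a quarter-circular $a$ and a Haar unitary $u$ that are $*$-free, so $\Gamma(\RR^2)'' = \Wstar(a) * \Wstar(u) \cong \Linfty([0,2]) * \rL \ZZ$. A direct computation gives $1 \cdot c = \lambda c$, and uniqueness of the polar decomposition then forces $1 \cdot a = a$ and $1 \cdot u = \lambda u$. Since both free factors are $\ZZ$-invariant, the crossed product splits as
\[
  M_{\pi_\lambda}
  \cong
  (\Linfty([0,2]) \vnt A_\pi) *_{A_\pi} (\rL \ZZ \rtimes_\lambda A_\pi)
  \eqstop
\]

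To reassemble, we invoke associativity of amalgamated free products, the identity $(B \vnt C_1) *_B (B \vnt C_2) \cong B \vnt (C_1 * C_2)$, and the fact that the free product of $k$ diffuse abelian tracial von Neumann algebras is $\rL \freegrp{k}$. The $m_0 + n_1 = m$ summands of the form $(\text{diffuse abelian}) \vnt A_\pi$ amalgamate to $\rL \freegrp{m} \vnt A_\pi$. The $n_1 + n_2 = n$ summands of the form $\rL \ZZ \rtimes_\mu A_\pi$, where $\mu$ ranges through the $\lambda_i$ and through $n_2$ copies of $-1$, amalgamate over $A_\pi$ into $\rL \freegrp{n} \rtimes_\alpha A_\pi$ with the prescribed diagonal action $\alpha$, using the folklore identification that the amalgamated free product over $A_\pi$ of crossed products $N_i \rtimes A_\pi$ equals $(*_i N_i) \rtimes A_\pi$ for the diagonal action. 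Combining these two parts over $A_\pi$ yields the first isomorphism; the second follows by applying the Fourier transform $\rL \ZZ \cong \Linfty(\rS^1)$ to $A_\pi$, under which $u \mapsto \mu u$ becomes rotation by $\mu$ on $\rS^1$. Finally, $\Gamma(H_\pi)'' \cong *_\alpha \Gamma(H_{\pi_\alpha})'' \cong (\rL \freegrp{1})^{*(m_0 + n_2)} * (\rL \freegrp{2})^{*n_1} \cong \rL \freegrp{m + n}$, sitting inside the amalgamated free product with $A_\pi$ in the positions prescribed by the statement. The main technical point is the polar decomposition in the two-dimensional case, together with the verification that both free factors of $\Gamma(\RR^2)''$ are individually $\ZZ$-invariant, which is what makes the crossed product split as an amalgamated free product over $A_\pi$; the rest is bookkeeping with amalgamated free products.
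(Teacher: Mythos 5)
Your proposal is correct and follows essentially the same route as the paper: reduce to irreducible components via Remark \ref{rem:decomposition-wm-ap}, handle the two-dimensional case by taking the polar decomposition of the circular element $c=s(\xi)+is(\eta)$ with $\alpha_\pi(1)c=\lambda c$ to get an invariant quarter-circular part and a Haar unitary transforming by $\lambda$, and then reassemble by the standard bookkeeping with amalgamated free products over $A_\pi$. The only difference is that you spell out the conjugacy of the two essentially free $\ZZ/2\ZZ$-actions in the sign-character case, which the paper simply asserts.
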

\begin{proof}
If $\pi$ is the trivial representation, then $M_\pi \cong \rL \freegrp{\dim \pi} \vnt \rL \ZZ$.  If $\pi$ is the one dimensional representation with eigenvalue $-1$, then 
\[(A_\pi \subset M_\pi) \cong (1 \ot \rL \ZZ \subset \rL \ZZ \rtimes_{-1} \ZZ) \eqstop\]
Let $\pi$ be an irreducible two dimensional representation of $\ZZ$ with eigenvalues $\lambda, \ol{\lambda} \in \rS^1$ of its complexification.  We show that 
\[M_\pi \cong (\rL \ZZ \vnt \rL \ZZ) *_{1 \ot \rL \ZZ} (\rL \ZZ \rtimes_\lambda \ZZ)\]
where the inclusion $1 \ot \rL \ZZ \subset (\rL \ZZ \vnt \rL \ZZ) *_{1 \ot \rL \ZZ} (\rL \ZZ \rtimes_\lambda \ZZ)$ is identified with $A_\pi \subset M_\pi$ under this isomorphism. Indeed, let $\xi, \eta \in H$ be orthogonal such that $\xi + i\eta$ is an eigenvector with eigenvalue $\lambda$ for the complexification of $\pi$.  Write $c = s(\xi) + is(\eta)$.  Then $c$ is a circular element in $M_\pi$ such that $\alpha_\pi(1)c = \lambda c$. Let $c = ua$ be the polar decomposition.  As explained in Section \ref{sec:preliminaries:representations}, $u$ is a Haar unitary and $a$ has quarter-circular distribution and they are $*$-free from each other.  Moreover, $\alpha_\pi(1)a = a$ and thus $\alpha_\pi(1)u = \lambda u$, by uniqueness of the polar decomposition.  So the von Neumann algebra generated by $a$, $u$ and $\rL \ZZ$ is isomorphic to $(\rL \ZZ \vnt \rL \ZZ) *_{1 \ot \rL \ZZ} (\rL \ZZ \rtimes \rL \ZZ)$ and $A_\pi$ is identified with the subalgebra $1 \ot \rL \ZZ$.  This gives the first isomorphism in the statement of the theorem.  Since $\rL \ZZ \rtimes_\lambda \ZZ \cong \ZZ \ltimes_\lambda \Linfty(\rS^1)$ sending $1 \ot \rL \ZZ$ onto $1 \ot \Linfty(\rS^1)$ via the Fourier transform, we also obtain the second isomorphism in the statement of the theorem.

The case of a general almost periodic orthogonal representation $\pi$ follows by considering its decomposition into irreducible components as in Remark \ref{rem:decomposition-wm-ap}.  Indeed, denote by
\[
  \pi
  =
  \bigoplus_{0 \leq i < n_1} \pi_{i, c} \oplus
  \bigoplus_{0 \leq i < n_2} \pi_{i, -1} \oplus
  \bigoplus_{0 \leq i < m_0} \pi_{i,1}
\]
the decomposition of $\pi$ into irreducible components.  Here $\pi_{i,c}$ has dimension $2$ with eigenvalues $\lambda_i, \ol{\lambda_i}$ of $(\pi_{i, c})_\CC$ and $\pi_{i,-1}$ has eigenvalue $-1$ and $\pi_{i,1}$ is the trivial representation. Then
\begin{align*}
  M_\pi
  & \cong
  (\Asterisk_{0 \leq i < n_1} M_{\pi_{i,c}}) *_{A_\pi}   (\Asterisk_{0 \leq i < n_2, A} M_{\pi_{i,-1}}) *_{A_\pi}   (\Asterisk_{0 \leq i < m_0, A} M_{\pi_{i,c}}) \\
  & \cong
  \left ( \Asterisk_{0 \leq i < n_1, 1 \ot \Linfty(\rS^1)} (\rL \ZZ \ot \Linfty(\rS^1)) *_{1 \ot \Linfty(\rS^1)} (\ZZ \ltimes_{\lambda_i} \Linfty(\rS^1)) \right ) \\
    & \qquad *_{1 \ot \Linfty(\rS^1)}
    \left ( \Asterisk_{0 \leq i < n_2, 1 \ot \Linfty(\rS^1)}(\ZZ \ltimes_{-1} \Linfty(\rS^1)) \right ) \\
    & \qquad *_{1 \ot \Linfty(\rS^1)}
    \left ( \Asterisk_{0 \leq i < m_0, 1 \ot \Linfty(\rS^1)}(\rL \ZZ \ot \Linfty(\rS^1)) \right ) \\
  & \cong
  (\rL \freegrp{n_1 + m_0} \vnt \Linfty(\rS^1)) *_{1 \ot \Linfty(\rS^1)} (\freegrp{n_1 + n_2} \ltimes_\beta \Linfty(\rS^1)) \\
  & =
  (\rL \freegrp{m} \vnt \Linfty(\rS^1)) *_{1 \ot \Linfty(\rS^1)} (\freegrp{n} \ltimes_\beta \Linfty(\rS^1))
\end{align*}
and this isomorphism carries $A_\pi = \rL \ZZ$ onto $\Linfty(\rS^1)$.
\end{proof}

\begin{corollary}
\label{cor:normal-ap}
  $A_\pi$ is regular inside $M_\pi$.
\end{corollary}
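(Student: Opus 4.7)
The plan is to read off the normaliser from the amalgamated free product decomposition of $M_\pi$ given in Theorem \ref{thm:afp-decomposition-ap}. Writing
\[
  M_\pi \cong (\rL \freegrp{m} \vnt \rL \ZZ) *_{1 \ot \rL \ZZ} (\rL \freegrp{n} \rtimes_\alpha \ZZ)
  \eqcomma
\]
with $A_\pi$ identified with $1 \ot \rL \ZZ$, it suffices to produce, inside each of the two factors, a family of unitaries that normalise $A_\pi$ and together generate that factor; since the two factors generate the whole amalgamated free product, this will give $\Norm_{M_\pi}(A_\pi)'' = M_\pi$.

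For the first factor $\rL \freegrp{m} \vnt \rL \ZZ$ the situation is trivial: the $m$ canonical generators of $\rL \freegrp{m}$ (together with $\rL \ZZ$ itself) generate the tensor product, and every element of $\rL \freegrp{m} \ot 1$ commutes with, hence normalises, $1 \ot \rL \ZZ$. For the second factor $\rL \freegrp{n} \rtimes_\alpha \ZZ$, let $v$ denote the canonical unitary implementing $\alpha(1)$ (so that $\rL \ZZ = \Wstar(v) = A_\pi$) and let $u_{g_i}$, $u_{h_i}$ be the canonical Haar unitary generators of $\rL \freegrp{n}$. The defining relations of the crossed product give $v u_{g_i} v^* = \lambda_i u_{g_i}$ and $v u_{h_i} v^* = -u_{h_i}$, which rearrange to
\[
  u_{g_i} v u_{g_i}^* = \lambda_i^{-1} v \in \rL \ZZ
  \eqcomma \qquad
  u_{h_i} v u_{h_i}^* = -v \in \rL \ZZ
  \eqstop
\]
Since $\rL \ZZ$ is generated as a von Neumann algebra by $v$, conjugation by $u_{g_i}$ and $u_{h_i}$ preserves $\rL \ZZ$, so all $u_{g_i}, u_{h_i}$ normalise $A_\pi$. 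Together with $v \in A_\pi$ itself they generate $\rL \freegrp{n} \rtimes_\alpha \ZZ$.

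Combining both steps, $\Norm_{M_\pi}(A_\pi)''$ contains both $\rL \freegrp{m} \vnt \rL \ZZ$ and $\rL \freegrp{n} \rtimes_\alpha \ZZ$, and hence equals $M_\pi$. There is no serious obstacle here: once the amalgamated free product description of Theorem \ref{thm:afp-decomposition-ap} is in hand, regularity of $A_\pi$ is essentially visible on the generators. The only small point to verify carefully is the action of $v$ on $u_{g_i}, u_{h_i}$, which is dictated by the definition of $\alpha$ in the statement of that theorem.
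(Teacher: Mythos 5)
Your proof is correct and follows essentially the same route as the paper: both read regularity directly off the amalgamated free product decomposition of Theorem \ref{thm:afp-decomposition-ap}, the paper simply asserting it as "immediate" in the group-measure-space picture $(\rL \freegrp{m} \vnt \Linfty(\rS^1)) *_{1 \ot \Linfty(\rS^1)} (\freegrp{n} \ltimes_\beta \Linfty(\rS^1))$, while you spell out the normalising unitaries in the equivalent crossed-product picture $\rL\freegrp{n}\rtimes_\alpha\ZZ$. Your verification that $u_{g_i} v u_{g_i}^* = \lambda_i^{-1}v$ and $u_{h_i} v u_{h_i}^* = -v$ generate the same von Neumann algebra $\Wstar(v)=A_\pi$ is exactly the detail the paper leaves implicit, so there is no gap.
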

\begin{proof}
By Theorem \ref{thm:afp-decomposition-ap}, we know that
\[
  M_\pi
  \cong
  (\rL \freegrp{m} \vnt \Linfty(\rS^1)) *_{1 \ot \Linfty(\rS^1)} (\freegrp{n} \ltimes_\beta \Linfty(\rS^1))
  \eqcomma
\]
and $A_\pi$ is sent onto $1 \ot \Linfty (\rS^1)$ under this isomorphism.  It follows immediately that $A_\pi \subset M_\pi$ is regular.
\end{proof}

Note that in Theorem \ref{thm:afp-decomposition-ap} the action of $\freegrp{m}$ on $\rS^1$ is not free.

\begin{proposition}
\label{prop:relative-commutant-ap}
 Adopting the notation of Theorem \ref{thm:afp-decomposition-ap}, the relative commutant of $\Linfty(S^1)$ in $(\rL \freegrp{m} \ot \Linfty(\rS^1)) *_{1 \ot \Linfty(\rS^1)} (\freegrp{n} \ltimes \Linfty(\rS^1))$ is $\rL G \vnt \Linfty(\rS^1)$, where $G = \freegrp{m} * \ker \pi$ and $\pi:\freegrp{n} \ra \rS^1$ sends a generator $g_i$ to $\lambda_i$ and $h_i$ to $-1$.
\end{proposition}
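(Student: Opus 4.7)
The strategy is to exploit the amalgamated free product structure $M = M_1 *_B M_2$ with $M_1 = \rL\freegrp{m} \vnt B$, $M_2 = \freegrp{n} \ltimes B$, and $B = \Linfty(\rS^1)$. The easy inclusion $\rL G \vnt B \subseteq B' \cap M$ comes from identifying two natural subalgebras of $M$ that commute with $B$. The tensor factor $\rL\freegrp{m} \subset M_1$ commutes with $B$ by construction, and inside $M_2$ the relation $u_h \beta u_h^{-1} = \pi(h)\cdot\beta$ is trivial for $h \in \ker\pi$, so $\rL(\ker\pi) \subset M_2$ also commutes with $B$. The canonical unitary generators of both subalgebras lie in $M_i \ominus B$, so by amalgamated freeness of $M_1$ and $M_2$ over $B$ they are $*$-free in $(M,\tau)$. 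Together they generate a copy of $\rL(\freegrp{m} * \ker\pi) = \rL G$ inside $M$; since $\rL G$ commutes with $B$ and meets it only in scalars, the algebra they jointly produce is $\rL G \vnt B$.

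For the reverse inclusion I would use the standard $L^2$-decomposition
\[
L^2(M) = L^2(B) \oplus \bigoplus_{w \neq e} u_w L^2(B),
\]
indexed by nontrivial reduced alternating words $w$ in $\freegrp{m}$ and $\freegrp{n}$, where $u_w$ denotes the corresponding product of unitaries in $M$. Pushing $\beta \in B$ across such a word block using $\beta u_g = u_g \beta$ for $g \in \freegrp{m}$ and $\beta u_h = u_h(\pi(h)^{-1}\cdot\beta)$ for $h \in \freegrp{n}$, one obtains
\[
\beta \cdot u_w f = u_w (\tilde\pi(w)^{-1}\cdot\beta)\, f, \qquad u_w f \cdot \beta = u_w (f\beta),
\]
where $\tilde\pi: \freegrp{m} * \freegrp{n} \to \rS^1$ is the unique extension of $\pi$ that is trivial on $\freegrp{m}$. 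A vector $u_w f$ is therefore $B$-central iff $(\tilde\pi(w)^{-1}\cdot\beta - \beta)f = 0$ for every $\beta \in B$; taking $\beta(t) = t$ shows this forces $f = 0$ whenever $\tilde\pi(w) \neq 1$. Hence $L^2(B' \cap M)$ is supported on the word blocks with $\tilde\pi(w) = 1$.

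The remaining step, and the main obstacle, is to match this word-algebra with $\rL G \vnt B$. What the preceding computation produces literally is the algebra generated by the reduced-word unitaries $u_w$ with $w \in \ker\tilde\pi$ together with $B$; to close the proof one must recognise that every such element, once one slides the accumulated $B$-coefficient coming from moving past the $\freegrp{n}$-letters, already lies in the algebra built from $\freegrp{m}$ and $\ker\pi$ in the first paragraph. I expect this normal-form analysis to be the delicate point, since a reduced word in $\freegrp{m}*\freegrp{n}$ can have $\tilde\pi$-image $1$ without each of its $\freegrp{n}$-letters lying in $\ker\pi$, so one must carefully absorb the resulting cocycle into $B$ when rewriting the word in terms of the generators of $G$.
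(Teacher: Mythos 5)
Your route is genuinely different from the paper's. The paper stays in the crossed product picture $M_\pi = \Gamma(H_\pi)'' \rtimes_\alpha \ZZ$ with $\Gamma(H_\pi)'' \cong \rL \freegrp{m+n}$: Fourier decomposition over the copy of $\ZZ$ reduces everything to computing the fixed points of $\alpha$ in $\rL \freegrp{m+n}$, and since $\alpha(1) u_w = \widetilde{\pi}(w) u_w$ (with $\widetilde{\pi}$ your extension of $\pi$ killing $\freegrp{m}$) these are spanned by the $u_w$ with $\widetilde{\pi}(w) = 1$. Your amalgamated free product word decomposition arrives at exactly the same place: both computations are correct and both identify the relative commutant as $\ol{\lspan}\{u_w \beta \amid \widetilde{\pi}(w) = 1, \ \beta \in \Linfty(\rS^1)\}$, which is a von Neumann algebra because $u_w u_{w'} = u_{ww'}$ and $\ker \widetilde{\pi}$ is a subgroup; it equals $\rL(\ker \widetilde{\pi}) \vnt \Linfty(\rS^1)$.

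The ``remaining step'' you flag is not a defect of your argument but of the statement as literally written, and your suspicion is exactly right: it cannot be closed. The subgroup of $\freegrp{m+n}$ generated by $\freegrp{m}$ and $\ker \pi$ is properly contained in $\ker \widetilde{\pi}$ as soon as $m \geq 1$ and $\pi$ is non-trivial. Every element of $\langle \freegrp{m}, \ker \pi \rangle$ has all of its $\freegrp{n}$-syllables (in the normal form of $\freegrp{m} * \freegrp{n}$) lying in $\ker \pi$, since reduction only merges adjacent syllables from the same factor; but $g_1 a g_1^{-1}$ with $a \in \freegrp{m}$ and $\pi(g_1) = \lambda_1 \neq 1$ lies in $\ker \widetilde{\pi}$ and violates this. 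Correspondingly $u_{g_1} u_a u_{g_1}^*$ commutes with $\Linfty(\rS^1)$ yet is orthogonal to $\rL \langle \freegrp{m}, \ker \pi \rangle \vnt \Linfty(\rS^1)$, because every alternating word in $\{u_g\}_{g \in \freegrp{m} \setminus \{e\}}$ and $\{u_h\}_{h \in \ker\pi \setminus \{e\}}$ multiplied against $u_{g_1} u_a u_{g_1}^*$ has vanishing conditional expectation onto $\Linfty(\rS^1)$. So the proposition must be read with $G = \ker \widetilde{\pi}$; this is how $G$ is in fact used in Proposition \ref{prop:cocycle-crossed-product}, where for a faithful two-dimensional $\pi$ the basis $\{g_1^k g_i g_1^{-k}\}$ describes the normal closure, and the paper's own proof likewise only establishes the $\ker \widetilde{\pi}$ version. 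Since $\ker \widetilde{\pi}$ is free by Nielsen--Schreier, everything used downstream (e.g.\ Corollary \ref{cor:normaliser-quasi-normaliser}) survives. In short: stop at the end of your second paragraph and record $\rL(\ker \widetilde{\pi}) \vnt \Linfty(\rS^1)$ as the answer; do not attempt the rewriting in terms of $\freegrp{m} * \ker \pi$.
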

\begin{proof}
  It is clear that the algebra generated by the elements $u_g$ with $g \in G$ is part of the relative commutant of $\Linfty(\rS^1)$ in $M_\pi$, so we have to prove the other inclusion.  Let $x \in \Linfty(\rS^1)' \cap M_\pi$ and write $x = \sum_{k \in Z} x_k u_k$ the Fourier decomposition with respect to the action of $\ZZ$ on $\Gamma(H_\pi)''$.  Then $x_k \in \rL \ZZ' \cap M_\pi$, so we can assume that $x \in \Gamma(H_\pi)'' \cong \rL (\freegrp{m + n})$. Write $x = \sum_{g \in \freegrp{m + n}}a_g u_g$ with $a_g \in \CC$. Since for all $g$ the action of $\alpha(1)$ leaves $\CC u_g$ invariant, $x$ is fixed by $\alpha$ if and only if it has only coefficients in $G$.  This proves the lemma.
\end{proof}

\begin{corollary}
\label{cor:factorial-ap}
  The von Neumann algebra $M_\pi$ is factorial if and only if $\pi$ is faithful.
\end{corollary}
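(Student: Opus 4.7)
The plan is to view $M_\pi = \Gamma(H_\pi)'' \rtimes_\alpha \ZZ$ as a crossed product of a factor by the free Bogoljubov action $\alpha$, so that $M_\pi$ is a factor precisely when $\alpha$ is outer (a standard fact about crossed products of factors by discrete groups). Since $\alpha_k = \id_{\Gamma(H_\pi)''}$ forces $\pi(k) = \id_{H_\pi}$ via the generators $s(\xi)$, the corollary reduces to the equivalence: $\pi$ is faithful if and only if $\alpha_k$ is outer for every $k \neq 0$.

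The ``only if'' direction is immediate: if $\pi$ is not faithful then $\alpha_k = \id$ for some nonzero $k \in \ker \pi$, so the canonical unitary $u_k \in A_\pi \subset M_\pi$ is a non-scalar element of $\cZ(M_\pi)$ and $M_\pi$ is not a factor.

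For the ``if'' direction, assume $\pi$ is faithful and suppose for contradiction that $\alpha_k = \Ad(u)$ for some $k \neq 0$ and unitary $u \in \Gamma(H_\pi)''$. First treat the case where $K := \ker(\pi(k) - \id_{H_\pi})$ is nonzero. Decompose $\Gamma(H_\pi)'' = \Gamma(K)'' * \Gamma(K^\perp)''$, with $\Gamma(K^\perp)''$ diffuse since $K \neq H_\pi$. Because $\alpha_k|_{\Gamma(K)''} = \id$, the standard relative commutant formula for diffuse free products places $u \in \cZ(\Gamma(K)'')$. If $\dim K \geq 2$ this forces $u \in \CC$, contradicting $\alpha_k \neq \id$. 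If $\dim K = 1$, then $u \in \Gamma(K)'' \cong \rL \ZZ$, and for every unit $\eta \in K^\perp$ a direct freeness moment computation gives
\[
  |\tau(u)|^2 = \tau(u \, s(\eta) \, u^* \, s(\eta)) = \tau(s(\pi(k) \eta) \, s(\eta)) = \langle \pi(k)\eta, \eta\rangle
  \eqstop
\]
The constancy of $\langle \pi(k)\eta, \eta\rangle$ in the unit vector $\eta$ forces $\pi(k)|_{K^\perp} = \pm \id$, both of which are absurd: $+\id$ contradicts the definition of $K$, while $-\id$ gives $|\tau(u)|^2 = -1$.

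The remaining case $K = 0$ is the main technical hurdle. Here $\pi(k)$ has no fixed vector, but by faithfulness $H_\pi$ is at least $2$-dimensional and $\pi(k)$ admits a $2$-dimensional invariant subspace $V \leq H_\pi$ on which $\pi(k)|_V$ is rotation by an angle $\theta \not\equiv 0 \pmod{2\pi}$. Fixing an orthonormal basis $\{\xi_1, \xi_2\}$ of $V$, the circular element $c = s(\xi_1) + i s(\xi_2) \in \Gamma(V)''$ has polar decomposition $c = u_0 a$ with $u_0$ a Haar unitary, $a$ quarter-circular and $*$-free from $u_0$, and one checks $\alpha_k(u_0) = e^{i\theta} u_0$, $\alpha_k(a) = a$. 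The relative commutant formula applied inside $\Gamma(H_\pi)'' = \Gamma(V)'' * \Gamma(V^\perp)''$, using that $\Wstar(a)$ is a MASA in $\Gamma(V)''$, places $u \in \Wstar(a)$, so $u = f(a)$ for some unitary-valued Borel function $f$. Voiculescu's moment formula for the freely independent pair $(f(a), u_0)$ then yields
\[
  |\tau(f(a))|^2 = \tau(f(a) \, u_0 \, f(a)^{-1} \, u_0^*) = \tau(e^{i\theta} \, u_0 \, u_0^*) = e^{i\theta}
  \eqcomma
\]
which is absurd since the left side lies in $[0, 1]$ but $e^{i\theta} \neq 1$.
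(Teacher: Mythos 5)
Your overall strategy --- reducing factoriality to outerness of the free Bogoljubov action and excluding inner $\alpha_k$ by free-probability moment computations --- is genuinely different from the paper's proof, which works inside the decomposition of Theorem \ref{thm:afp-decomposition-ap}: there a central element is first forced into $A_\pi' \cap M_\pi = \rL G \vnt \rL \ZZ$ by Proposition \ref{prop:relative-commutant-ap}, hence into $\rL \ZZ \cong \Linfty(\rS^1)$, and is then killed by ergodicity of the rotation action of the dense subgroup of $\rS^1$ generated by the eigenvalues. Your route is viable in principle (and your $K=0$ computation with the Haar unitary $u_0$ is correct), but as written it has two genuine problems.

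First, in the subcase $\dim K = 1$ the key inference fails: constancy of $\langle \pi(k)\eta, \eta\rangle$ over unit vectors $\eta \in K^\perp$ does \emph{not} force $\pi(k)|_{K^\perp} = \pm\id$. A rotation by a fixed angle $\theta$ satisfies $\langle T\eta,\eta\rangle = \cos\theta$ for every unit vector $\eta$; the diagonal coefficients only determine the symmetric part $\tfrac12(T+T^{-1})$ of an orthogonal $T$. This situation really occurs: for $\pi = \mathbb{1} \oplus \rho$ with $\rho$ a faithful two-dimensional representation and $k$ chosen so that $\rho(k)$ is a rotation by an angle in $(0,\pi/2)$, your computation yields $|\tau(u)|^2 = \cos\theta \in (0,1)$ and no contradiction. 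The gap is repairable: either additionally compute $\tau(u\, s(\eta)\, u^*\, s(\eta')) = \langle \pi(k)\eta,\eta'\rangle$ for orthogonal $\eta,\eta' \in K^\perp$, which freeness forces to vanish and which together with the diagonal constancy does give $\pi(k)|_{K^\perp}=\pm\id$; or, more simply, run your $K=0$ argument on a two-dimensional rotation subspace of $K^\perp$, which makes the case distinction on $\dim K$ unnecessary. Second, in the case $K=0$ you assert that $\pi(k)$ admits a two-dimensional invariant subspace on which it acts by a nontrivial rotation. This is false for a general orthogonal representation of $\ZZ$ --- for the left regular representation $\pi(k)$ is a bilateral shift with no finite-dimensional invariant subspaces whatsoever. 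It holds here only because of the standing assumption of this section that $\pi$ is almost periodic (so $H$ is an orthogonal sum of $\pi$-invariant subspaces of dimension at most two, and one picks a component on which $\pi(k)\neq\id$); this hypothesis must be invoked, since without it the argument collapses exactly where the paper instead passes through Proposition \ref{prop:right-finite-bimodules} to prove the general Corollary \ref{cor:factorial}.
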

\begin{proof}
  Let $\pi$ be a non-faithful representation and take $g \in \ZZ$ such that $\pi(g) = \id$.  Then $u_g \in \rL \ZZ$ is central in $M_\pi$.  For the converse implication, note that $\pi$ is faithful if and only if the eigenvalues of $\pi_\CC$ generate an infinite subgroup of $\rS^1$.  Any central element $x$ of $M_\pi$ must lie in $\rL G \vnt \rL \ZZ$ and hence in $\rL \ZZ$, since $G$ is a free group.  Writing $\rL \freegrp{n} \rtimes \ZZ \cong \freegrp{n} \ltimes \Linfty(\rS^1)$ as in Theorem \ref{thm:afp-decomposition-ap}, the assumption implies that the action of $\freegrp{n}$ on $\Linfty(\rS^1)$ is ergodic.  So $x \in \CC 1$.
\end{proof}

Using Proposition \ref{prop:relative-commutant-ap}, we can derive a representation of $M_\pi$ as a cocycle crossed product of $\rL G \vnt \rL \ZZ$ by the group $K \subset \rS^1$ generated by the eigenvalues of $\pi_\CC$.  For any element $k \in K$ choose an element $g_k \in \freegrp{n}$ such that $\alpha(1)u_{g_k} = k u_{g_k}$.  Define a $G$ valued $2$-cocycle $\Omega$ on $K$ by
\[\Omega(k,l) = g_{kl}g_l^{-1}g_k^{-1} \eqstop\]
Then $K$ acts on $G$ by conjugation and on $\rL \ZZ$ by $k * u_1 = k \cdot u_1$.
Note that if $K$ is cyclic and infinite, then we can choose $\Omega$ to be trivial.  In this case, denote by $g_1, g_2, \dotsc$ a basis of $\freegrp{m + n}$ such that $u_{g_1}$ acts by rotation on $\rS^1$ and $g_2, g_3, \dotsc$ commute with $A_\pi$.  We see that the elements $g_1^k g_i g_1^{-k}$, $i \geq 2$, $k \in \ZZ$ are a free basis of $G$.  So $K$ acts by shifting a free basis of $G$.  This proves the following proposition.

\begin{proposition}
\label{prop:cocycle-crossed-product}
  There is an isomorphism $(A_\pi \subset M_\pi) \cong (1 \otimes \Linfty(\rS^1) \subset K \ltimes_\Omega (\rL G \vnt \Linfty(\rS^1)))$. In particular, if $\pi$ is two dimensional and faithful, then $M_\pi \cong \ZZ \ltimes (\rL \freegrp{\infty} \vnt \Linfty(\rS^1))$, where $\ZZ$ acts on $\freegrp{\infty}$ by shifting the free basis and on $\rS^1$ by multiplication with a non-trivial eigenvalue of $\pi_\CC$.
\end{proposition}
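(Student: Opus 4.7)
By Proposition \ref{prop:relative-commutant-ap} we have $N := A_\pi' \cap M_\pi \cong \rL G \vnt \Linfty(\rS^1)$. The key observation is that if we extend $\pi : \freegrp{n} \to K$ to a homomorphism $\tilde\pi : \freegrp{m+n} \to K$ by declaring $\tilde\pi$ trivial on $\freegrp{m}$, then $G = \ker\tilde\pi$ is a \emph{normal} subgroup of $\freegrp{m+n}$ and $\{g_k : k \in K\}$ is a set of coset representatives. Consequently $\Gamma(H_\pi)'' \cong \rL \freegrp{m+n}$ is generated by $\rL G$ together with the unitaries $v_k := u_{g_k}$, and therefore $M_\pi$ is generated by $N$ together with $\{v_k : k \in K\}$.

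\textbf{Step 1 (cocycle action).} Each $v_k$ normalises $N$: it preserves $\rL G$ by normality of $G$, and, using $u_1 x = \alpha(1)(x) u_1$ in the crossed product together with $\alpha(1)u_{g_k} = k u_{g_k}$, one computes $v_k u_1 v_k^{-1} = k^{-1} u_1$, so $v_k$ also preserves $A_\pi$ and acts on $A_\pi \cong \Linfty(\rS^1)$ by rotation. Set $\sigma_k := \Ad(v_k)|_N$. The relation $g_{kl} = \Omega(k,l) g_k g_l$ in $\freegrp{m+n}$ yields $v_k v_l = u_{\Omega(k,l)}^{*} v_{kl}$, and the group-theoretic $2$-cocycle identity satisfied by $\Omega$ translates directly into the usual unitary $2$-cocycle identity for $w(k,l) := u_{\Omega(k,l)^{-1}} \in U(\rL G) \subset U(N)$, giving a cocycle action of $K$ on $N$.

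\textbf{Step 2 (Fourier expansion).} Since the cosets $\{g_k G\}_{k \in K}$ partition $\freegrp{m+n}$, the orthonormal basis $\{u_w u_1^n : w \in \freegrp{m+n},\, n \in \ZZ\}$ of $\Ltwo(M_\pi)$ decomposes as $\bigsqcup_k v_k \cdot \{u_h u_1^n : h \in G,\, n \in \ZZ\}$. Hence every element of $M_\pi$ admits a unique $\Ltwo$-expansion $\sum_{k \in K} y_k v_k$ with $y_k \in N$, which identifies $M_\pi$ with the cocycle crossed product $K \ltimes_\Omega N$ and carries $A_\pi$ onto $1 \ot \Linfty(\rS^1)$.

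\textbf{Step 3 (two-dimensional faithful case).} Here $n_1 = 1$, $n_2 = m_0 = 0$, so $m = n = 1$ and the eigenvalue $\lambda$ has infinite order; thus $K = \langle\lambda\rangle \cong \ZZ$. Choosing $g_k := g_1^k$ gives $\Omega \equiv 1$, so the cocycle crossed product becomes an ordinary crossed product $\ZZ \ltimes N$. By the Nielsen--Schreier theorem, with Schreier transversal $\{g_1^k\}_{k \in \ZZ}$, the normal subgroup $G = \ker\tilde\pi \subset \freegrp{2}$ is freely generated by $\{g_1^k g_2 g_1^{-k} : k \in \ZZ\}$, so $G \cong \freegrp{\infty}$ and conjugation by $g_1$ shifts this basis, while the induced action on $A_\pi \cong \Linfty(\rS^1)$ is rotation by a non-trivial eigenvalue of $\pi_\CC$. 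The expected main obstacle is purely bookkeeping: tracking the conventions (left vs.\ right action, direction of the cocycle, sign of the rotation) so that the $2$-cocycle identity comes out correctly; the conceptual content is entirely contained in Proposition \ref{prop:relative-commutant-ap} and the coset decomposition of $\freegrp{m+n}$ by $G$.
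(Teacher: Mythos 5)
Your proof is correct and follows essentially the same route as the paper: Proposition \ref{prop:relative-commutant-ap} identifies $N = A_\pi' \cap M_\pi$, coset representatives $g_k$ give the unitaries implementing a cocycle action of $K$ with cocycle $\Omega(k,l)=g_{kl}g_l^{-1}g_k^{-1}$, and in the infinite cyclic case the Schreier transversal $\{g_1^k\}$ yields the free basis $\{g_1^k g_2 g_1^{-k}\}$ of $G$ on which $K$ acts by shift. Your explicit identification of $G$ as $\ker\tilde\pi$, hence a normal subgroup of $\freegrp{m+n}$, is a welcome sharpening of the paper's shorthand $G = \freegrp{m} * \ker\pi$, since normality is precisely what makes the conjugation action of $K$ on $G$ (and the $\Ltwo$-coset decomposition in your Step 2) well defined.
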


\subsection{$A_\pi$-$A_\pi$-bimodules in $\Ltwo(M_\pi)$}
\label{sec:A-A-bimodules}

If $\pi$ is weakly mixing, it is known \cite[Proof of Theorem D.4]{vaes07} that every right finite $A_\pi$-$A_\pi$-bimodule is contained in $\Ltwo(A_\pi)$. More generally, we have the following proposition.

\begin{proposition}
\label{prop:right-finite-bimodules}
  Let $(\pi, H)$ be an orthogonal representation of $\ZZ$ and let $M_\pi = M_{\mathrm{ap}} *_{A_\pi} M_{\mathrm{wm}}$ be the decomposition of $M_\pi$ into almost periodic and weakly mixing part. Then every right finite $A_\pi$-$A_\pi$-bimodule in $\Ltwo(M_\pi)$ lies in $\Ltwo(M_{\mathrm{ap}})$.
\end{proposition}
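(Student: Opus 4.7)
The plan is to bootstrap from the already-known weakly-mixing case stated just above (Vaes's \cite[Proof of Theorem D.4]{vaes07}) by exploiting the amalgamated free product structure $M_\pi = M_{\mathrm{ap}} *_{A_\pi} M_{\mathrm{wm}}$ from Remark \ref{rem:decomposition-wm-ap}. Concretely I would use the Voiculescu-type orthogonal decomposition
\[
L^2(M_\pi) \;=\; L^2(A_\pi) \;\oplus\; \bigoplus_{n\geq 1}\;\bigoplus_{i_1\neq i_2\neq\cdots\neq i_n} L^2(M_{i_1}^\circ)\ot_{A_\pi}\cdots\ot_{A_\pi} L^2(M_{i_n}^\circ),
\]
where each $i_k\in\{\mathrm{ap},\mathrm{wm}\}$ and $M_i^\circ = L^2(M_i)\ominus L^2(A_\pi)$. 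Write $\cH_w$ for the word summand with $w=i_1\cdots i_n$. Then $L^2(M_{\mathrm{ap}})$ is the sum of the trivial word together with $\cH_{\mathrm{ap}}$, while $L^2(M_\pi)\ominus L^2(M_{\mathrm{ap}})$ is the orthogonal direct sum of the $\cH_w$ for alternating words $w$ that contain at least one letter equal to $\mathrm{wm}$.

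The orthogonal projection onto any of the above summands is $A_\pi$-bimodular, so if $\cH\subset L^2(M_\pi)$ is right-finite as an $A_\pi$-$A_\pi$-bimodule then so is each of its images. The proposition therefore reduces to the claim that, whenever $w$ contains an $\mathrm{wm}$ letter, the bimodule $\cH_w$ admits no nonzero right-finite $A_\pi$-$A_\pi$-subbimodule. For the base case $w=\mathrm{wm}$, the bimodule is $L^2(M_{\mathrm{wm}}^\circ)$, and this is exactly what Vaes's argument for weakly mixing $\pi$ gives: since the unitary representation of $\ZZ$ on $L^2(\Gamma(H_{\mathrm{wm}})'')\ominus\CC$ is $\bigoplus_{k\geq 1}\pi_{\mathrm{wm}}^{\ot k}$ and tensor powers of weakly mixing representations remain weakly mixing, there is no finite-index intertwining of $A_\pi$ into itself through $L^2(M_{\mathrm{wm}}^\circ)$.

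To handle the remaining words I would prove a general propagation lemma: if $\cK$ is an $A_\pi$-$A_\pi$-bimodule with no nonzero right-finite subbimodule and $\cL$ is an arbitrary $A_\pi$-$A_\pi$-bimodule, then neither $\cK\ot_{A_\pi}\cL$ nor $\cL\ot_{A_\pi}\cK$ has a nonzero right-finite subbimodule. Indeed, by Popa's intertwining characterisation (Theorem~2.1 of \cite{popa06_2}), such a subbimodule is witnessed by a nonzero $\zeta$ satisfying $a\zeta=\zeta\phi(a)$ for a finite-index corner isomorphism $\phi\colon qA_\pi\to pA_\pi$; identifying $\cK\ot_{A_\pi}\cL$ with a space of Hilbert–Schmidt right-$A_\pi$-linear maps from a right-$A_\pi$-dual of $\cL$ into $\cK$, the intertwining relation on $\zeta$ pushes forward to an intertwiner living inside $\cK$, which must be zero by hypothesis. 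Iterating this lemma along the $\mathrm{wm}$ factor in $\cH_w$ finishes the proof.

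The main obstacle is this propagation step. It requires genuine care when the $\mathrm{wm}$ letter sits strictly in the middle of $w$, so that neither the left nor the right $A_\pi$-action on $\cH_w$ comes directly from the $\mathrm{wm}$-factor; one must set up the Hilbert–Schmidt-module identification carefully so that right-$A_\pi$-finiteness on the tensor product really descends to right-$A_\pi$-finiteness on $\cK$. An alternative route is the spectral picture of Section \ref{sec:measure-from-bimodule}: the spectral measure of a Connes tensor product is the operator-valued convolution of the factors' spectral measures, and absence of a nonzero graph-like (right-finite) part of the spectrum is preserved under convolution with any other $A_\pi$-$A_\pi$-bimodule.
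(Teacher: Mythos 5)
Your reduction to the word subspaces $\cH_w$ of the amalgamated free product decomposition is sound, and so is the base case $w=\mathrm{wm}$. The gap is the propagation lemma: it is false in the generality in which you state it, and the failure occurs exactly at the step you flag as delicate. Take $A=\Linfty([0,1]^2)$, let $B\subset A$ be the functions of the first variable and fix an isomorphism $\sigma:A\ra B$. Let $\cK=\Ltwo(A)$ with the standard left action and right action $\xi\cdot a=\xi\,\sigma(a)$, and let $\cL=\overline{\cK}$, i.e.\ $\Ltwo(A)$ with left action $a\cdot\xi=\sigma(a)\xi$ and the standard right action. Every subbimodule of $\cK$ is of the form $1_E\Ltwo(A)$ (its projection commutes with the multiplication masa), and for non-null $E$ its right dimension over $\sigma(A)=B$ is infinite because the fibres of $E$ over the first coordinate are non-null on a non-null set; so $\cK$ has no nonzero right-finite subbimodule. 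Yet $\cL\ot_A\cK\cong\Ltwo(A)$ with both actions given by multiplication by $\sigma(\cdot)$, and the vector $1$ generates the subbimodule $\Ltwo(B)$, of right $A$-dimension $1$. The reason your argument cannot rule this out is that a vector $\zeta\in\cL\ot_A\cK$ with $a\zeta=\zeta\phi(a)$ only produces a right-$A$-linear map from a twist of $\overline{\cL}$ into $\cK$; its range is a right submodule of $\cK$ but not a subbimodule, so no intertwiner "descends" to $\cK$. Concretely, the left structure of $\cL$ (here an infinite-to-one fibration) can recombine with a non-atomic right structure of $\cK$ to create a graph. The spectral alternative fails for the same reason: absence of a graph-like part of the measure is not preserved under composition of correspondences. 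Note that in your induction the factor sitting to the left of the last $\mathrm{wm}$ letter is a general word space $\cH_{w'}$, so it is really the false general statement that you need, not just the case where $\cL$ is a sum of finite-index pieces (which would be harmless).

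What does propagate is a strictly stronger property of $\Ltwo(M_{\mathrm{wm}})\ominus\Ltwo(A_\pi)$ than the absence of right-finite subbimodules, namely mixing of $A_\pi\subset M_\pi$ relative to $M_{\mathrm{ap}}$, and this is what the paper uses. Its proof verifies directly the hypothesis of Lemma D.3 of \cite{vaes07}: choosing $(g_k)_k$ going to infinity in $\ZZ$ with $\langle\pi(g_k)\xi,\eta\rangle\ra 0$ for all $\xi,\eta\in H_{\mathrm{wm}}$, one computes for Wick words $x=W(\xi_1\ot\dotsm\ot\xi_n)$ and $y=W(\eta_1\ot\dotsm\ot\eta_m)$ with at least one leg in $H_{\mathrm{wm}}$ that $\|\rE_{A_\pi}(xu_{g_k}y^*)\|_2=|\langle \xi_1\ot\dotsm\ot\xi_n,\pi(g_k)\eta_1\ot\dotsm\ot\pi(g_k)\eta_m\rangle|\ra 0$, and Lemma D.3 then places every right-finite $A_\pi$-$A_\pi$-subbimodule inside $\Ltwo(M_{\mathrm{ap}})$ in one stroke, with no word-by-word bookkeeping. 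If you wish to keep your decomposition, the inductive hypothesis has to be this relative mixing property rather than the non-existence of right-finite subbimodules; but establishing it for the word spaces amounts to the same Wick word computation, so the detour through the Connes tensor product buys nothing here.
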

\begin{proof}
By Lemma D.3 in \cite{vaes07}, we have to prove that there is a sequence of unitaries $(u_k)_k$ in $A$ tending to $0$ weakly such that for all $x,y \in M \ominus M_{\mathrm{ap}}$ we have $\| \rE_A(x u_n y^*) \|_2 \ra 0$.  It suffices to consider $x = w(\xi_1 \ot \dotsm \ot \xi_n), y = w(\eta_1 \ot \dotsm \ot \eta_m)$ for some $\xi_1 \ot \dotsm \ot \xi_n H^{\ot n}, \eta_1 \ot \dotsm \ot \eta_m \in H^{\ot m}$ such that at least one $\xi_i$ and one $\eta_j$ lie in $H_{\mathrm{wm}}$.  Take a sequence $(g_k)_k$ going to infinity in $\ZZ$ such that $\langle \pi(g_k)\xi, \eta \rangle \ra 0$ for all $\xi, \eta \in H_{\mathrm{wm}}$.  Then
\begin{align*}
  \| \rE_A(x u_{g_k} y^*) \|_2
  & =
  \| \rE_A(w(\xi_1 \ot \dotsm \xi_n)w(\pi(g_k)\eta_1 \ot \dotsm \ot \pi(g_k)\eta_m)^*)u_{g_k} \|_2 \\
  & =
  |\tau(w(\xi_1 \ot \dotsm \xi_n)w(\pi(g_k)\eta_1 \ot \dotsm \ot \pi(g_k)\eta_m)^*)| \\
  & =
  \langle \xi_1 \ot \dotsm \xi_n , \pi(g_k)\eta_1 \ot \dotsm \ot \pi(g_k)\eta_m) \rangle \\
  & =
  \delta_{n,m} \cdot \langle \xi_1 , \pi(g_k) \eta_1 \rangle \dotsm \langle \xi_n , \pi(g_k) \eta_n \rangle \\
  & \lra
  0
  \eqstop
\end{align*}
This finishes the proof.
\end{proof}

As an immediate consequence, we obtain the following corollaries.
\begin{corollary}
\label{cor:normaliser-quasi-normaliser}
  Let $\pi$ be an orthogonal representation of $\ZZ$.  The quasi-normaliser and the normaliser of $A_\pi \subset M_\pi$ are equal to $M_{\mathrm{ap}}$.  In particular, $A_\pi' \cap M_\pi = \rL G \vnt A_\pi$, where $G$ as defined in Proposition \ref{prop:relative-commutant-ap} is isomorphic to a free group.
\end{corollary}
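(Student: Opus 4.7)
The plan is to combine Proposition \ref{prop:right-finite-bimodules}, Corollary \ref{cor:normal-ap} (applied to $\pi_{\mathrm{ap}}$), and Proposition \ref{prop:relative-commutant-ap}; essentially everything needed is already in place.

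First I would prove the inclusion $\QN_{M_\pi}(A_\pi)'' \subset M_{\mathrm{ap}}$. If $x \in M_\pi$ is a quasi-normaliser of $A_\pi$, there exist $a_1, \dotsc, a_n \in M_\pi$ with $xA_\pi \subset \sum_i A_\pi a_i$. Then $\overline{A_\pi x A_\pi}^{\|\cdot\|_2}$ is a right finite $A_\pi$-$A_\pi$-subbimodule of $\Ltwo(M_\pi)$, and by Proposition \ref{prop:right-finite-bimodules} it is contained in $\Ltwo(M_{\mathrm{ap}})$; in particular $x \in M_{\mathrm{ap}}$. Since $\Norm_{M_\pi}(A_\pi)'' \subset \QN_{M_\pi}(A_\pi)''$, the same inclusion holds for the normaliser.

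Next I would establish the reverse inclusions. By Corollary \ref{cor:normal-ap}, applied to the almost periodic representation $\pi_{\mathrm{ap}}$, $A_\pi$ is regular in $M_{\mathrm{ap}}$, so
\[
  M_{\mathrm{ap}} = \Norm_{M_{\mathrm{ap}}}(A_\pi)'' \subset \Norm_{M_\pi}(A_\pi)'' \subset \QN_{M_\pi}(A_\pi)'' \subset M_{\mathrm{ap}} \eqstop
\]
This gives the equality $\Norm_{M_\pi}(A_\pi)'' = \QN_{M_\pi}(A_\pi)'' = M_{\mathrm{ap}}$.

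For the relative commutant assertion, note that any element $x \in A_\pi' \cap M_\pi$ satisfies $A_\pi x = x A_\pi$, so that $\overline{A_\pi x A_\pi}^{\|\cdot\|_2} = \overline{x A_\pi}^{\|\cdot\|_2}$ has right $A_\pi$-dimension at most $1$. By Proposition \ref{prop:right-finite-bimodules} again, $x \in M_{\mathrm{ap}}$, hence $A_\pi' \cap M_\pi = A_\pi' \cap M_{\mathrm{ap}}$. Applying Proposition \ref{prop:relative-commutant-ap} to $\pi_{\mathrm{ap}}$ identifies this relative commutant with $\rL G \vnt A_\pi$, where $G$ is the free group described there. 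There is no real obstacle here; the only point requiring care is that Proposition \ref{prop:right-finite-bimodules} is stated under no hypothesis on $\pi$, so it applies directly to the general case.
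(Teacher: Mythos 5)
Your proof is correct and takes exactly the paper's route: the paper's own proof is the one-line citation of Proposition \ref{prop:right-finite-bimodules} and Corollary \ref{cor:normal-ap}, which you have simply unpacked, together with Proposition \ref{prop:relative-commutant-ap} for the relative commutant claim. The only cosmetic slip is that right-finiteness of $\overline{A_\pi x A_\pi}^{\|\cdot\|_2}$ follows from the condition $A_\pi x \subset \sum_i a_i A_\pi$ rather than from $x A_\pi \subset \sum_i A_\pi a_i$ (which gives left-finiteness); since both conditions are part of the definition of the quasi-normaliser, nothing is lost.
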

\begin{proof}
  This follows from Proposition \ref{prop:right-finite-bimodules} and Corollary \ref{cor:normal-ap}.
\end{proof}

\begin{corollary}
\label{cor:factorial}
  If $\pi$ is an orthogonal representation of $\ZZ$, then $M_\pi$ is factorial if and only if $\pi$ is faithful.
\end{corollary}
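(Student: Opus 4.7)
The ``only if'' direction is immediate: if $\ker \pi = d\ZZ$ with $d \geq 1$, then $\alpha_d = \id$ on $\Gamma(H)''$, so $u_d \in A_\pi$ commutes with all of $M_\pi = \Gamma(H)'' \rtimes \ZZ$ and gives a non-scalar element of $\cZ(M_\pi)$. For the ``if'' direction, decompose $\pi = \pi_{\mathrm{ap}} \oplus \pi_{\mathrm{wm}}$. If $\pi_{\mathrm{wm}} = 0$ then $M_\pi = M_{\mathrm{ap}}$ and the claim reduces to Corollary \ref{cor:factorial-ap}. Otherwise $\pi_{\mathrm{wm}} \neq 0$, and then $\pi_{\mathrm{wm}}$ (hence $\pi$) is automatically faithful, since an orthogonal representation of $\ZZ$ with nontrivial kernel factors through a finite cyclic group and therefore decomposes as a direct sum of one dimensional subrepresentations, contradicting weak mixing.

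Take $z \in \cZ(M_\pi)$. By Corollary \ref{cor:normaliser-quasi-normaliser}, $z$ lies in $A_\pi' \cap M_\pi = \rL G \vnt A_\pi$, which sits inside $M_{\mathrm{ap}}$; we Fourier expand $z = \sum_{k \in \ZZ} z_k u_k$ with $z_k \in \Gamma(H_{\mathrm{ap}})''$ along the crossed product $M_{\mathrm{ap}} = \Gamma(H_{\mathrm{ap}})'' \rtimes \ZZ$. Fix a nonzero vector $\xi \in H_{\mathrm{wm}}$. Using the relation $u_k s(\xi) u_k^{-1} = s(\pi_{\mathrm{wm}}(k)\xi)$, the commutation $[z, s(\xi)] = 0$ together with uniqueness of Fourier coefficients in $M_\pi = \Gamma(H)'' \rtimes \ZZ$ yields
\[
z_k\, s(\pi_{\mathrm{wm}}(k)\xi) = s(\xi)\, z_k \qquad \text{for every } k \in \ZZ.
\]

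The subalgebras $\Gamma(H_{\mathrm{ap}})''$ and $\Gamma(H_{\mathrm{wm}})''$ are free inside $\Gamma(H)''$, and $s(\xi), s(\pi_{\mathrm{wm}}(k)\xi)$ are nonzero and trace free. Writing $z_k = \tau(z_k) + z_k^0$ with $z_k^0 \in \Gamma(H_{\mathrm{ap}})'' \ominus \CC$ and projecting the displayed equation onto the alternating word components of $\Ltwo(\Gamma(H)'')$ forces $z_k^0 \, s(\pi_{\mathrm{wm}}(k)\xi) = 0$; the standard freeness identity $\|z_k^0\, s(\pi_{\mathrm{wm}}(k)\xi)\|_2 = \|z_k^0\|_2 \cdot \|s(\pi_{\mathrm{wm}}(k)\xi)\|_2$ then gives $z_k \in \CC$. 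Hence $z \in A_\pi$. A second application of $[z, s(\xi)] = 0$, now for arbitrary $\xi \in H$, yields $z_k (s(\pi(k)\xi) - s(\xi)) = 0$; any nonzero $z_k$ therefore forces $\pi(k) = \id_H$, and faithfulness of $\pi$ gives $k = 0$, so $z = z_0 \in \CC$. The only delicate step is the freeness argument, which crucially uses $\pi_{\mathrm{wm}} \neq 0$ to produce a free complementary direction along which to test commutators.
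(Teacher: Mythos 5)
Your proof is correct, and its skeleton is the same as the paper's: the ``only if'' direction is the observation that $u_d$ is central for $d\in\ker\pi$, and the ``if'' direction localises a central element into $M_{\mathrm{ap}}$ via Proposition \ref{prop:right-finite-bimodules} (through Corollary \ref{cor:normaliser-quasi-normaliser}) and then falls back on the almost periodic analysis of Corollary \ref{cor:factorial-ap}. The difference is one of completeness. The paper's proof consists only of the citation of those two results, which handles $\pi_{\mathrm{wm}}=0$ and, more generally, any $\pi$ whose almost periodic part is itself faithful, since then $\cZ(M_\pi)\subset\cZ(M_{\mathrm{ap}})=\CC$. You explicitly treat the remaining case $\pi_{\mathrm{wm}}\neq 0$ with $\pi_{\mathrm{ap}}$ non-faithful (e.g.\ $\pi=\lambda\oplus\mathbb{1}$), where $\cZ(M_{\mathrm{ap}})$ is large and one genuinely has to play the centre off against the weakly mixing directions; the two cited results alone do not obviously dispose of it. Your mechanism for doing so is sound: comparing Fourier coefficients in $\Gamma(H)''\rtimes\ZZ$ turns $[z,s(\xi)]=0$ into $z_k\,s(\pi(k)\xi)=s(\xi)\,z_k$ with $z_k\in\Gamma(H_{\mathrm{ap}})''$ and $\xi\in H_{\mathrm{wm}}$, the word decomposition of $\Ltwo$ of the free product $\Gamma(H_{\mathrm{ap}})''*\Gamma(H_{\mathrm{wm}})''$ isolates $z_k^0\,s(\pi(k)\xi)=0$, and the identity $\|ab\|_2=\|a\|_2\|b\|_2$ for free elements (valid since $\tau(a^*a\,bb^*)=\tau(a^*a)\tau(bb^*)$) kills $z_k^0$; the final step $z\in A_\pi\Rightarrow z\in\CC 1$ uses faithfulness of $\pi$ exactly as needed, and your remark that $\pi_{\mathrm{wm}}\neq 0$ forces faithfulness is right (a non-faithful representation of $\ZZ$ factors through a finite cyclic group and so is a sum of finite dimensional subrepresentations — of dimension one \emph{or two}, not necessarily one as you write, but this does not affect the contradiction with weak mixing). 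In short: same strategy as the paper, executed with an extra freeness argument that makes the reduction to Corollary \ref{cor:factorial-ap} genuinely complete.
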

\begin{proof}
  This follows from Proposition \ref{prop:right-finite-bimodules} and Corollary \ref{cor:factorial-ap}.
\end{proof}

\begin{remark}
  Note that Corollary \ref{cor:factorial} also follows directly from Theorem 5.1 of \cite{houdayershlyakhtenko11}.
\end{remark}


\section{Almost periodic representations}
\label{sec:flexibility-ap}

In this section, we prove that the isomorphism class of $M_\pi$ for an almost periodic orthogonal representation $\pi$ of the integers depends at most on the concrete subgroup of $\rS^1$ generated by the eigenvalues of the complexification of $\pi$.  We also classify non-faithful almost periodic orthogonal representations, that is periodic orthogonal representations, in terms of their kernel and their dimension.

\subsection{Isomorphism of free Bogoljubov crossed products of almost periodic representations depends at most on the subgroup generated by the eigenvalues of their complexifications}
\label{sec:normal-forms}

The following lemma will be used extensively in the proof of Theorem \ref{thm:dependence-on-subgroup-only}.

\begin{lemma}
\label{lem:change-free-basis}
  Let $S$ be any set and $x_s$, $s \in S$ a free basis of $\freegrp{S}$.  Let $I \subset S$ and $w_s$, $s \in I$ be words with letters in $\{x_s \amid s \in S \setminus I\}$.  Then $y_s = x_s w_s$, $s \in I$ together with $y_s = x_s$, $s \in S \setminus I$ form a basis of $\freegrp{S}$.
\end{lemma}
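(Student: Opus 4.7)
The plan is to exhibit an endomorphism of $\freegrp{S}$ sending $x_s \mapsto y_s$ and show it is an automorphism by constructing an explicit inverse. Since the universal property of the free group $\freegrp{S}$ allows us to define a homomorphism by specifying the images of the basis $\{x_s\}_{s\in S}$, there are well-defined endomorphisms
\[
  \phi: \freegrp{S} \to \freegrp{S}, \quad \phi(x_s) = y_s
\]
and
\[
  \psi: \freegrp{S} \to \freegrp{S}, \quad
  \psi(x_s) =
  \begin{cases}
    x_s w_s^{-1} & \text{if } s \in I, \\
    x_s & \text{if } s \in S \setminus I.
  \end{cases}
\]
Once I have checked that $\psi \circ \phi = \id = \phi \circ \psi$, it follows that $\phi$ is an automorphism, hence sends a free basis to a free basis, which is exactly the claim.

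For the verification, the key observation is that $w_s$ is a word in the letters $\{x_t \mid t \in S \setminus I\}$, all of which are fixed by $\psi$. Therefore $\psi(w_s) = w_s$ for all $s \in I$. Consequently, for $s \in I$,
\[
  \psi(\phi(x_s)) = \psi(x_s w_s) = \psi(x_s)\,\psi(w_s) = x_s w_s^{-1} w_s = x_s,
\]
and clearly $\psi(\phi(x_s)) = x_s$ for $s \in S\setminus I$. The computation of $\phi \circ \psi$ is symmetric: for $s \notin I$ we have $\phi(\psi(x_s)) = x_s$, while for $s \in I$, using that $\phi$ fixes every $x_t$ with $t \notin I$ and hence $\phi(w_s) = w_s$, we get
\[
  \phi(\psi(x_s)) = \phi(x_s w_s^{-1}) = y_s w_s^{-1} = x_s w_s w_s^{-1} = x_s.
\]
Since both composites agree with the identity on a free generating set, they are the identity on all of $\freegrp{S}$.

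There is no real obstacle here; the only thing to be careful about is that $w_s$ does not involve any letter from $\{x_t \mid t \in I\}$, which is precisely the hypothesis and is what makes the homomorphism $\psi$ fix $w_s$. For infinite $S$ one cannot appeal to the Hopfian-type fact that surjective endomorphisms of finitely generated free groups are automorphisms, so writing down the two-sided inverse explicitly, as above, is the cleanest route.
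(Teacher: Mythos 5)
Your proof is correct and is essentially the paper's own argument: both define the endomorphism $x_s \mapsto y_s$ and exhibit the explicit inverse $x_s \mapsto x_s w_s^{-1}$ (for $s \in I$), $x_s \mapsto x_s$ (otherwise). You merely spell out the verification that the composites are the identity, which the paper leaves implicit.
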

\begin{proof}
  It suffices to show that the map $\freegrp{S} \ra \freegrp{S}: x_s \mapsto y_s$ has an inverse.  This inverse is given by the map
  \[
    \freegrp{S} \ra \freegrp{S}:
    x_s \mapsto
    \begin{cases}
      x_sw_s^{-1}, & \text{if } s \in I \\
      x_s, & \text{otherwise.}
    \end{cases}
  \]
\end{proof}

\begin{theorem}
\label{thm:dependence-on-subgroup-only}
Let $\pi, \rho$ be orthogonal representations of $\ZZ$ whose almost periodic parts have the same dimension and the eigenvalues of their complexifications generate the same concrete subgroup of $\rS^1$.  Then $M_\pi \cong M_\rho$  via an isomorphism that is the identity on $A_\pi = \rL \ZZ = A_\rho$.
\end{theorem}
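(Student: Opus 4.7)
The plan is to reduce the statement to the almost periodic case. By Remark \ref{rem:decomposition-wm-ap}, we have $M_\pi \cong M_{\pi_{\mathrm{ap}}} *_A M_{\pi_{\mathrm{wm}}}$ and analogously for $\rho$; reading the theorem with the evident implicit requirement that the weakly mixing parts of $\pi$ and $\rho$ coincide, amalgamated free products are functorial with respect to $A$-preserving isomorphisms, so it suffices to produce an $A$-preserving isomorphism between $M_{\pi_{\mathrm{ap}}}$ and $M_{\rho_{\mathrm{ap}}}$.

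Next, using (a small enhancement of) the proof of Theorem \ref{thm:afp-decomposition-ap}, I would present each of $M_{\pi_{\mathrm{ap}}}, M_{\rho_{\mathrm{ap}}}$ as a free-group crossed product $\rL\freegrp{N}\rtimes\ZZ$ whose $\ZZ$-action is implemented by a character of $\freegrp{N}$ taking values in the eigenvalue subgroup $K\subset\rS^1$. Here $N = \dim\pi_{\mathrm{ap}} = \dim\rho_{\mathrm{ap}}$, and the generators of $\freegrp{N}$ are built as follows: each two-dimensional irreducible summand with eigenvalue $\lambda$ contributes, via the polar decomposition of $s(\xi)+is(\eta)$, a Haar unitary rotated by $\lambda$ together with a quarter-circular $\alpha$-invariant, which is upgraded to a Haar unitary generating its $\Wstar$-algebra; each one-dimensional trivial summand likewise produces a Haar unitary fixed by $\alpha$; and each one-dimensional $(-1)$-summand contributes a Haar unitary $w = F(s)$ where $F:(\RR,\mu_s)\to(\rS^1,\mathrm{Haar})$ is a measure-preserving isomorphism with $F(-t)=-F(t)$ (obtained by folding the symmetric measure $\mu_s$ across the origin), so that $\alpha(w)=-w$. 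This yields
\[
M_{\pi_{\mathrm{ap}}} \cong \rL\freegrp{N}\rtimes_{\alpha_\pi}\ZZ, \qquad M_{\rho_{\mathrm{ap}}}\cong \rL\freegrp{N}\rtimes_{\alpha_\rho}\ZZ,
\]
with $\alpha_\pi,\alpha_\rho$ given by surjective characters $\chi_\pi,\chi_\rho:\freegrp{N}\to K$.

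The central step is then to construct a group automorphism $\phi\in\Aut(\freegrp{N})$ satisfying $\chi_\rho\circ\phi = \chi_\pi$. Any such $\phi$ induces a trace-preserving $*$-automorphism $\tilde\phi$ of $\rL\freegrp{N}$, and the character identity translates directly into the intertwining relation $\tilde\phi\circ\alpha_\pi = \alpha_\rho\circ\tilde\phi$; extending $\tilde\phi$ to the crossed products by mapping the implementing unitary of $\ZZ$ in $M_\pi$ to the one in $M_\rho$ produces the desired $A$-preserving isomorphism $M_{\pi_{\mathrm{ap}}}\cong M_{\rho_{\mathrm{ap}}}$.

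To exhibit $\phi$, I would apply Lemma \ref{lem:change-free-basis} iteratively: each Nielsen-style replacement $g_i\mapsto g_i w$ with $w$ a word in the remaining generators preserves the free-basis property while multiplying $\chi_\pi(g_i)$ by the factor $\chi_\pi(w)\in K$. The problem therefore reduces to the combinatorial assertion that any two surjective homomorphisms from $\freegrp{N}$ to an abelian group lie in the same orbit under precomposition by $\Aut(\freegrp{N})$. Passing to the abelianisation $\freegrp{N}^{\mathrm{ab}}=\ZZ^N$ via the classical surjection $\Aut(\freegrp{N})\twoheadrightarrow\GL{N}{\ZZ}$, this is a Smith-normal-form statement about $\ZZ$-linear surjections $\ZZ^N\to K$ with prescribed codomain, and both $\chi_\pi,\chi_\rho$ can be brought to a common normal form by elementary column operations. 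I expect the main technical obstacle to be the case $N=\infty$, where the reduction must be performed by a back-and-forth / Euclidean-algorithm-style argument that exhausts $K$ by its finitely generated subgroups and uses the surplus of generators in $\freegrp{\infty}$ to absorb them one stage at a time.
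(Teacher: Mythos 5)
Your overall architecture coincides with the paper's: reduce to the almost periodic case via the amalgamated free product decomposition of Remark \ref{rem:decomposition-wm-ap}, encode $M_{\pi_{\mathrm{ap}}}$ by a character $\chi_\pi\colon\freegrp{N}\ra K\subset\rS^1$ evaluated on a free basis (this is the content of Theorem \ref{thm:afp-decomposition-ap}, including your treatment of the $(-1)$-summands), and realise the isomorphism by an automorphism of $\freegrp{N}$ intertwining the two characters, built from the Nielsen moves of Lemma \ref{lem:change-free-basis}. Up to that point the proposal is sound and matches the paper.

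The gap is in the final combinatorial step. The assertion that any two surjections of $\freegrp{N}$ onto an abelian group $K$ lie in a single orbit under precomposition by $\Aut(\freegrp{N})$ is false: for $N=1$ and $K=\ZZ/5\ZZ$ the surjections $1\mapsto\zeta$ and $1\mapsto\zeta^2$ are not related by $\Aut(\ZZ)=\{\pm1\}$. The appeal to Smith normal form does not repair this, because Smith normal form also uses row operations, i.e.\ automorphisms of the target; here you may only act on the source, since an automorphism of $K$ need not be induced by any transformation of $\rS^1$ and would in any case destroy the conclusion that the isomorphism restricts to the identity on $A_\pi=\rL\ZZ$. What you actually need is uniqueness of the orbit of generating $N$-tuples of $K$ under Nielsen moves when $N$ strictly exceeds the minimal number of generators $d(K)$ --- a true but not free statement, which moreover must be checked to apply. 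It does apply in every case arising from a representation: each conjugate pair of non-real eigenvalues contributes, besides the rotated Haar unitary, a companion basis element with character value $1$ (the quarter-circular part of the polar decomposition), so $N\geq d(K)+n_1$ in the notation of Theorem \ref{thm:afp-decomposition-ap}, and the remaining cases with $n_1=0$ have $K\subseteq\{\pm 1\}$ and are elementary. You neither state this reduction correctly nor verify the surplus-of-generators condition. The paper's proof is precisely a hands-on execution of this step: it uses the trivially acting generators $y_i$, $w_i$ to absorb the discrepancy between the two eigenvalue lists, with a case distinction according to whether $-1$ occurs as an eigenvalue. Your back-and-forth suggestion for $N=\infty$ is the right instinct there, but the finite-dimensional case already needs more than you provide.
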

\begin{proof}
  By the amalgamated free product decomposition $M_\pi \cong M_{\mathrm{ap}} *_{A_\pi} M_{\mathrm{wm}}$ of Remark \ref{rem:decomposition-wm-ap}, it suffices to consider almost periodic representations.  Denote by $G$ the subgroup of $\rS^1$ generated by the eigenvalues of the complexification of $\pi$.
We may assume that the number of eigenvalues in $e^{2\pi i (0,\frac{1}{2})}$ of the complexification of $\pi$ is larger than the one of $\rho$.
Denote by $\lambda_i \in  e^{2\pi i (0,\frac{1}{2})}$, $0 \leq i < n_1$, $n_1 \in \NN \cup \{\infty\}$  and $\ol{\lambda_i}$, $0 \leq i < n_1$ the eigenvalues of the complexification of $\pi$ that are not equal to $1$ or $-1$.  Denote by $n_2, m_0 \in \NN \cup \{\infty\}$ the multiplicity of $-1$ and $1$, respectively, as eigenvalues of  $\pi$.  By Theorem \ref{thm:afp-decomposition-ap}, we have $M_\pi \cong \freegrp{\dim \pi} \ltimes \Linfty(\rS^1)$, where $\freegrp{\dim \pi}$ has a basis consisting of
\begin{itemize}
\item elements $x_i$, $0 \leq i < n_1$ acting on $\rS^1$ by multiplication with $\lambda_i$,
\item elements $y_i$, $0 \leq i < n_1$ acting trivially on $\rS^1$,
\item elements $z_i$, $0 \leq i < n_2$ acting on $\rS^1$ by multiplication with $-1$ and
\item elements $w_i$, $0 \leq i < m_0$ acting trivially on $\rS^1$.
\end{itemize}
Denote by $\mu_i \in e^{2 \pi i (0,\frac{1}{2})}$, $0 \leq i < l_1 \in \NN \cup \{ \infty \}$ the non-trivial eigenvalues of the complexification of $\rho$ that lie in the upper half of the circle and by $l_2,k_0 \in \NN \cup \{\infty\}$ the multiplicity of $-1$ and $1$, respectively, as an eigenvalue of $\rho$.  Since $\dim \pi = \dim \rho$, we have $2 \cdot l_1 + l_2 + k_0 = 2 \cdot n_1 + n_2 + m_0$.  We will find a new basis $r_i$ $(0 \leq i < l_1)$, $s_i$ $(0 \leq i < l_1 + k_0)$, $t_i$ $(0 \leq i < l_2)$ of $\freegrp{\dim \pi}$ such that
\begin{itemize}
\item $r_i$, $0 \leq i < l_1$, acts by multiplication with $\mu_i$ on $\rS^1$,
\item $s_i$, $0 \leq i < k_0 + l_1$, acts trivially on $\rS^1$ and
\item $t_i$, $0 \leq i < l_2$, acts by multiplication with $-1$ on $\rS^1$.
\end{itemize}
Invoking Theorem \ref{thm:afp-decomposition-ap}, this suffices to finish the proof.

In what follows, we will apply Lemma \ref{lem:change-free-basis} repeatedly.  Replace the basis elements $y_i$, $0 \leq i < n_1$ by $\tilde y_i = y_i x_i$ for $0 \leq i < n_1$.  Then $\tilde y_i$ acts on $\rS^1$ by multiplication with $\mu_i$, $0 \leq i < n_1$.  Recall that we assumed $l_1 \leq n_1$.  Since the subgroups of $\rS^1$ generated by the eigenvalues of the complexifications of $\pi$ and $\rho$ agree, for every $0 \leq i < l_1$ there are elements $a_{i,1} \dotsc, a_{i,\alpha} \in \ZZ$, $0 \leq j_{i,1}, \dotsc, j_{i,\alpha} < n_1$ and $a_{i,0} \in \{0,1\}$ such that
\[
  \mu_i
  =
  \lambda_{j_1}^{a_{i,1}} \dotsm \, \lambda_{j_\alpha}^{a_{i,\alpha}} \cdot (-1)^{a_{i,0}}
  \eqcomma
\]
where $a_{i,0} = 0$ if $-1$ is not an eigenvalue of $\pi$.  Replacing $x_i$, $0 \leq i < l_1$ by
\[
  r_i
  =
  x_i \tilde y_i^{-1} \tilde y_{j_{i,1}}^{a_{i,1}} \dotsm \, \tilde y_{j_{i, \alpha(i)}}^{a_{i,\alpha(i)}} \cdot z_1^{a_{i,0}}
  \eqcomma
\]
we obtain a new basis of $\freegrp{\dim \pi}$ consisting of $r_i$ $(0 \leq i <l_1)$, $x_i$ $(l_1 \leq i < n_1)$, $\tilde y_i$ $(0 \leq i < n_1)$, $z_i$ $(0 \leq i < n_2)$ and $w_i$ $(0 \leq i < m_0)$.

We distinguish whether $-1$ in an eigenvalue of $\rho$ or not.  If $-1$ is no eigenvalue of $\rho$, we produce elements $s_i$ $(0 \leq i < (n_1 - l_1) + n_1 + n_2 + m_0 )$ acting trivially on $\rS^1$, where we put $n_1 - l_1 = 0$, if $l_1 = n_1 = \infty$.  Replace $x_i$ by $x_i \tilde y_i^{-1}$ for $l_1 \leq i < n_1$ and then multiply $\tilde y_i$, $0 \leq i < n_1$  and $z_i$, $0 \leq i < n_2$ from the right with words in $r_i$, $0 \leq i < l_1$ so as to obtain these new basis elements $s_i$ $(0 \leq i < (n_1 - l_1) + n_1 + n_2 + m_0 )$.  Since $\dim \pi = 2n_1 + n_2 + m_0 = l_1 + (n_1 - l_1) + n_1 + n_2 + m_0$ and $l_2 = 0$, we found a basis $r_i$ $(0 \leq i < l_1)$, $s_i$ $(0 \leq i < l_1 + k_0)$ of $\freegrp{\dim \pi}$ acting on $\rS^1$ as desired.  This finishes the proof in the case $-1$ is no eigenvalue of $\rho$.

Now assume that $-1$ is an eigenvalue of $\rho$.  We distinguish three further cases. \\
\textit{Case $l_1 < n_1$:}  There are elements $a_1 \dotsc, a_\alpha \in \ZZ$, $0 \leq i_1, \dotsc, i_\alpha < n_1$ and $a_0 \in \{0,1\}$ such that
\[
  -1
  =
  \lambda_{i_1}^{a_1} \dotsm \, \lambda_{i_\alpha}^{a_\alpha} \cdot (-1)^{a_0}
  \eqcomma
\]
where $a_0 = 0$ if $-1$ is not an eigenvalue of $\pi$.  Replace $x_{l_1 + 1}$ by
\[
  t_1
  =
  x_{l_1 + 1} \tilde y_{l_1 + 1}^{-1} \tilde y_{i_1}^{a_1} \dotsm \, \tilde y_{i_\alpha}^{a_\alpha} z_1^{a_0}
  \eqstop
\] \\
\textit{Case $l_1 = n_1$ and $-1$ is an eigenvalue of $\pi$:} Put $t_1 = z_1$. \\
\textit{Case $l_1 = n_1$ and $-1$ is no eigenvalue of $\pi$:} Since $2n_1 + m_0 = 2l_1 + l_2 + k_0$, in this case, $\pi$ has a trivial subrepresentation of dimension $1$ or $\pi$ is infinite dimensional. Hence, we may assume that $m \geq 1$, since all $y_i, 0 \leq i < n_1$ act trivially on $\rS^1$.  There are elements $a_1 \dotsc, a_\alpha \in \ZZ$, $0 \leq i_1, \dotsc, i_\alpha < n_1$ such that
\[
  -1
  =
  \lambda_{i_1}^{a_1} \dotsm \, \lambda_{i_\alpha}^{a_\alpha}
  \eqstop
\]
Put
\[
  t_1
  =
  w_1 \tilde y_{i_1}^{a_1} \dotsm \, \tilde y_{i_\alpha}^{a_\alpha}
  \eqstop
\]

In all three cases, we obtain a basis of $\freegrp{\dim \pi}$ with elements $r_i$ $(0 \leq i < l_1)$, possibly $t_1$ and some other elements such that
\begin{itemize}
\item $r_i$, $0 \leq i < l_1$, acts by multiplication with $\mu_i$ on $\rS^1$,
\item $t_1$ acts by multiplication with $-1$ on $\rS^1$ and
\item all other elements of the basis act on $\rS^1$ by multiplication with some element in $G \subset \rS^1$.
\end{itemize}
We can multiply the elements different from $r_i$, $(0 \leq i < l_1)$, and $t_1$ in the basis by some word in the letters $r_i$, $0 \leq i < l_1$ and $t_1$ in order to obtain a basis $r_i$ $(0 \leq i < l_1)$, $s_i$ $(0 \leq i < \dim \pi - l_1 -1)$, $t_1$ or $r_i$ $(0 \leq i < l_1)$, $s_i$ $(0 \leq i < \dim \pi - l_1)$ where all elements $s_i$ act trivially on $\rS^1$.  We used the convention $\dim \pi - l_1 = \infty$, if $l_1 = \dim \pi = \infty$.
If $l_1 + k_0 < \infty$, replace $s_i$, $(l_1 + k_0 \leq i < l_1 + k_0 + l_2 - 1)$ by $t_{i - k + 2} =  s_i \cdot t_1$, in order to obtain a basis $r_i$ $(0 \leq i < l_1)$, $s_i$ $(0 \leq i < l_1 + k_0)$, $t_i$ $(0 \leq i < l_2)$ of $\freegrp{\dim \pi}$ acting on $\rS^1$ as desired.  If $l_1 + k_0 = \infty$, then replace $l_2$-many $s_i$ by $s_i t_1$ so as to obtain the new basis $r_i$ $(0 \leq i < l_1)$,  $s_i$ $(0 \leq i < l_1 + k_0)$, $t_i$ $(0 \leq i < l_2)$ of $\freegrp{\dim \pi}$ acting on $\rS^1$ as desired.  This finished the proof.
\end{proof}

\subsection{The classification of free Bogoljubov crossed products associated with periodic representations of the integers is equivalent to the isomorphism problem for free group factors}
\label{sec:periodic}

The classification of free Bogoljubov crossed products associated with non-faithful, that is periodic, orthogonal representations of $\ZZ$ implies a solution to the isomorphism problem for free group factors.  For example, if $\mathbb{1}$ denotes the trivial orthogonal representation of $\ZZ$, we have $M_{n \cdot \mathbb{1}} \cong \rL \freegrp{n} \vnt \rL \ZZ$.  So, proving whether $M_{n \cdot \mathbb{1}} \cong M_{m \cdot \mathbb{1}}$ or not for different $n$ and $m$ amounts to solving the isomorphism problem for free group factors. More generally, we have the following result.

\begin{theorem}
\label{thm:classification-periodic}
  Let $\pi$ be a periodic orthogonal representation of the integers.  If $\pi$ is trivial, then $A_\pi \subset M_\pi$ is isomorphic to an inclusion $1 \ot \Linfty([0,1]) \subset \rL \freegrp{\dim \pi} \ot \Linfty([0,1])$.  If $\pi$ is one dimensional and non-trivial, then $(A_\pi \subset M_\pi) \cong (\CC^2 \ot 1 \ot \Linfty([0,1]) \subset \Cmat{2} \ot \Linfty([0,1]) \ot \Linfty([0,1]))$.  If $\pi$ has dimension at least $2$, let $T$ be the index of the kernel of $\pi$ in $\ZZ$.  Then $(A_\pi \subset M_\pi) \cong (\CC^T \otimes \Linfty([0,1]) \subset \rL \freegrp{r} \vnt \Linfty([0,1]))$, where $\rL \freegrp{r}$ is an interpolated free group factor with parameter
\[r = 1 + \frac{1}{T}(\dim \pi - 1) \eqstop\]
\end{theorem}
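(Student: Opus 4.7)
My plan is to split the argument into three cases. If $\pi$ is trivial then $\ZZ$ acts trivially on $\Gamma(H)''$ and $M_\pi \cong \rL\freegrp{\dim \pi} \vnt \rL\ZZ \cong \rL\freegrp{\dim \pi} \vnt \Linfty([0,1])$, with $A_\pi$ mapped to $1 \vnt \Linfty([0,1])$. If $\pi$ is one-dimensional and non-trivial, then $\pi(1)$ is negation on $\RR$, so via the semicircular distribution $\Gamma(\RR)'' \cong \Linfty([-2,2])$ on which $\ZZ/2$ acts by the free involution $x \mapsto -x$; hence $\Gamma(\RR)'' \rtimes \ZZ/2 \cong M_2(\Linfty([0,1]))$, and after diagonalising the group generator $\CC[\ZZ/2]$ is carried to $\CC^2 \vnt 1 \subset M_2 \vnt \Linfty([0,1])$. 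Tensoring with $\rL(2\ZZ) \cong \Linfty([0,1])$ then closes this case.

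For the main case $\dim \pi \geq 2$, set $T = [\ZZ : \ker \pi]$. Since $\ker \pi = T\ZZ$ acts trivially on $\Gamma(H)''$, the central subalgebra $\rL(T\ZZ) \subset M_\pi$ splits off as a tensor factor, giving $M_\pi \cong N \vnt \rL(T\ZZ)$ with $N := \Gamma(H)'' \rtimes \ZZ/T$ and $A_\pi \cong \CC[\ZZ/T] \vnt \rL(T\ZZ) \cong \CC^T \vnt \Linfty([0,1])$. The proof of Theorem~\ref{thm:afp-decomposition-ap} adapts to the group $\ZZ/T$ (whose irreducible orthogonal representations are one- or two-dimensional with eigenvalues among the $T$-th roots of unity) and yields the amalgamated free product decomposition
\[
N \cong (\rL\freegrp{m} \vnt \CC^T) \,*_{\CC^T}\, (\freegrp{n} \ltimes \CC^T),
\]
where $m = n_1 + m_0$, $n = n_1 + n_2$, $m + n = \dim \pi$, and $\freegrp{n}$ acts on $\CC^T = \CC[\ZZ/T]$ via the canonical surjection $\phi : \freegrp{n} \twoheadrightarrow \ZZ/T$ sending each free generator to its assigned eigenvalue.

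The amalgam is now finite dimensional, so Theorem~\ref{thm:dykemaredelmeier} applies once factoriality of $N$ is verified. Since $\ZZ/T$ permutes the atoms of $\CC^T$ transitively, the only $\freegrp{n}$-fixed element of $\CC^T$ is $\CC \cdot 1$, so $\cZ(M_1) \cap \cZ(M_2) \cap \cZ(\CC^T) = \CC$ and Theorem~\ref{thm:houdayervaes} yields factoriality. For the free dimensions: $\rL\freegrp{m} \vnt \CC^T = \bigoplus_{k=0}^{T-1} \rL\freegrp{m}$ has free dimension $1 + (m-1)/T$; for $\freegrp{n} \ltimes \CC^T$, compressing by a minimal projection $p \in \CC^T$ of trace $1/T$ yields $p(\freegrp{n} \ltimes \CC^T)p \cong \rL(\ker \phi) \cong \rL\freegrp{1 + T(n-1)}$ by Schreier's formula, so $\freegrp{n} \ltimes \CC^T$ has free dimension $1 + (n-1)/T$; and $\CC^T$ has free dimension $1 - 1/T$. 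The Dykema-Redelmeier formula then gives
\[
N \cong \rL\freegrp{r}, \qquad r = 1 + \tfrac{m-1}{T} + 1 + \tfrac{n-1}{T} - \bigl(1 - \tfrac{1}{T}\bigr) = 1 + \tfrac{\dim \pi - 1}{T},
\]
and tensoring with $\rL(T\ZZ) \cong \Linfty([0,1])$ completes the proof. The main technical hurdles I anticipate are the adaptation of Theorem~\ref{thm:afp-decomposition-ap} to $\ZZ/T$, the compression argument identifying $\freegrp{n} \ltimes \CC^T$ as (an amplification of) an interpolated free group factor, and tracking degenerate sub-cases such as $m = 0$ or $n \leq 1$ in which intermediate pieces may fail to be $\text{II}_1$ factors while the overall amalgamated free product remains one.
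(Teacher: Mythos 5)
Your proof is correct, and its core coincides with the paper's: both reduce to an amalgamated free product over the finite-dimensional algebra $\CC^T$ tensored with $\Linfty([0,1])$, establish factoriality of that amalgamated free product via Theorem \ref{thm:houdayervaes}, and then read off the parameter $r = 1 + \tfrac{1}{T}(\dim\pi-1)$ from the free-dimension arithmetic of Theorem \ref{thm:dykemaredelmeier}. Where you differ is in the reduction. The paper first invokes Theorem \ref{thm:dependence-on-subgroup-only} to replace $\pi$ by $\pi_0 \oplus n\cdot\mathbb{1}$ with a single non-trivial irreducible summand $\pi_0$, then takes the amalgamated free product decomposition of Theorem \ref{thm:afp-decomposition-ap} over $\rL\ZZ \cong \CC^T \vnt \Linfty([0,1])$ and pulls the common $\Linfty([0,1])$ out of the amalgam; the only non-commutative building blocks it ever needs are $\rL\ZZ \vnt \CC^T$ and $\Linfty([0,1])\vnt\Cmat{T}$. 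You instead split $\rL(T\ZZ)$ off the crossed product at the outset (which requires the small but standard observation that a measurable $T$-th root of $u^T$ in the centre lets one replace the implementing unitary $u$ by one of order $T$), and then treat the general periodic representation in one stroke, identifying $\freegrp{n}\ltimes\CC^T \cong \Cmat{T}\ot\rL\freegrp{1+T(n-1)}$ by the imprimitivity theorem and Schreier's index formula. The trade-off is clear: your route bypasses the basis-change argument of Theorem \ref{thm:dependence-on-subgroup-only} entirely (at the cost of the Schreier computation and of tracking the degenerate cases $m=0$ and $n=1$ you rightly flag, where one factor of the amalgamated free product is not diffuse and one must argue factoriality directly rather than via Theorem \ref{thm:houdayervaes}), whereas the paper's normalisation keeps every intermediate algebra as simple as possible. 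Both yield the same tracking of $A_\pi$ onto $\CC^T \ot \Linfty([0,1])$, which is needed for the stated form of the inclusion.
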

\begin{proof}
  The case where $\pi$ is trivial, follows directly from the definition of $\Gamma(H, \ZZ, \pi)''$.  To prove all other cases, by Theorem \ref{thm:dependence-on-subgroup-only}, it suffices to consider representations $\pi = \pi_0 \oplus n \cdot \mathbb{1}$ with $\pi_0$ irreducible and non-trivial and $n \in \NN \cup \{\infty\}$. 

We first consider irreducible representations.  The case of $\pi$ one dimensional is immediately verified from the definition of $M_\pi = \Gamma(H, \ZZ, \pi)''$.  If $\pi$ has dimension $2$ and is irreducible denote by $\lambda = e^{\frac{2\pi i}{T}}$ and $\ol{\lambda} = e^{-\frac{2\pi i}{T}}$, with $T = [\ZZ : \ker \pi] \in \NN_{\geq 2}$, the eigenvalues of $\pi_\CC$.  Then
\begin{align*}
  M_\pi
  & \cong
  (\rL \ZZ \vnt \rL \ZZ) *_{1 \otimes \rL \ZZ} (\rL \ZZ \rtimes_{\lambda} \ZZ) \\
  & \cong
  (\rL \ZZ \vnt \CC^T \vnt \Linfty([0,1])) *_{1 \otimes \CC^T \vnt \Linfty([0,1])} (\Linfty([0,1]) \vnt \Cmat{T} \vnt \Linfty([0,1])) \\
  & \cong
  ((\rL \ZZ \vnt \CC^T) *_{1 \otimes \CC^T} (\Linfty([0,1]) \vnt \Cmat{T})) \vnt \Linfty([0,1])
  \eqstop
\end{align*}
Since $(\rL \ZZ \vnt \CC^T) *_{1 \otimes \CC^T} (\Linfty([0,1]) \vnt \Cmat{T})$ is a non-amenable factor by Theorem \ref{thm:houdayervaes}, Theorem \ref{thm:dykemaredelmeier} shows that
\[((\rL \ZZ \vnt \CC^T) *_{1 \otimes \CC^T} (\Linfty([0,1]) \vnt \Cmat{T})) \cong \rL \freegrp{r}\]
with
\[
  r 
  = 1 + 1 - (1 - \frac{1}{T})
  = 1 + \frac{1}{T}(\dim \pi - 1) \eqstop
\]
Moreover,
\begin{align*}
  (A_\pi \subset M_\pi) 
  & \cong (1 \ot \CC^T \vnt \Linfty([0,1]) \subset ((\rL \ZZ \vnt \CC^T) *_{1 \otimes \CC^T} (\Linfty([0,1]) \vnt \Cmat{T})) \vnt \Linfty([0,1])) \\
  & \cong (\CC^T \otimes \Linfty([0,1]) \subset \rL \freegrp{r} \vnt \Linfty([0,1])) \eqstop
\end{align*}

Consider now $\pi = \pi_0 \oplus n \cdot \mathbb{1}$ for an irreducible, non-trivial and non-faithful representation of dimension two $\pi_0$. The case where $\pi_0$ is of dimension one and has eigenvalue $-1$ is similar, but simpler.  Let $T = [\ZZ : \ker \pi_0] \in \NN_{\geq 2}$ and $n \in \NN \cup \{\infty\}$.  Let $r_0 = 1 + \frac{1}{T}$. Then Theorems \ref{thm:dykemaredelmeier} and \ref{thm:houdayervaes} imply that
\begin{align*}
  M_{\pi_0 \oplus n \cdot \tau}
  & \cong
  (\rL \freegrp{n} \vnt \rL \ZZ) *_{1 \ot \rL \ZZ \cong \CC^T \ot \Linfty([0,1])} (\rL \freegrp{r_0} \vnt \Linfty([0,1])) \\
  & \cong
  (\rL \freegrp{n} \ot \CC^T *_{1 \ot \CC^T} \rL \freegrp{r_0}) \vnt \Linfty([0,1]) \\
  & \cong
  \rL \freegrp{r} \ot \Linfty([0,1])
  \eqcomma
\end{align*}
with
\[r = 1 + \frac{1}{T}(n - 1) + r_0 - (1 - \frac{1}{T}) = \frac{1}{T}(\dim(\pi_0 \oplus n \cdot \mathbb{1}) - 1) \eqstop\]
Also
\[(A_\pi \subset M_\pi)  \cong (\CC^T \otimes \Linfty([0,1]) \subset \rL \freegrp{r} \vnt \Linfty([0,1]))\]
and this finishes the proof.
\end{proof}

\subsection{A flexibility result for representations with one pair of non-trivial eigenvalue}
\label{sec:one-irrational-eigenvalue}

In this section, we will show that all free Bogoljubov crossed products associated with almost periodic orthogonal representations of $\ZZ$ with a single non-trivial irreducible component, which is faithful, are isomorphic.

\begin{proposition}
\label{prop:one-irrational-eigenvalue}
Let $\pi_i$ for $i \in \{1,2\}$ be almost periodic orthogonal representations of $\ZZ$ having the same dimension.  Assume that their complexifications $(\pi_i)_\CC$ each have a single pair of non-trivial eigenvalues $\lambda_i, \ol{\lambda_i} \in e^{2\pi i \RR \setminus \QQ}$ with any multiplicity.  Then $M_{\pi_1} \cong M_{\pi_2}$ by an isomorphism, which carries $A_{\pi_1}$ onto $A_{\pi_2}$.
\end{proposition}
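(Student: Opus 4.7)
The approach combines the amalgamated free product decomposition of Theorem \ref{thm:afp-decomposition-ap} with Dye's theorem on the orbit equivalence of free, ergodic, probability measure preserving $\ZZ$-actions. Set $d := \dim \pi_1 = \dim \pi_2$ and $m := d - 2$. The first step is to invoke Theorem \ref{thm:dependence-on-subgroup-only} to replace $\pi_i$ by the direct sum of a single copy of the irreducible two dimensional representation whose complexification has eigenvalues $\lambda_i, \ol{\lambda_i}$, with $m$ copies of the trivial representation. This substitution is legitimate because, regardless of the multiplicity of $\lambda_i, \ol{\lambda_i}$ in $\pi_i$, the new representation has dimension $d$ and its complexification has eigenvalues generating the very same concrete subgroup $\langle \lambda_i \rangle \subset \rS^1$.

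After this reduction, Theorem \ref{thm:afp-decomposition-ap} delivers
\[
  M_{\pi_i} \cong B *_{1 \ot \rL \ZZ} (\rL \ZZ \rtimes_{\lambda_i} \ZZ), \qquad B := \rL \freegrp{m+1} \vnt \rL \ZZ,
\]
with $A_{\pi_i}$ identified with the amalgam $1 \ot \rL \ZZ$. Crucially, the first factor $B$ is independent of $i$. Via the Fourier transform, the second factor becomes $\Linfty(\rS^1) \rtimes_{\lambda_i} \ZZ$, the group measure space construction associated to the free, ergodic, probability measure preserving action of $\ZZ$ on $\rS^1$ by rotation by the irrational number $\lambda_i$. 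By Dye's theorem these two actions are orbit equivalent, and by Singer's theorem the orbit equivalence induces a trace preserving $*$-isomorphism $\Phi: \rL \ZZ \rtimes_{\lambda_1} \ZZ \to \rL \ZZ \rtimes_{\lambda_2} \ZZ$ whose restriction to $\rL \ZZ$ is some automorphism $\theta \in \Aut(\rL \ZZ)$.

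Finally, extend $\theta$ to the automorphism $\Psi := \id_{\rL \freegrp{m+1}} \ot \, \theta$ of $B$, which clearly restricts to $\theta$ on $1 \ot \rL \ZZ$. Since $\Psi$ and $\Phi$ then agree on the amalgam, the universal property of the amalgamated free product combines them into a $*$-isomorphism $M_{\pi_1} \to M_{\pi_2}$ that restricts to $\theta$ on $A_{\pi_1} = \rL \ZZ$, hence carries $A_{\pi_1}$ onto $A_{\pi_2}$. The main delicate point is that Dye+Singer does not produce an isomorphism that is the identity on the amalgam, but only one twisted by some $\theta \in \Aut(\rL \ZZ)$; one must transfer this $\theta$ coherently across the other free product factor, which is made trivial by the tensor product structure of $B$. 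Once these ingredients are assembled the argument is routine.
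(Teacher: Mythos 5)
Your proposal is correct and follows essentially the same route as the paper: reduce via Theorem \ref{thm:dependence-on-subgroup-only} to multiplicity one, apply the amalgamated free product decomposition of Theorem \ref{thm:afp-decomposition-ap}, and use an orbit equivalence of the two irrational rotation actions (Dye/Singer) to identify the crossed product factors while preserving the amalgam globally. The only difference is that you spell out the final gluing step --- transporting the automorphism $\theta$ of the amalgam across the other free product factor via $\id \ot \theta$ and invoking the universal property --- which the paper compresses into ``this can be extended to an isomorphism.''
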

\begin{proof}
  By Theorem \ref{thm:dependence-on-subgroup-only} is suffices to consider the case where the eigenvalue $\lambda_i$ of $(\pi_i)_\CC$ has multiplicity one.  Theorem \ref{thm:afp-decomposition-ap} shows that
\[
  M_{\pi_1}
  \cong
  (\rL \freegrp{\dim \pi_1 - 1} \ot \Linfty(\rS^1)) *_{1 \ot \Linfty(\rS^1)} (\ZZ \ltimes_{\lambda_i} \Linfty(\rS^1))
  \eqcomma
\]
by an isomorphism, which caries $A_{\pi_i}$ onto $\Linfty(\rS^1)$.  Taking an orbit equivalence of the ergodic hyperfinite $II_1$ equivalence relations induced by $\ZZ \grpaction{\lambda_1} \rS^1$ and $\ZZ \grpaction{\lambda_2} \rS^1$, we obtain an isomorphism $\ZZ \ltimes_{\lambda_1} \Linfty(\rS^1) \cong \ZZ \ltimes_{\lambda_2} \Linfty(\rS^1)$, which preserves $\Linfty(\rS^1)$ globally.  This can be extended to an isomorphism $M_{\pi_1} \cong M_{\pi_2}$, which carries $A_{\pi_1}$ onto $A_{\pi_2}$.
\end{proof}

\begin{corollary}
\label{cor:faithful-two-dimensional}
  All faithful two dimensional representations of $\ZZ$ give rise to isomorphic free Bogoljubov crossed products.
\end{corollary}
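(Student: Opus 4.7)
The plan is to derive this as an essentially immediate consequence of Proposition \ref{prop:one-irrational-eigenvalue}. The only real content is to check that the hypothesis of that proposition applies to every faithful two-dimensional orthogonal representation of $\ZZ$.

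First I would classify faithful two-dimensional orthogonal representations of $\ZZ$. Such a representation $\pi$ is determined by the single orthogonal matrix $\pi(1) \in \rO(2)$, and faithfulness is equivalent to $\pi(1)$ having infinite order. Every element of $\rO(2) \setminus \rS\rO(2)$ is a reflection, hence of order $2$, so faithfulness forces $\pi(1) \in \rS\rO(2)$, i.e., $\pi(1)$ is a rotation by some angle $2\pi\theta$; infinite order is then equivalent to $\theta \in \RR \setminus \QQ$. Consequently the complexification $\pi_\CC$ has exactly two eigenvalues $e^{2\pi i \theta}$ and $e^{-2\pi i \theta}$, each of multiplicity one, and they form a single pair of non-trivial conjugate eigenvalues lying in $e^{2\pi i (\RR \setminus \QQ)}$.

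Next I would observe that for any two faithful two-dimensional orthogonal representations $\pi_1$ and $\pi_2$ of $\ZZ$, both have dimension $2$ and each of the complexifications has exactly a single pair of non-trivial eigenvalues in $e^{2\pi i (\RR \setminus \QQ)}$. Hence the pair $(\pi_1,\pi_2)$ satisfies the hypotheses of Proposition \ref{prop:one-irrational-eigenvalue} word for word. Applying that proposition directly yields an isomorphism $M_{\pi_1} \cong M_{\pi_2}$, which moreover sends $A_{\pi_1}$ onto $A_{\pi_2}$.

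There is no substantive obstacle here: the corollary is a tautological specialisation of Proposition \ref{prop:one-irrational-eigenvalue}, whose content (the orbit-equivalence of any two irrational rotations of $\rS^1$, combined with the amalgamated-free-product decomposition from Theorem \ref{thm:afp-decomposition-ap}) has already been carried out. The only thing to verify, which I have done above, is the elementary structural claim that in dimension two a faithful orthogonal representation of $\ZZ$ is automatically an irrational rotation.
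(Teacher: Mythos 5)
Your proposal is correct and follows exactly the route the paper intends: the corollary is stated without proof precisely because it is the specialisation of Proposition \ref{prop:one-irrational-eigenvalue} to two-dimensional representations, and your verification that a faithful two-dimensional orthogonal representation of $\ZZ$ is necessarily an irrational rotation (so its complexification has a single conjugate pair of non-trivial eigenvalues in $e^{2\pi i(\RR\setminus\QQ)}$, each of multiplicity one) is the only detail that needed checking.
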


\subsection{Some remarks on a possible classification of Bogoljubov crossed products associated with almost periodic orthogonal representations}
\label{sec:classificaton}

In Theorem \ref{thm:dependence-on-subgroup-only} we showed that the isomorphism class of free Bogoljubov crossed products associated with almost periodic orthogonal representations of $\ZZ$ depends at most on the concrete subgroup of $\rS^1$ generated by the eigenvalues of its complexification.  However, Theorem \ref{thm:classification-periodic} and Proposition \ref{prop:one-irrational-eigenvalue} both show that there are orthogonal representations $\pi, \rho$ of $\ZZ$ such that these subgroups of $\rS^1$ are not equal and still they give rise to isomorphic free Bogoljubov crossed products.  This answers a question of Shlyakhtenko, asking whether a complete invariant for the isomorphism class of the free Bogoljubov crossed products associated with an orthogonal representation $\pi$ of $\ZZ$ is $\oplus_{n \geq 1}\pi^{\otimes n}$ up to amplification.  By Theorem \ref{thm:classification-periodic}, the classification of free Bogoljubov crossed products associated with non-faithful orthogonal representations of $\ZZ$ is equivalent to the isomorphism problem for free group factors.  However, assuming that $M_\pi$ is a factor, i.e. that $\pi$ is faithful, the abstract isomorphism class of the group generated by the eigenvalues of the complexification of $\pi$ could be an invariant.  Due to the fact that the isomorphisms found in Theorem \ref{thm:classification-periodic} preserve the subalgebra $A_\pi \subset M_\pi$ for non-faithful orthogonal representations, we believe that this abstract group is indeed an invariant for infinite dimensional representations.

\begin{conjecture}
\label{conj:invariant-for-ap}
The abstract isomorphism class of the subgroup generated by the eigenvalues of the complexification of an infinite dimensional faithful almost periodic orthogonal representation of $\ZZ$ is a complete invariant for isomorphism of the associated free Bogoljubov crossed product.
\end{conjecture}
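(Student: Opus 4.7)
The plan is to attack the conjecture in two directions: a \emph{flexibility} direction ($K_\pi \cong K_\rho$ as abstract groups implies $M_\pi \cong M_\rho$) and a \emph{rigidity} direction ($M_\pi \cong M_\rho$ implies $K_\pi \cong K_\rho$ abstractly). Both appear substantially harder than the results already proven in the paper, and I would expect, if the conjecture is true at all, only partial progress in each direction at first. The forward direction is essentially a question in free probability, while the converse is a deformation/rigidity problem.

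For the \emph{flexibility} direction, the natural starting point is the cocycle crossed product presentation of Proposition \ref{prop:cocycle-crossed-product}, which writes $M_\pi \cong K \ltimes_\Omega (\rL G \vnt \Linfty(\rS^1))$ with $K \subset \rS^1$ acting on the base by rotation and on $G$ in a shift-like fashion. Given an abstract isomorphism $\phi \colon K_\pi \to K_\rho$ of two countable dense subgroups of $\rS^1$, the goal is to build an isomorphism $M_\pi \to M_\rho$ carrying the canonical copy of $K_\pi$ to $K_\rho$ via $\phi$. The two rotation actions $K_\pi \grpaction{} \rS^1$ and $K_\rho \grpaction{} \rS^1$ are generally not conjugate, but tensoring with the action on $\rL G$ provides enormous extra room, much in the spirit of Proposition \ref{prop:one-irrational-eigenvalue}, where an orbit equivalence of irrational rotations was used to absorb the change of angle. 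I would attempt to lift an abstract orbit equivalence between the two diagonal actions on $\rS^1 \times G$ to a von Neumann algebraic isomorphism, exploiting the absorption phenomena in \cite{dykemaredelmeier11} for tracial amalgamated free products over finite dimensional subalgebras together with the free-shift structure on $G$.

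For the \emph{rigidity} direction, the task is to produce an invariant of $M_\pi$ that remembers the abstract isomorphism class of $K$. The most promising avenue is the link between free Bogoljubov crossed products and cores of free Araki-Woods factors, for which Shlyakhtenko's classification \cite{shlyakhtenko97} recovers $K$ from the algebra in the almost periodic case. Transporting that invariant through the core construction, and then pinning down $A_\pi$ up to unitary conjugacy by combining Corollary \ref{cor:full-embedding} with the amalgamated free product techniques of Theorems \ref{thm:ipp} and \ref{thm:ioana-main}, one might hope to recover the spectrum of $\pi_\CC$ up to the correct equivalence, namely abstract isomorphism of $K$ rather than concrete embedding into $\rS^1$.

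The \emph{main obstacle} is the rigidity direction, which is why I would expect the statement to remain a conjecture for the time being. The flexibility side at least has a concrete model to manipulate, and there is precedent in free probability for collapsing apparently different presentations into a single algebra. On the rigidity side, however, one faces the full difficulty of distinguishing free Bogoljubov crossed products up to isomorphism, which Theorem \ref{thm:rigidity-result-introduction} addresses only for very restricted classes of representations and only by separating them via coarse bimodule-theoretic invariants over $A_\pi$. Designing a finer invariant that is sensitive to $K$ as an abstract group but completely blind to its embedding into $\rS^1$ appears to lie beyond the currently available tools, and I would regard any such invariant as the decisive new ingredient needed to settle the conjecture.
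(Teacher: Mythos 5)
The statement you were asked to prove is stated in the paper as a \emph{conjecture}: the paper offers no proof of it, only supporting evidence (Theorem \ref{thm:dependence-on-subgroup-only}, which shows the isomorphism class depends at most on the concrete subgroup $K \subset \rS^1$; Theorem \ref{thm:classification-periodic} and Proposition \ref{prop:one-irrational-eigenvalue}, which show some collapse across distinct concrete subgroups; and the observation that the isomorphisms in the periodic case preserve $A_\pi$). Your proposal correctly recognises this and does not claim a proof, so there is no gap to fault you for in that sense; your division into a flexibility direction and a rigidity direction, and your identification of the rigidity direction as the genuinely hard one, matches the paper's own assessment exactly. The flexibility strategy you sketch (manipulating the cocycle crossed product presentation of Proposition \ref{prop:cocycle-crossed-product} and absorbing the difference of rotation angles by orbit equivalence, as in Proposition \ref{prop:one-irrational-eigenvalue}) is precisely the mechanism the paper uses for its partial results.

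One concrete caveat on your rigidity plan. The suggestion to recover $K$ by transporting Shlyakhtenko's invariant for almost periodic free Araki-Woods factors through the core construction faces a structural obstruction: that invariant is Connes' $\mathrm{Sd}$ of a type III factor, extracted from modular theory, whereas $M_\pi$ is a tracial II$_1$ factor carrying no modular data, and there is no known functorial passage from $M_\pi$ back to a type III factor whose discrete decomposition would reveal $K$. The paper's own rigidity machinery (Theorem \ref{thm:intertwining}, Corollaries \ref{cor:isomorphic-bimodules} and \ref{cor:measure-class-equal}) only pins down $A_\pi$ up to a finite index bimodule and then extracts the measure class of $\sum_{n} \mu^{*n} * \delta_s$ restricted to corners, which for an atomic $\mu$ encodes the \emph{orbit equivalence relation} of the translation action of $K$ on $\rS^1$ rather than the abstract isomorphism class of $K$; indeed the proof of Theorem \ref{thm:rigidity-result} only uses the dichotomy ergodic versus non-ergodic. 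Bridging the gap between that concrete, embedding-sensitive invariant and the abstract group $K$ is exactly the missing ingredient, and your proposal does not supply it --- which is consistent with the statement remaining a conjecture.
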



\section{Solidity and strong solidity for free Bogoljubov crossed products}
\label{sec:solidity}

The proof of the following result can be extracted literally from the proof of \cite[Theorem 1]{shlyakhtenko03-multiplicity}.  It shows that the dimension of the almost periodic part of an orthogonal representation of $\ZZ$ is relevant for the isomorphism class of its free Bogoljubov crossed product.  We give a full prove for the convenience of the reader.  Recall that we denote by $\mathbb{1}$ the trivial orthogonal representation of the integers.

\begin{theorem}
\label{thm:flexibility-for-left-regular}
  The free Bogoljubov crossed products $M_\lambda$ and $M_{\lambda \oplus \mathbb{1}}$ are isomorphic to $\rL \freegrp{2}$.
\end{theorem}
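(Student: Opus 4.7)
The plan is to exhibit both crossed products as $\rL \freegrp{2}$ through the operator-valued semicircular machinery of Section~\ref{sec:operator-valued-semicirculars}, relying on the cited Example~3.3(a) of \cite{shlyakhtenko99}: whenever $A \cong \rL \ZZ$ and $X$ is an $A$-valued semicircular element with distribution $\eta = \tau: A \to \CC \subset A$, the algebra $\Wstar(X, A)$ is isomorphic to $\rL \freegrp{2}$, with $A$ sitting as one canonical generator.

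First I would handle $M_\lambda$. Set $A = A_\lambda = \rL \ZZ$ and take $X = s(\delta_0)$, where $\delta_0 \in \ltwo(\ZZ)$ is the delta at $0$. Because $u_n X u_{-n} = s(\lambda(n)\delta_0) = s(\delta_n)$, the element $X$ together with $A$ produces every $s(\delta_n)$, and these generate $\Gamma(\ltwo(\ZZ))''$, so $\Wstar(X, A) = M_\lambda$. To verify that $X$ is $A$-valued semicircular with distribution $\tau$, I would pass each $u_{k_j}$ appearing in $X u_{k_1} X \cdots u_{k_{n-1}} X$ through the $s(\delta_0)$'s via $u_k s(\delta_0) = s(\delta_k) u_k$, obtaining
\[
  s(\delta_0) u_{k_1} s(\delta_0) u_{k_2} \cdots u_{k_{n-1}} s(\delta_0)
  =
  s(\delta_{K_0}) s(\delta_{K_1}) \cdots s(\delta_{K_{n-1}}) u_{K_{n-1}}
\]
with $K_j = k_1 + \cdots + k_j$ and $K_0 = 0$. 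Applying $E_A$ and the Wick formula for the free Gaussian construction yields
\[
  E_A\bigl(X u_{k_1} X \cdots u_{k_{n-1}} X\bigr)
  =
  u_{K_{n-1}} \sum_{\pi \in \mathrm{NC}_2(n)} \prod_{\{i,j\} \in \pi} \delta_{K_i, K_j}.
\]
A pair-partition bookkeeping then matches this with the operator-valued moment formula for an $A$-valued semicircular with $\eta = \tau$, in which each non-crossing pair partition contributes a product of traces. Hence $M_\lambda = \Wstar(X, A) \cong \rL \freegrp{2}$.

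For $M_{\lambda \oplus \mathbb{1}}$ I would apply Remark~\ref{rem:decomposition-wm-ap} to write
\[
  M_{\lambda \oplus \mathbb{1}}
  \cong
  M_\lambda *_A M_\mathbb{1}
  \cong
  (A * B) *_A (A \vnt C),
\]
where $B = \rL \ZZ$ is the second canonical generator of $M_\lambda \cong A * B$ produced by the previous step, and $M_\mathbb{1} = \Gamma(\CC)'' \vnt \rL \ZZ = A \vnt C$ with $C = \rL \ZZ$ (because $\ZZ$ acts trivially on $\Gamma(\CC)''$). The crucial step is the identification
\[
  (A * B) *_A (A \vnt C) \cong (A \vnt C) * B,
\]
which I would verify by showing that $B$ is freely independent in the scalar sense from $A \vnt C$ inside the amalgamated free product. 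For $y \in (A \vnt C) \ominus \CC$, decompose $y = E_A(y) + (y - E_A(y))$ with $y - E_A(y) \in (A \vnt C) \ominus A$; combining the amalgamated freeness of $A * B$ and $A \vnt C$ over $A$, the freeness of $A$ and $B$ in $A * B$, and a cyclic trace argument reducing to test moments of the form $\tau(a \cdot x_1 y_1 \cdots x_n y_n)$ with $a \in A$, the alternating scalar moments in $B$ and $A \vnt C$ vanish. Since $A \vnt C$ is a diffuse abelian von Neumann algebra, hence isomorphic to $\rL \ZZ$, the right-hand side becomes $\rL \ZZ * \rL \ZZ = \rL \freegrp{2}$.

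The main obstacle is the amalgamated-free-product collapse $(A * B) *_A (A \vnt C) \cong (A \vnt C) * B$: here the amalgam $A = \rL \ZZ$ is not finite dimensional, so Theorem~\ref{thm:dykemaredelmeier} cannot be invoked, and one must argue directly with moments (or equivalently with universal properties). The underlying phenomenon is that the tensor-product shape $A \vnt C$ allows $C$ to \emph{absorb} the amalgamation, but turning this into a rigorous statement requires careful separation of $A$-valued and scalar-valued expectations. The remaining verification that $X = s(\delta_0)$ is $A$-semicircular with $\eta = \tau$ is combinatorially standard via the Wick formula; the only subtlety is tracking how the Haar-unitary factor $u_{K_{n-1}}$ interacts with the sum over non-crossing pair partitions.
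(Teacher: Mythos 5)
Your proof is correct, but it takes a genuinely different route from the paper's in both halves. For $M_\lambda$ the paper simply observes that the Bogoljubov action shifts the free semicircular family $s(\delta_n)$, so $M_\lambda \cong \rL\freegrp{\infty}\rtimes\ZZ \cong \rL\freegrp{2}$, and then \emph{cites} \cite{shlyakhtenko98-applications} for the identification $M_\lambda\cong\Phi(A,\tau)$; your Wick-formula verification that $X=s(\delta_0)$ is $A$-valued semicircular with covariance $\tau$ proves that cited fact directly, which is more work but self-contained, and your bookkeeping (matching $u_{K_{n-1}}\sum_{\pi\in \mathrm{NC}_2(n)}\prod\delta_{K_i,K_j}$ against the nested cumulant formula for $\eta=\tau$) does check out. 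The real divergence is in the second half: the paper keeps the operator-valued picture and shows that the same $X$ is $B$-valued semicircular with covariance $\tau_B=\tau\ot\tau$ over the enlarged abelian algebra $B=\rL\ZZ\vnt A$, by computing $\rE_B(Xb_1X\dotsm b_nX)=\rE_A(X\rE_A(b_1)X\dotsm\rE_A(b_n)X)$ and reading off the cumulants, so that $M_{\lambda\oplus\mathbb{1}}\cong\Phi(B,\tau_B)\cong\rL\freegrp{2}$ by the same Example 3.3(a). You instead exit the operator-valued framework and use the position of $A$ inside $M_\lambda\cong A*B$ together with the absorption identity $(A*B)*_A D\cong D*B$ for $A\subseteq D$. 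That identity — the step you flag as the ``main obstacle'' — is in fact a true and standard fact about amalgamated free products (it follows from exactly the moment argument you sketch: expand each $d\in D$ as $\rE_A(d)+(d-\rE_A(d))$, note that alternating words in $B\ominus\CC$ and trace-zero elements of $A$ lie in $\ker\rE_A$, and reduce to freeness with amalgamation over $A$), so there is no gap, only an unexecuted but routine verification. The trade-off is that the paper's argument stays entirely inside Shlyakhtenko's free Krieger algebra formalism and generalises immediately to adjoining any trivial summand, whereas yours replaces the second cumulant computation by a purely scalar freeness argument at the cost of proving the absorption lemma, which Theorem \ref{thm:dykemaredelmeier} indeed cannot supply here because the amalgam $\rL\ZZ$ is not finite dimensional.
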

\begin{proof}
  We have $M_\lambda \cong \rL \freegrp{\infty} \rtimes \ZZ$, where $\ZZ$ acts by shifting a free basis of $\freegrp{\infty}$, so $M_\lambda \cong \rL \freegrp{2}$.  Consider $M_{\lambda \oplus \mathbb{1}} \cong M_\lambda *_A (\rL \ZZ \vnt A)$.  Let $B = \rL \ZZ \vnt A$.  By \cite{shlyakhtenko98-applications}, we know that $M_{\lambda}$ is isomorphic to the free Krieger algebra $\Phi(A, \tau)$ for the completely positive map $\tau: A \ra \CC \subset A$.  Let $X \in M_{\lambda}$ be the $A$-valued semicircular variable coming from this isomorphism.  We show that $X$ is $B$-valued semicircular with distribution $\tau_B = \tau \ot \tau: B \ra \CC \subset B$.  Then it follows that $M_{\lambda \oplus \mathbb{1}} \cong \Phi(B, \tau_B) \cong \rL \freegrp{2}$.

From the definition of freeness with a amalgamation, we see that for all $b_1, \dotsc, b_n \in B$ we have
\[
  \rE_B(Xb_1X \dotsm b_nX)
  = 
  \rE_A(X\rE_A(b_1)X \dotsm \rE_A(b_n)X)
  =
  \rE_A(X (\id \ot \tau)(b_1) X \dotsm (\id \ot \tau)(b_n) X)
  \eqstop
\]
As a result, for the free cumulants of $c_{(X,B)}^{(n)}$ of $X$ with respect to $B$ can be expressed in terms of the free cumulants $c_{(X,A)}^{(n)}$ of $X$ with respect to $A$ as
\[
  c_{(X,B)}^{(n)}(b_1, \dotsc, b_n)
  =
  c_{(X,A)}^{(n)}((\id \ot \tau)(b_1), \dotsc, (\id \ot \tau)(b_n))
  =
  \begin{cases}
    (\tau \ot \tau)(b_1) & \text{ if } n = 1 \eqcomma\\
    0 & \text{ otherwise} \eqstop
  \end{cases}
\]
This shows that $X$ is $B$-valued semicircular with distribution $\tau_B = \tau \ot \tau: \rL \ZZ \vnt A \ra \CC \subset \rL \ZZ \vnt A$.  We have finished the proof.
\end{proof}

The fact that the left regular representation plus a trivial one dimensional representation gives rise to a strongly solid free Bogoljubov crossed product, triggered the following observation.
\begin{theorem}
\label{thm:free-bogoliubov-strong-solidity}
  Let $\pi$ be an orthogonal representation of $\ZZ$ that is the direct sum of a mixing representation and a representation of dimension at most one.  Then $M_\pi$ is strongly solid.
\end{theorem}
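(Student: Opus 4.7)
The plan is to split $\pi = \pi_m \oplus \pi_0$ into its mixing part and an at-most-one-dimensional summand, so that Remark~\ref{rem:decomposition-wm-ap} yields the amalgamated free product decomposition
\[
M_\pi \;\cong\; M_{\pi_m} *_{A_\pi} M_{\pi_0} \eqstop
\]
If $\pi_0 = 0$ the statement is the main strong solidity theorem of \cite{houdayershlyakhtenko11}. So I may assume $\dim \pi_0 = 1$, in which case $M_{\pi_0}$ equals $A_\pi \vnt A_\pi$ or $A_\pi \rtimes_{-1} \ZZ$ and is in particular amenable, while $M_{\pi_m}$ is strongly solid by the previously handled case.

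Fix a non-zero projection $p \in M_\pi$, a diffuse amenable von Neumann subalgebra $Q \subset pM_\pi p$, and let $P = \Norm_{pM_\pi p}(Q)''$. The goal is to show $P$ is amenable. The key preliminary observation is that if $Q \not\prec_{M_\pi} A_\pi$, then since $\ZZ$ is amenable, Theorem~\ref{thm:embedding-and-normaliser} directly yields amenability of $P$. The remaining task is therefore to handle the case $Q \prec_{M_\pi} A_\pi$.

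To deal with this case, I would follow Ioana's ultrapower strategy. First apply Lemma~\ref{lem:ioana-relative-commutant-in-ultrapower} to write $p = e + f$ with $e(P' \cap (pM_\pi p)^\omega)$ completely atomic and $f(P' \cap (pM_\pi p)^\omega)$ diffuse, so that amenability of $eP$ and $fP$ can be treated separately. On the atomic piece, a cut-down by minimal projections further reduces to $P' \cap (pM_\pi p)^\omega = \CC p$, where Theorem~\ref{thm:ioana-main} applied to the decomposition $M_\pi = M_{\pi_m} *_{A_\pi} M_{\pi_0}$ produces the trichotomy: (a) $Q \prec_{M_\pi} A_\pi$, (b) $P \prec_{M_\pi} M_{\pi_i}$ for some $i \in \{m,0\}$, or (c) $P$ is amenable relative to $A_\pi$. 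Option (c) gives amenability of $P$ immediately since $A_\pi$ is amenable. On the diffuse piece, Theorem~\ref{thm:ioana-control-relative-commutant} applied to the amalgamated free product either returns us to option (b), or else yields $Q' \cap (pM_\pi p)^\omega \prec_{M_\pi^\omega} A_\pi^\omega$; in the latter situation Theorem~\ref{thm:approximate-commutators-and-mixing-subaglebras} applied to the mixing inclusion $A_\pi \subset M_{\pi_m}$ (which is mixing by Proposition~\ref{prop:right-finite-bimodules}, since the almost periodic part of $\pi_m$ is trivial) promotes this to an intertwining $P \prec_{M_\pi} A_\pi$, whence amenability of $P$ follows from amenability of $A_\pi$.

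The main obstacle is option (b): converting an intertwining $P \prec_{M_\pi} M_{\pi_i}$ into genuine amenability of $P$. When $i = 0$ a transfer argument is needed to promote the corner embedding of $P$ into the amenable algebra $M_{\pi_0}$ into amenability of the whole of $P$; when $i = m$ one has to apply strong solidity of $M_{\pi_m}$ to the image of a suitable corner of $Q$ inside $M_{\pi_m}$ and then transport the resulting amenable normalizer back to $P$ using that $P = \Norm_{pM_\pi p}(Q)''$. Both steps are the careful bookkeeping with corners, partial isometries and intertwinings that is standard in the Popa--Ioana deformation/rigidity framework, and this is where the bulk of the technical work of the proof is expected to live.
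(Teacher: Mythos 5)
Your overall architecture --- the decomposition $M_\pi \cong M_{\pi_m} *_{A_\pi} M_{\pi_0}$, strong solidity of $M_{\pi_m}$ from \cite{houdayershlyakhtenko11}, and Ioana's ultrapower dichotomy via Lemma \ref{lem:ioana-relative-commutant-in-ultrapower}, Theorem \ref{thm:ioana-main} and Theorem \ref{thm:ioana-control-relative-commutant} --- is the same as the paper's, which deduces the statement from the more general Theorem \ref{thm:strong-solidity}. Your opening reduction via Theorem \ref{thm:embedding-and-normaliser} (disposing of $Q \not\prec_{M_\pi} A_\pi$ at once) is a legitimate shortcut that the paper cannot use at the level of generality of Theorem \ref{thm:strong-solidity}, and is fine here.

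There is, however, a genuine gap in how you close the branches that land back on $A_\pi$. You invoke Theorem \ref{thm:approximate-commutators-and-mixing-subaglebras} for the ``mixing inclusion $A_\pi \subset M_{\pi_m}$'' to upgrade $Q' \cap (pM_\pi p)^\omega \prec_{M_\pi^\omega} A_\pi^\omega$ to $P \prec_{M_\pi} A_\pi$. That theorem (and likewise Lemma \ref{lem:ioana-control-normalizer-mixing}) requires the mixing subalgebra to sit inside the \emph{ambient} algebra containing $P$, so you would need $A_\pi \subset M_\pi$ to be mixing --- and it is not once $\dim \pi_0 = 1$: for a unitary $y \in M_{\pi_0} \ominus A_\pi$ normalising $A_\pi$ (e.g.\ $y = 1 \ot u$ in $A_\pi \vnt \rL \ZZ$) one has $\|\rE_{A_\pi}(y\, x_n\, y^*)\|_2 = \|x_n\|_2$, which does not tend to $0$ for unitaries $x_n \to 0$ weakly. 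The same defect leaves branch (a) of the trichotomy, $Q \prec_{M_\pi} A_\pi$, unresolved --- and that is exactly the case your first reduction left you with, so the trichotomy returns you to your hypothesis without progress. The missing idea, which is the paper's key step, is to prove that the \emph{larger} inclusion $M_{\pi_0} \subset M_\pi$ is mixing (a short computation with $\rE_{A_\pi}$ reduces this to mixingness of $A_\pi \subset M_{\pi_m}$) and to run Lemma \ref{lem:ioana-control-normalizer-mixing} and Theorem \ref{thm:approximate-commutators-and-mixing-subaglebras} with $B = M_{\pi_0}$; the resulting intertwining $P \prec_{M_\pi} M_{\pi_0}$ still yields an amenable direct summand of $P$ because $M_{\pi_0}$ is amenable. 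Since every branch produces only an amenable direct summand or a corner statement, one must also set up the argument as a contradiction against a maximal projection $p \in \cZ(P)$ with $Pp$ having no amenable direct summand, rather than proving amenability of $P$ directly. Finally, your branch (b) with $i = m$ is only a promissory note; the paper closes it using Theorem \ref{thm:ipp} to force the support $v^*v$ into $M_{\pi_m}$, followed by a matrix amplification and strong solidity of amplifications of $M_{\pi_m}$.
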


This theorem follows from the next, more general, one. Its proof can be taken almost literally from \cite[Theorem 1.8]{ioana12}.  We include a proof for the convenience of the reader.

\begin{theorem}
\label{thm:strong-solidity}
  Let $A \subset N$ be a mixing inclusion of $A$ into a strongly solid, non-amenable, tracial von Neumann algebra.  Let $A \subset B$ an inclusion of $A$ into an amenable, tracial von Neumann algebra.  Then $M = N *_A B$ is strongly solid.
\end{theorem}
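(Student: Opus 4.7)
The plan is to take an arbitrary diffuse amenable subalgebra $Q \subset pMp$ with normaliser $P := \Norm_{pMp}(Q)''$ and prove that $P$ is amenable.

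First I would check that the inclusion $A \subset M$ is mixing, inherited from $A \subset N$: using the free product decomposition of $M$ over $A$ into $A$ plus alternating words in $N \ominus A$ and $B \ominus A$, together with the mixing hypothesis on $A \subset N$, one verifies that $\|\rE_A(y x_n z)\|_2 \to 0$ for any sequence $(x_n)_n$ in the unit ball of $A$ going weakly to zero and any $y, z \in M \ominus A$. Then apply Lemma \ref{lem:ioana-relative-commutant-in-ultrapower} to decompose $p = e + f$ with $e, f \in \cZ(P' \cap (pMp)^\omega) \cap \cZ(P' \cap pMp)$, so that $e(P' \cap (pMp)^\omega) = e(P' \cap pMp)$ is completely atomic while $f(P' \cap (pMp)^\omega)$ is diffuse. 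It suffices to show that $eP$ and $fP$ are separately amenable.

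For the $e$-part, cutting further by the minimal central projections of $e(P' \cap pMp)$ reduces to the situation where $P' \cap (pMp)^\omega = \CC p$; in particular $P$ is then a factor. Apply Theorem \ref{thm:ioana-main} to get a trichotomy. If $P$ is amenable relative to $A$, then since $A \subset B$ with $B$ amenable, $P$ is amenable. If $P \prec_M B$, then a corner of the factor $P$ embeds into the amenable algebra $B$ and hence $P$ is amenable. If $Q \prec_M A$, then Lemma \ref{lem:ioana-control-normalizer-mixing} combined with the mixingness of $A \subset M$ upgrades this to $P \prec_M A$, and the same argument gives amenability. The remaining alternative $P \prec_M N$ is the most delicate: a standard corner argument produces a non-zero partial isometry $v \in M$ with $v^*v \leq p$ and $q := vv^* \in N$ such that $vPv^* \subset qNq$ and $vQv^*$ is a diffuse subalgebra of $vPv^*$ normalised by $vPv^*$; strong solidity of $N$ applied to $\Norm_{qNq}(vQv^*)'' \supset vPv^*$ then yields amenability of $vPv^*$, which propagates to $P$ (a factor) and hence to $eP$.

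For the $f$-part, apply Theorem \ref{thm:ioana-control-relative-commutant} to $P \subset fMf$ to obtain another trichotomy. The cases $\Norm_{fMf}(P)'' \prec M_i$ are handled exactly as in the previous paragraph, possibly after a maximality argument on projections in $\cZ(P' \cap fMf)$; the case where $P e'$ is amenable relative to $A$ on some non-zero projection $e' \in \cZ(P' \cap fMf)$ spreads to amenability of all of $fP$ by taking $e'$ maximal with this property and deriving a contradiction otherwise. In the last case, $P' \cap (fMf)^\omega$ is diffuse and satisfies $P' \cap (fMf)^\omega \prec_{M^\omega} A^\omega$; Theorem \ref{thm:approximate-commutators-and-mixing-subaglebras}, invoked for the mixing inclusion $A \subset M$, then gives $P \prec_M A$, and amenability follows. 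The main obstacle throughout is the case $P \prec_M N$: one must transfer amenability from the corner $vPv^* \subset N$ obtained via strong solidity of $N$ back to the whole of $P$, which requires carefully exploiting factoriality where available and a maximality argument on central projections of $P' \cap pMp$ where it is not.
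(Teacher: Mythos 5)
Your overall architecture matches the paper's: decompose $p=e+f$ via Lemma \ref{lem:ioana-relative-commutant-in-ultrapower}, run Theorem \ref{thm:ioana-main} on the atomic part and Theorem \ref{thm:ioana-control-relative-commutant} on the diffuse part, and dispose of the surviving alternative $P\prec_M N$ using strong solidity of $N$. But there is a genuine error at the very first step: the inclusion $A\subset M$ is \emph{not} mixing in general, and your argument uses this false claim twice. Nothing is assumed about how $A$ sits inside $B$; take for instance $B=A\,\vnt\,\Linfty([0,1])$ and $y=z^*=1\ot g$ with $\tau(g)=0$, $\|g\|_2=1$. Then $\rE_A(y\,u_n\,z)=u_n$ for any sequence of unitaries $u_n\in A$ tending weakly to $0$, so $\|\rE_A(yu_nz)\|_2\equiv 1$. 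What the paper proves instead is that $B\subset M$ is mixing (this does follow from mixing of $A\subset N$ and the free product structure), and all the upgrades you want must be routed through $B$: from $Q\prec_M A$ one passes to $Q\prec_M B$ and then Lemma \ref{lem:ioana-control-normalizer-mixing} gives $P\prec_M B$ (amenable target, done); from $P'\cap(fMf)^\omega\prec_{M^\omega}A^\omega$ one passes to $\prec_{M^\omega}B^\omega$ and Theorem \ref{thm:approximate-commutators-and-mixing-subaglebras} gives $P\prec_M B$. As written, your invocations of these two results for the ``mixing inclusion $A\subset M$'' are unjustified.

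Two further points in the $P\prec_M N$ branch are glossed over but are where the actual work lies. First, the ``standard corner argument'' does not automatically give $vv^*\in N$ acting as you describe: from $P\prec_M N$ one gets $v$ and $\phi:p_0Pp_0\to qNq$ with $xv=v\phi(x)$, and one must show $v^*v\in N$ before conjugating a corner of $P$ into $N$; the paper obtains this from Theorem \ref{thm:ipp}, using that a corner of $P$ with no amenable direct summand cannot embed into $A$. Second, the transfer of amenability from the corner back to $P$, which you correctly flag as the main obstacle, is resolved in the paper not by propagating amenability forward but by a global contradiction: one fixes from the outset the maximal central projection $p$ with $Pp$ having no amenable direct summand, and after placing a corner inside $N$ one amplifies to a $*$-homomorphism $\psi:Pz\ra \Cmat{n}\ot\tilde pN\tilde p$ and uses that amplifications of strongly solid II$_1$ factors are strongly solid to contradict the choice of $p$. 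You should adopt this contradiction setup (or supply the missing maximality argument explicitly); with that and the correction from $A$-mixing to $B$-mixing, your proof becomes the paper's.
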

\begin{proof}
  We first show that $B \subset M$ is mixing.  As in \cite[Theorem 1.8]{ioana12}, we have to show that for every sequence $(b_n)_n$ in $(B)_1$ with $b_n \ra 0$ weakly and for all $a,b \in B$, $x, y  \in N \ominus A$ we have
\[\rE_A(x\rE_A(ab_nb)y) \stackrel{\twonorm}{\lra} 0 \eqstop \]
Since $b_n \ra 0$ weakly, also $\rE_A(ab_nb) \ra 0$ weakly.  The fact that $A \subset N$ is mixing, then implies that $\| \rE_A(x\rE_A(ab_nb)y) \|_2 \ra 0$.

Let $Q \subset M$ be a diffuse, amenable von Neumann subalgebra and write $P = \Norm_M(Q)''$.  Let $p \in \cZ(P)$ be the maximal projection such that $Pp$ has no amenable direct summand.  We assume $p \neq 0$ and deduce a contradiction.  Let $\omega$ be a non-principal ultrafilter.  By Theorem \ref{lem:ioana-relative-commutant-in-ultrapower} we have $p = e + f$ with $e,f \in \cZ((Pp)' \cap pMp) \cap \cZ((Pp)' \cap (pMp)^\omega$) such that
\begin{itemize}
\item $e((Pp)' \cap (pMp)^\omega) = e((Pp)' \cap pMp)$ and this algebra is atomic and
\item $f((Pp)' \cap (pMp)^\omega)$ is diffuse. 
\end{itemize}
Either $e \neq 0$ or $f \neq 0$.  In both cases, we will deduce that $Pp \prec_M N$.

If $e \neq 0$ let $e_0 \in (Pp)' \cap pMp$ be a minimal projection. Then $(Pe_0)' \cap (e_0Me_0)^\omega = \CC e_0$, so Theorem \ref{thm:ioana-main} applies to $Ae_0 \subset e_0Me_0$ and $Pe_0 \subset \Norm_{e_0Me_0}(Qe_0)''$.  We obtain that one of the following holds.
\begin{itemize}
\item $Qe_0 \prec_M A$,
\item $Pe_0 \prec_M N$,
\item $Pe_0 \prec_M B$ or
\item $Pe_0$ is amenable relative to $A$.
\end{itemize}
The first item implies that $Qe_0 \prec B$ and since $B \subset M$ is mixing, Lemma \ref{lem:ioana-control-normalizer-mixing} shows that $Pe_0 \prec B$.  So the first and the last two items imply that $Pe_0$ has an amenable direct summand, which contradicts the choice of $p$.  We obtain $Pp \prec_M N$ in the case $e \neq 0$.

If $f \neq 0$ then Theorem \ref{thm:ioana-control-relative-commutant} applied to $Pf \subset fMf$ shows that one of the following holds.
\begin{itemize}
\item $(Pf)' \cap (fMf)^\omega \prec_{M^\omega} A^\omega$,
\item $Pf \prec_M N$,
\item $Pf \prec_M B$ or
\item there is a non-zero projection $f_0 \in \cZ((Pf)' \cap fMf)$ such that $Pf_0$ is amenable relative to $A$. 
\end{itemize}
The first item implies $(Pf)' \cap (fMf)^\omega \prec_{M^\omega} B^\omega$ and since $B \subset M$ is mixing, Theorem \ref{thm:approximate-commutators-and-mixing-subaglebras} shows that $Pf \prec_M B$.  So the first and the last two items imply that $Pf$ has an amenable direct summand, contradicting the choice of $p$.  This shows $Pp \prec_M N$ in the case $f \neq 0$.

We showed $Pp \prec_M N$.  Let $p_0 \in P$, $q \in Q$, $p_0 \leq p$ be non-zero projections, $v \in pMq$ satisfying $vv^* = p_0$ and $\phi: p_0Pp_0 \ra qNq$ a *-homomorphism such that $xv = v\phi(x)$ for all $x \in p_0Pp_0$.  We have $v^*v \in \phi(p_0Pp_0)' \cap M$. Since $p_0 P p_0$ has no amenable direct summand it follows that $\phi(p_0Pp_0) \not \prec_M A$, and hence Theorem \ref{thm:ipp} shows that $v^*v \in N$.  So we can conjugate $P$ by a unitary in order to assume $p_0Pp_0 \subset N$.  Take partial isometries $w_1, \dotsc, w_n \in P$ such that $z = \sum_i w_iw_i^* \in \cZ(P)$ and $w_i^*w_i = \tilde{p} \leq p_0$ for all $i = 1, \dotsc, n$.  Then we obtain a *-homomorphism
\[
  \psi:
  Pz \ra \Cmat{n} \ot \tilde{p}N\tilde{p}:
  x \mapsto (w_i^* x w_j)_{i,j}
  \eqstop
\]
By \cite[Proposition 5.2]{houdayer10-non-free-gropup}, know that $\Cmat{n} \ot \tilde{p}N\tilde{p}$ is strongly solid.  This contradicts
\[
  \psi(Pz)
  \subset
  \Norm_{\Cmat{n} \ot \tilde{p}N\tilde{p}}(\psi(Az))''
\]
and the choice of $p$.
\end{proof}

\begin{proof}[Proof of Theorem \ref{thm:free-bogoliubov-strong-solidity}]
  Write $\pi = \pi_1 \oplus \pi_2$ with $\pi_1$ mixing and $\dim \pi_2 \leq 1$.  Then $M_\pi \cong M_{\pi_1} *_A M_{\pi_2}$.  Since $A \subset M_{\pi_1}$ is mixing by \cite[Theorem D.4]{vaes07}, it is strongly solid by \cite[Theorem B]{houdayershlyakhtenko11}.  Also $M_{\pi_2}$ is amenable, so Theorem \ref{thm:strong-solidity} applies.
\end{proof}

We have a partial converse to the previous theorem.
\begin{theorem}
\label{thm:non-solidity}
  Let $\pi$ be an orthogonal representation of $\ZZ$ with a rigid subspace of dimension at least two.  Then $M_\pi$ is not solid.
\end{theorem}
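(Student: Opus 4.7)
The strategy is to exhibit a diffuse subalgebra $B \subset M_\pi$ whose relative commutant $B' \cap M_\pi$ is non-amenable, which then disproves solidity. Let $K \leq H$ be a rigid subspace of dimension two with rigidity sequence $n_k \to \infty$ satisfying $\pi(n_k)|_K \to \id_K$ strongly. Fixing an orthonormal basis $\xi_1, \xi_2$ of $K$, the subalgebra $N := \Gamma(K)'' = \Wstar(s(\xi_1), s(\xi_2))$ sits inside $\Gamma(H)'' \subset M_\pi$, is isomorphic to $\rL \freegrp{2}$, and is therefore non-amenable. Rigidity of $K$ translates into $\alpha(n_k)(s(\xi_j)) = s(\pi(n_k)\xi_j) \to s(\xi_j)$ in $\|\cdot\|_2$, so the asymptotic commutation $\|[u_{n_k}, x]\|_2 \to 0$ holds for every $x \in N$.

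Decompose $\pi = \pi_{\mathrm{ap}} \oplus \pi_{\mathrm{wm}}$ and project $K$ onto $H_{\mathrm{ap}}$ and $H_{\mathrm{wm}}$; both projections inherit rigidity from $K$. In the main case, the almost periodic part contributes a two-dimensional rigid subspace, equivalently $\dim H_{\mathrm{ap}} \geq 2$. One then finds inside $H_{\mathrm{ap}}$ a $\pi$-invariant real subspace $V$ of dimension exactly two from the irreducible decomposition of $\pi_{\mathrm{ap}}$. The fixed subalgebra $\Gamma(V)''^{\alpha}$ under the restricted free Bogoljubov action sits inside $A_\pi' \cap M_\pi$ by construction, and a case analysis on the irreducible type of $V$ shows it is always non-amenable: when $V$ is the two-dimensional irreducible rotation with circular generator $c = s(\xi_1) + i s(\xi_2)$ having polar decomposition $c = u a$, the elements $\{u^k a u^{-k} : k \in \ZZ\}$ are freely independent and generate a copy of $\rL \freegrp{\infty}$ inside $\Gamma(V)''^{\alpha}$, while the trivial and reflection subcases are analogous and easier. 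This conclusion is already packaged in Corollary \ref{cor:normaliser-quasi-normaliser}: once $\dim H_{\mathrm{ap}} \geq 2$, the free group $G$ appearing there has rank at least two, so $A_\pi' \cap M_\pi = \rL G \vnt A_\pi$ is non-amenable, and $B := A_\pi$ is the desired diffuse witness.

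The main obstacle is the remaining case, in which $K$ lies essentially in the weakly mixing part and $\dim H_{\mathrm{ap}} \leq 1$. Here every convolution power of the atomless spectral measure of $\pi$ remains atomless, so $\Gamma(H)''^{\alpha}$ is scalar and $A_\pi' \cap M_\pi = A_\pi$ is amenable, ruling out the easy witness. Instead one exploits the ultrapower-level commutation $N \subset \{(u_{n_k})_k\}' \cap M_\pi^\omega$: because $n_k \to \infty$, one may arrange $(u_{n_k})_k$ so that it generates a diffuse abelian subalgebra of $A_\pi^\omega$, providing a diffuse commutant of the non-amenable $N$ at the ultrapower level. Transferring this ultrapower information back to $M_\pi$ itself to produce a diffuse subalgebra whose relative commutant in $M_\pi$ is non-amenable is the delicate part of the argument; it is handled by combining Popa's intertwining-by-bimodules with the Houdayer--Shlyakhtenko structural machinery (Theorem \ref{thm:embedding-and-normaliser} and Corollary \ref{cor:full-embedding}) to cut down from the asymptotic centraliser to an honest finite-dimensional-index inclusion, from which the required non-amenable relative commutant is extracted.
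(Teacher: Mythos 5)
Your argument splits into two cases, and only the first one closes. When $\dim H_{\mathrm{ap}} \geq 2$ your direct witness is correct: the fixed points $\{u^k a u^{-k}\}_{k \in \ZZ}$ of the Bogoljubov action are indeed a free family generating a copy of $\rL \freegrp{\infty}$ inside $A_\pi' \cap M_\pi$, so $A_\pi$ is a diffuse subalgebra with non-amenable relative commutant and $M_\pi$ is not solid. (Two caveats: the projections of $K$ onto $H_{\mathrm{ap}}$ and $H_{\mathrm{wm}}$ need not both be non-zero two-dimensional rigid subspaces, so the correct dichotomy is the one you actually use, $\dim H_{\mathrm{ap}} \geq 2$ versus $\dim H_{\mathrm{ap}} \leq 1$; and you should justify the rank of $G$ by your own fixed-point computation rather than by the literal formula $G = \freegrp{m} * \ker \pi$ in Proposition \ref{prop:relative-commutant-ap}, which read naively would give $G \cong \ZZ$ for an irrational rotation.)

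The genuine gap is your second case, $\dim H_{\mathrm{ap}} \leq 1$, which is essential since weakly mixing representations can perfectly well have rigid subspaces. There you correctly produce the non-amenable $N = \Gamma(K)'' \cong \rL \freegrp{2}$ with the diffuse commutant $\{[u_{n_k}]\}'' \subset N' \cap M_\pi^\omega$, but the step "transferring this ultrapower information back to $M_\pi$ by combining intertwining-by-bimodules with Theorem \ref{thm:embedding-and-normaliser} and Corollary \ref{cor:full-embedding}" is not an argument: those results control normalisers of subalgebras according to whether they embed into $\rL \ZZ$, and none of them converts a diffuse relative commutant in $M_\pi^\omega$ into a diffuse subalgebra of $M_\pi$ itself with non-amenable relative commutant. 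The missing idea is that no such transfer is needed: Proposition 7 of \cite{ozawa04-solid} states that in a solid II$_1$ factor, any subalgebra $P$ with $P' \cap M^\omega$ diffuse must be amenable. This is exactly how the paper proves the theorem, in one stroke and uniformly in both of your cases: $P = \{s(\xi), s(\eta)\}''$ is non-amenable, $[u_{n_k}]$ is a Haar unitary in $P' \cap A_\pi^\omega$, hence $M_\pi$ is not solid. If you insist on exhibiting the witness inside $M_\pi$ itself, you would in effect have to reprove Ozawa's proposition (a diagonal/maximality argument), which your sketch does not do.
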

\begin{proof}
    Let $\omega$ be a non-principal ultrafilter.  Let $\xi, \eta \in H$ be orthogonal vectors such that there is a sequence $(n_k)_k$ going to infinity in $\ZZ$ and $\pi(n_k) \xi \ra \xi$, $\pi(n_k) \eta \ra \eta$ if $k \ra \infty$.  Then $[u_{n_k}] \in A^\omega$ is a Haar unitary and hence $P = \{s(\xi), s(\eta)\}''$ is a non-amenable subalgebra such that $P' \cap A^\omega \subset P' \cap M_\pi^\omega$ is diffuse.  Applying \cite[Proposition 7]{ozawa04-solid} to $P \subset M_\pi$ shows that $M_\pi$ is not solid.
\end{proof}

We conjecture that the previous theorem is sharp.
\begin{conjecture}
\label{conj:solidity}
  Let $\pi$ be an orthogonal representation of $\ZZ$.  Then the following are equivalent.
  \begin{itemize}
  \item $M_\pi$ is strongly solid.
  \item $M_\pi$ is solid.
  \item $\pi$ has no rigid subspace of dimension two.
  \end{itemize}
\end{conjecture}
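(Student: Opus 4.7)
The direction (a) $\Rightarrow$ (b) is immediate from the definitions, and (b) $\Rightarrow$ (c) is the contrapositive of Theorem \ref{thm:non-solidity}: any two orthogonal rigid vectors $\xi,\eta$ yield free semicirculars $s(\xi), s(\eta)$ inside $M_\pi$ generating a non-amenable subalgebra whose relative commutant in $M_\pi^\omega$ contains the diffuse Haar unitary $[u_{n_k}] \in A_\pi^\omega$, forcing non-solidity by \cite[Proposition 7]{ozawa04-solid}. The substantive content of the conjecture is therefore the implication (c) $\Rightarrow$ (a).

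\textbf{Decomposition of $\pi$.} The first step is to use (c) to reduce $\pi$ to a tractable form. Any almost periodic subrepresentation of dimension two is rigid along multiples of a common period of its eigenvalues, so (c) immediately forces $\dim \pi_{\mathrm{ap}} \leq 1$. Analysing the spectral measure $\mu$ and multiplicity function $N$ of $\pi_\CC$ on $\rS^1$ more finely, the aim is to produce a decomposition $\pi = \pi_0 \oplus \pi_1$ with $\pi_0$ mildly mixing and $\dim \pi_1 \leq 1$. The key input is an upgrade of the previous observation to arbitrary rigid vectors: any two orthogonal rigid one-dimensional subspaces should admit a common rigid sequence, via a diagonal/Weyl-equidistribution argument on the joint spectral picture, and hence combine into a two-dimensional rigid subspace, so (c) rules out a rigid part of dimension $\geq 2$.

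\textbf{Reduction to strong solidity of a mildly mixing crossed product.} With such a decomposition, Remark \ref{rem:decomposition-wm-ap} gives
\[ M_\pi \cong M_{\pi_0} *_{A_\pi} M_{\pi_1}, \]
with $M_{\pi_1}$ amenable (it equals $A_\pi$, $A_\pi \vnt \rL \ZZ$ or $A_\pi \rtimes_{-1} \ZZ$). Strong solidity of $M_\pi$ would then follow from two ingredients: (i) $M_{\pi_0}$ is strongly solid whenever $\pi_0$ is mildly mixing, extending \cite[Theorem B]{houdayershlyakhtenko11} from the mixing to the mildly mixing case; and (ii) an amalgamated free product permanence in the spirit of Theorem \ref{thm:strong-solidity}, in which the mixing inclusion $A \subset N$ is weakened to a mildly mixing inclusion.

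\textbf{Main obstacle.} The crux is ingredient (i). The Houdayer--Shlyakhtenko proof combines the malleable deformation of \cite{popa06-non-commutative-bernoulli-shifts} with Popa's spectral gap rigidity and uses mixing of $A_\pi \subset M_\pi$ at multiple points, notably in Theorem \ref{thm:embedding-and-normaliser} and in the control of ultrapower relative commutants through Lemma \ref{lem:ioana-control-normalizer-mixing} and Theorem \ref{thm:approximate-commutators-and-mixing-subaglebras}. Under mild mixing alone, approximately central sequences in $A_\pi^\omega$ can act non-trivially on one-dimensional rigid pieces of $\Ltwo(M_{\pi_0}) \ominus \Ltwo(A_\pi)$; but (c) is precisely what prevents those pieces from combining into a non-amenable free obstruction analogous to the pair $s(\xi),s(\eta)$ in Theorem \ref{thm:non-solidity}. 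Turning this heuristic into a spectral gap estimate strong enough to salvage the Popa--Ozawa normaliser control under mild mixing alone is where the real technical difficulty lies; rigorously producing the decomposition in the previous step, especially identifying a common rigid sequence across all rigid vectors, is a secondary but still non-trivial obstacle.
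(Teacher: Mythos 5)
The statement you are addressing is Conjecture \ref{conj:solidity}; the paper does not prove it, and offers only the two partial results you correctly identify as the easy directions: (a) $\Rightarrow$ (b) is formal, (b) $\Rightarrow$ (c) is the contrapositive of Theorem \ref{thm:non-solidity}, and the only case of (c) $\Rightarrow$ (a) established in the paper is Theorem \ref{thm:free-bogoliubov-strong-solidity}, where $\pi$ is assumed to be mixing plus a summand of dimension at most one --- a strictly stronger hypothesis than the absence of a two-dimensional rigid subspace. So what you have written is a research plan rather than a proof, and you are candid about this; but two of its steps contain gaps that go beyond ``technical difficulty.''

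First, the decomposition step. Your claim that two orthogonal rigid one-dimensional subspaces ``should admit a common rigid sequence'' and hence assemble into a two-dimensional rigid subspace is not justified and is false in the generality you need: rigidity sequences of two singular continuous spectral measures on $\rS^1$ need not be compatible, so $\pi$ can have infinitely many one-dimensional rigid subspaces while still satisfying condition (c). Consequently you cannot conclude from (c) that $\pi = \pi_0 \oplus \pi_1$ with $\pi_0$ mildly mixing and $\dim \pi_1 \leq 1$; the ``rigid part'' of $\pi$ is not a subrepresentation in any obvious sense. (The almost periodic case is fine: any two-dimensional almost periodic subrepresentation is rigid along a sequence $n_k$ with $\lambda^{n_k} \to 1$, so (c) does force $\dim \pi_{\mathrm{ap}} \leq 1$.) Second, even granting the decomposition, both ingredients (i) and (ii) --- strong solidity of $M_{\pi_0}$ for mildly mixing $\pi_0$, and the amalgamated free product permanence with the mixing hypothesis on $A \subset N$ weakened to mild mixing --- are open statements, not extensions you can invoke. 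Lemma \ref{lem:ioana-control-normalizer-mixing} and Theorem \ref{thm:approximate-commutators-and-mixing-subaglebras} genuinely use mixing of the inclusion, and no substitute under mild mixing is provided here or in the paper. As it stands, your argument establishes only what the paper already proves, and the implication (c) $\Rightarrow$ (a) remains open.
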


The Theorems \ref{thm:free-bogoliubov-strong-solidity} and \ref{thm:non-solidity} of this work as well as Theorem A of \cite{houdayer12-structure} on free Bogoljubov crossed products that do not have property Gamma are supporting evidence for our conjecture.  We explain how Houdayer's result is related it.

\begin{theorem}[See Theorem A of \cite{houdayer12-structure}]
Let $G$ be a countable discrete group and $\pi: G \ra \cO(H)$ any faithful orthogonal representation such that $\dim H \geq 2$ and $\pi(G)$ is discrete in $\cO(H)$ with respect to the strong topology. Then $\Gamma(H)'' \rtimes_{\pi} G$ is a II$_1$ factor which does not have property Gamma.
\end{theorem}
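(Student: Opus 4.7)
Since $\pi$ is faithful and $\dim H \geq 2$, Corollary~\ref{cor:factorial} yields that $M_\pi = \Gamma(H)'' \rtimes_\pi G$ is a factor; it is of type II$_1$ because $\Gamma(H)'' \cong \rL \freegrp{\dim H}$ is a II$_1$ factor acted on trace-preservingly by $G$. To rule out property Gamma I fix a non-principal ultrafilter $\omega$ and aim to show $M_\pi' \cap M_\pi^\omega = \CC$.

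\textbf{Step 1 (central sequences land in $\rL G$).} The plan is to invoke Popa's free malleable deformation from \cite[Section~6]{popa06-non-commutative-bernoulli-shifts}. Inside the dilation $\tilde M = \Gamma(H \oplus H)'' \rtimes G$, with $G$ acting diagonally, the orthogonal rotation $\theta_t \in \cO(H \oplus H)$ by angle $t$ between the two copies of $H$ commutes with $\pi \oplus \pi$ and hence induces trace-preserving automorphisms $\alpha_t \in \Aut(\tilde M)$ that fix $\rL G$ pointwise and converge pointwise in $\|\cdot\|_2$ to $\id$ as $t \to 0$. Since $\Gamma(H)''$ is a non-amenable II$_1$ factor, the $\Gamma(H)''$-$\Gamma(H)''$-bimodule $\Ltwo(\tilde M) \ominus \Ltwo(M_\pi)$ is weakly contained in the coarse bimodule, and in particular has spectral gap. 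Applying Popa's transfer principle (spectral gap plus malleability, as in \cite{popa08-spectral-gap, houdayershlyakhtenko11, houdayer12-structure}) to any central sequence $(x_n) \in M_\pi' \cap M_\pi^\omega$ yields $\|x_n - \rE_{\rL G^\omega}(x_n)\|_2 \to 0$.

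\textbf{Step 2 (strong discreteness finishes the proof).} After replacing $(x_n)$ by its image under $\rE_{\rL G^\omega}$, assume $x_n \in \rL G$ with $\tau(x_n) = 0$ and $\|x_n\|_2 = 1$, and write $x_n = \sum_{g \in G} a_{n,g} u_g$ with $a_{n,e} = 0$ and $\sum_g |a_{n,g}|^2 = 1$. Using the identity $u_g s(\xi) u_g^* = s(\pi(g)\xi)$ and the orthogonality of the vectors $s(\eta) u_g$ for distinct $g$ in $\Ltwo(M_\pi)$, one computes
\[
  \|[s(\xi), x_n]\|_2^2
  \;=\;
  \Bigl\| \sum_g a_{n,g}\, s(\xi - \pi(g)\xi) u_g \Bigr\|_2^2
  \;=\;
  \sum_g |a_{n,g}|^2 \, \|\xi - \pi(g)\xi\|^2,
\]
so centrality forces $\lim_{n \to \omega} \sum_g |a_{n,g}|^2 \|\xi - \pi(g)\xi\|^2 = 0$ for every $\xi \in H$. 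Since $\pi(G)$ is strongly discrete in $\cO(H)$, there exist $\xi_1,\ldots,\xi_k \in H$ and $\epsilon > 0$ with
\[
  \pi(G) \cap \{T \in \cO(H) : \|T\xi_i - \xi_i\| < \epsilon \text{ for } i = 1,\ldots,k\} = \{\id\},
\]
so by faithfulness of $\pi$, for every $g \neq e$ one has $\sum_{i=1}^k \|\xi_i - \pi(g)\xi_i\|^2 \geq \epsilon^2$. Summing the previous displayed formula over $\xi = \xi_1, \ldots, \xi_k$ yields $\lim_{n \to \omega} \epsilon^2 \sum_{g \neq e} |a_{n,g}|^2 = 0$, hence $|a_{n,e}|^2 \to 1$; but $a_{n,e} = 0$, a contradiction.

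\textbf{Main obstacle.} The substantive step is Step~1: extracting a genuine localization into $\rL G^\omega$ from the deformation $(\alpha_t)$. One must verify that the spectral gap of $\Ltwo(\tilde M) \ominus \Ltwo(M_\pi)$ as an $M_\pi$-bimodule (or at least as an $\rL G$-bimodule) really does descend through Popa's transfer argument to push a central sequence into the fixed algebra of $\alpha_t$, which here is precisely $\rL G$. Step~2, by contrast, is a clean computational use of the strong-discreteness hypothesis, with faithfulness of $\pi$ entering only to ensure $\pi^{-1}(\id) = \{e\}$.
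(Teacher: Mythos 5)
This statement is quoted verbatim from Theorem~A of \cite{houdayer12-structure}; the paper you were given offers no proof of it (it is cited only as supporting evidence for Conjecture~\ref{conj:solidity}), so there is no internal argument to compare yours against. On its own merits, your Step~2 is correct and is precisely how the discreteness hypothesis has to enter: the identity $\|[s(\xi),x_n]\|_2^2=\sum_g|a_{n,g}|^2\|\xi-\pi(g)\xi\|^2$, together with a strong-topology neighbourhood $\{T:\|T\xi_i-\xi_i\|<\epsilon,\ i=1,\dotsc,k\}$ isolating $\id$ in $\pi(G)$, forces $\sum_{g\neq e}|a_{n,g}|^2\to0$ and contradicts $\tau(x_n)=0$. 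A minor quibble: for factoriality you cite Corollary~\ref{cor:factorial}, which is stated and proved only for $G=\ZZ$; for arbitrary countable $G$ one needs outerness of the free Bogoljubov automorphisms $\sigma_g$ for $g\notin\ker\pi$, which this paper does not supply.

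The genuine gap is Step~1, and you flagged it yourself. The machinery you invoke does not produce the conclusion that Step~2 consumes. With $N=\Gamma(H)''$ non-amenable and $\Ltwo(\tilde M)\ominus\Ltwo(M)$ a multiple of the coarse $N$-$N$-bimodule (where $\tilde M=\Gamma(H\oplus H)''\rtimes G$), spectral gap yields an estimate of the form $\lim_{n\to\omega}\|\alpha_t(x_n)-\rE_M(\alpha_t(x_n))\|_2\leq C\sum_{a\in F}\|\alpha_{-t}(a)-\rE_M(\alpha_{-t}(a))\|_2$, and the right-hand side is of order $t$ as $t\to0$ (for $a$ in the $k$-th chaos it equals $(1-\cos^{2k}(\pi t/2))^{1/2}\|a\|_2$). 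Popa's transversality then gives $\lim_{n\to\omega}\|\alpha_{2t}(x_n)-x_n\|_2=O(t)$. But the only lower bound tying the deformation to the distance from $\rL G$ is $\|\alpha_{2t}(x)-x\|_2^2\geq 2(1-\cos(\pi t))\|x-\rE_{\rL G}(x)\|_2^2$, whose constant is likewise of order $t^2$; the two estimates cancel and yield only $\lim_{n\to\omega}\|x_n-\rE_{\rL G}(x_n)\|_2=O(1)$. This is exactly why the spectral gap/malleability scheme of \cite{popa08-spectral-gap}, as used in Theorem~\ref{thm:embedding-and-normaliser}, terminates in an intertwining statement $P\prec_M\rL G$ (obtained by patching at a fixed small $t$ and doubling via the graded symmetry) rather than in the exact $\|\cdot\|_2$-localization $\|x_n-\rE_{\rL G}(x_n)\|_2\to0$ that your Fourier-coefficient argument requires. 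Bridging this is the real content of Houdayer's proof --- an asymptotic orthogonality property for central sequences of $\Gamma(H)''\rtimes G$ relative to $\rL G$, established by a direct Fock-space computation --- and it is missing from your proposal.
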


First of all, note that in view of Proposition 7 of \cite{ozawa04-solid}, being non-Gamma can be considered as a weak form of solidity.  Secondly, we remark that an orthogonal representation $\pi:G \ra \cO(H)$ has discrete range, if and only if the whole Hilbert space $H$ is not rigid in our terminology.  This explains the link between our conjecture and the result of Houdayer.


\section{Rigidity results}
\label{sec:rigidity}

In this section, we want to show how to extract some information about $\pi$ from the von Neumann algebra $M_\pi$. As an application, we exhibit orthogonal representations of $\ZZ$ that cannot give rise to isomorphic free Bogoljubov crossed products.

\begin{theorem}
\label{thm:intertwining}
Let $\pi_1$, $\pi_2$ be orthogonal representations of $\ZZ$ such that each of them has a finite dimensional invariant subspace of dimension $2$. Assume that $M = M_{\pi_1} \cong M_{\pi_2}$. Let $A = A_{\pi_1}$ and identify $A_{\pi_2}$ with a subalgebra $B \subset M$. Then there is a finite index $A$-$B$-subbimodule of $\Ltwo(M)$.
\end{theorem}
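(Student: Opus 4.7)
The plan is to verify the three hypotheses of Vaes' Proposition~\ref{prop:fi-bimodule} for the pair $A = A_{\pi_1}$ and $B = A_{\pi_2}$ sitting inside $M$: namely, $A \prec_M B$, $B \prec_M^{\rmf} A$, and that every right finite $A$-$A$-subbimodule of $\Ltwo(M)$ is contained in $\Ltwo(\QN_M(A)'')$. Once these are established, the proposition immediately produces the desired finite index bimodule.

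The first preliminary observation is that every finite dimensional orthogonal representation of $\ZZ$ is almost periodic, so the hypothesis that each $\pi_i$ has a two-dimensional invariant subspace forces $\dim \pi_{i,\mathrm{ap}} \geq 2$ for $i = 1,2$. I would then identify, via Corollary~\ref{cor:normaliser-quasi-normaliser}, the normaliser and quasi-normaliser of $A_{\pi_i}$ in $M_{\pi_i}$ as $M_{\pi_{i,\mathrm{ap}}}$. The key point is that in both cases $M_{\pi_{i,\mathrm{ap}}}$ has no amenable direct summand: if $\pi_{i,\mathrm{ap}}$ is faithful, then Corollary~\ref{cor:factorial-ap} combined with Theorem~\ref{thm:afp-decomposition-ap} shows $M_{\pi_{i,\mathrm{ap}}}$ is a non-amenable factor; if $\pi_{i,\mathrm{ap}}$ is periodic, then Theorem~\ref{thm:classification-periodic} gives $M_{\pi_{i,\mathrm{ap}}} \cong \rL \freegrp{r} \vnt \Linfty([0,1])$ with $r = 1 + (\dim \pi_{i,\mathrm{ap}} - 1)/T > 1$, a continuous field of non-amenable factors.

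With these facts in hand, I would apply Corollary~\ref{cor:full-embedding} twice, exploiting that $\ZZ$ is amenable. Viewing $M$ through its $M_{\pi_1}$-structure, the normaliser of $B$ in $M$ has no amenable direct summand, hence $B \prec_M^{\rmf} A$. Symmetrically, viewing $M$ through its $M_{\pi_2}$-structure, the normaliser of $A$ in $M$ has no amenable direct summand, hence $A \prec_M^{\rmf} B$; in particular $A \prec_M B$. The third condition of Proposition~\ref{prop:fi-bimodule} is then Proposition~\ref{prop:right-finite-bimodules} together with the identification $\QN_M(A)'' = M_{\pi_{1,\mathrm{ap}}}$ from Corollary~\ref{cor:normaliser-quasi-normaliser}.

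The argument is largely an assembly of the right ingredients; the only step that deserves a careful check is that $M_{\pi_{i,\mathrm{ap}}}$ has no amenable direct summand even in the borderline periodic cases such as $\pi_{i,\mathrm{ap}} = \mathbb{1} \oplus (-1)$, but this is covered uniformly by the explicit parameter $r = 1 + (\dim \pi_{i,\mathrm{ap}} - 1)/T > 1$ produced by Theorem~\ref{thm:classification-periodic}.
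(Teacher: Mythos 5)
Your proposal is correct and follows essentially the same route as the paper: verify the three hypotheses of Proposition \ref{prop:fi-bimodule} using Corollary \ref{cor:normaliser-quasi-normaliser}, Corollary \ref{cor:full-embedding}, and Proposition \ref{prop:right-finite-bimodules}. The only difference is that you spell out why $M_{\pi_{i,\mathrm{ap}}}$ has no amenable direct summand (via Theorem \ref{thm:afp-decomposition-ap} and Theorem \ref{thm:classification-periodic}), a point the paper's proof passes over by simply asserting non-amenability of the normalisers; this extra care is welcome but does not change the argument.
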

\begin{proof}
  We want to use Theorem \ref{prop:fi-bimodule} in order to find a finite index $A$-$B$ bimodule in $\Ltwo(M)$.  So we have to verify its assumptions.  Corollary \ref{cor:normaliser-quasi-normaliser} implies that the normalisers of $A$ and $B$ are non-amenable.  So by Corollary \ref{cor:full-embedding}, $A \prec_M^\rmf B$ and $B \prec_M^\rmf A$ hold.  By Proposition \ref{prop:right-finite-bimodules}, every right finite $A$-$A$ subbimodule of $\Ltwo(M)$ lies in $\Ltwo(\QN_M(A)'')$.  So Theorem \ref{prop:fi-bimodule} says that there is a finite index $A$-$B$-subbimodule of $\Ltwo(M)$.
\end{proof}

\begin{corollary}
\label{cor:isomorphic-bimodules}
  Let $\pi_1, \pi_2$ be two orthogonal representations of $\ZZ$ having a finite dimensional subrepresentation of dimension at least $2$. Let $A_1 \subset M_1$ and $A_2 \subset M_2$ be the inclusions of the free Bogoljubov crossed products associated with $\pi_1$ and $\pi_2$, respectively.  Assume that $M_1 \cong M_2$.  Then there are projections $p_1 \in A_1$, $p_2 \in A_2$ and an isomorphism $\phi:A_1p_1 \ra A_2p_2$ preserving the normalised traces such that the bimodules $\bim{A_1p_1}{(p_1\Ltwo(M)p_1)}{A_1p_1}$ and $\bim{\phi(A_1p_1)}{(p_2\Ltwo(M)p_2)}{\phi(A_1p_1)}$ are isomorphic.
\end{corollary}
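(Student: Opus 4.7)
The plan is to deduce this from Theorem~\ref{thm:intertwining} together with the structural description of finite-index bimodules between abelian von Neumann algebras recalled in Section~\ref{sec:bimodules}. Fix an isomorphism $M_1 \cong M_2 =: M$, write $A = A_1$ and let $B \subset M$ denote the image of $A_2$ under this identification. The hypothesis that both $\pi_1$ and $\pi_2$ admit a finite-dimensional invariant subspace of dimension at least two is exactly what Theorem~\ref{thm:intertwining} requires, and applying it produces a finite-index $A$-$B$-subbimodule $\cK \subset \Ltwo(M)$.

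The next step is to extract from $\cK$ the concrete intertwining data: a partial isometry $w \in M$ together with projections $p_1 \in A$, $p_2 \in B$, and an isomorphism $\phi : p_1 A \to p_2 B$ satisfying $ww^* = p_1$, $w^*w = p_2$ and $aw = w\phi(a)$ for all $a \in p_1 A$. The structure theorem of Section~\ref{sec:bimodules} provides a first partial isometry $v_0 \in pMq$ and an isomorphism $\phi_0: pA \to qB$ with $av_0 = v_0 \phi_0(a)$ on some smaller corners, but with $v_0 v_0^*$ and $v_0^* v_0$ only in the commutants $A' \cap M$ and $B' \cap M$. Using the spectral description of bimodules over abelian algebras from Section~\ref{sec:measure-from-bimodule}, the finite-index bimodule $\cK$ decomposes into cyclic $A$-$B$-sub-bimodules of multiplicity one, each corresponding to a measurable bijection between subsets of the spectra of $A$ and $B$. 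Such a multiplicity-one cyclic piece is spanned by a partial isometry whose source and range projections lie in $B$ and $A$ respectively. A standard Zorn argument then assembles a maximal family of pairwise orthogonal such cyclic pieces $(v_i, e_i, f_i, \phi_i)_i$ with $v_iv_i^* = e_i \in A$, $v_i^* v_i = f_i \in B$ and $\phi_i: e_i A \to f_i B$ intertwined by $v_i$; setting $w = \sum_i v_i$, $p_1 = \sum_i e_i$, $p_2 = \sum_i f_i$ and $\phi = \bigoplus_i \phi_i$ yields the required data.

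With $w$ and $\phi$ in hand, define $U: p_1 \Ltwo(M) p_1 \to p_2 \Ltwo(M) p_2$ by $U(\xi) = w^* \xi w$. Because $ww^* = p_1$ and $w^*w = p_2$, the map $U$ is a bijective isometry with inverse $\eta \mapsto w \eta w^*$. The relation $aw = w\phi(a)$ and its adjoint $w^* a = \phi(a) w^*$ give $U(a \xi b) = \phi(a) U(\xi) \phi(b)$ for all $a, b \in p_1 A$ and $\xi \in p_1 \Ltwo(M) p_1$, which is the required bimodule isomorphism. Trace preservation of $\phi$ follows from $\tau(\phi(a)) = \tau(w^*aw) = \tau(aww^*) = \tau(a p_1) = \tau(a)$ for $a \in p_1 A$, combined with $\tau(p_1) = \tau(p_2)$ by Murray--von Neumann equivalence through $w$; hence the normalised traces are preserved as well.

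The main technical obstacle is the assembly step, namely upgrading the single partial isometry $v_0$ produced by the abstract structure theorem, whose range and source projections only lie in the commutants rather than in $A$ and $B$, to a partial isometry $w$ with $ww^* \in A$ and $w^*w \in B$. This is precisely where the spectral decomposition of $\cK$ as a bimodule over abelian algebras enters: only the multiplicity-one cyclic sub-bimodules are carried by partial isometries of the required type, and the finite multiplicity of $\cK$ $\nu$-almost everywhere is what makes Zorn's lemma yield a family whose projection sums are non-zero. The remainder of the proof is formal manipulation of the intertwining relation $aw = w\phi(a)$.
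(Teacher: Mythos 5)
Your overall strategy (apply Theorem \ref{thm:intertwining}, extract an intertwining partial isometry, and conjugate) starts out the same way as the paper, but there is a genuine gap at exactly the point you identify as the ``main technical obstacle,'' and your proposed resolution of it is incorrect. The claim that a multiplicity-one cyclic $A$-$B$-subbimodule of $\Ltwo(M)$ is spanned by a partial isometry $w$ with $ww^* \in A$ and $w^*w \in B$ is false in general. If $\xi$ generates such a piece and satisfies $a\xi = \xi\phi(a)$ for $a \in p_1A$, then the polar decomposition $\xi = w|\xi|$ only gives $ww^* \in (p_1A)' \cap p_1Mp_1$ and $w^*w \in (\phi(p_1A))' \cap p_2Mp_2$: the relation $u\,l(\xi)\,u^* = l(\xi)$ for unitaries $u \in p_1A$ places the left support in the relative commutant, nothing more. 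In the present situation Corollary \ref{cor:normaliser-quasi-normaliser} identifies $A' \cap M$ with $\rL G \vnt A$ for a free group $G$, which is strictly larger than $A$, and a projection $q_1$ in this algebra is in general not Murray--von Neumann equivalent inside $A' \cap M$ to any projection of $A$ (its $A$-valued centre-valued trace need not be a projection). The joint spectral decomposition of $\cK$ over the two abelian algebras, which you invoke, only sees the measure $\nu$ on $\rS^1 \times \rS^1$ and says nothing about how the generating vectors sit inside $M$; so the Zorn assembly cannot produce the $w$ you need, and the subsequent conjugation argument and the trace computation $\tau(\phi(a)) = \tau(w^*aw)$ have nothing to rest on.

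The paper resolves this differently: it keeps the support projections $q_1 = vv^*$ and $q_2 = v^*v$ in the relative commutants, cuts $p_1, p_2$ so that $q_i$ has full central support, and then proves
\[
  \bim{Ap_1}{(q_1 \Ltwo(M) q_1)}{Ap_1}
  \cong
  \bim{Ap_1}{(p_1 \Ltwo(M) p_1)}{Ap_1}
\]
by covering $p_1$ with projections $e_n \in A$ over which $e_n$ is dominated by finitely many copies of $q_1$ in $A' \cap M$, and then using that the multiplicity function of $\bim{A}{\Ltwo(M)}{A}$ is constantly $\infty$ (Proposition \ref{prop:spectral-decomposition-FBCP}, which is where the hypothesis of a two-dimensional invariant subspace re-enters) to absorb the resulting finite direct sums. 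Your proposal never invokes Proposition \ref{prop:spectral-decomposition-FBCP}, and that omission is the symptom of the gap: without the infinite-multiplicity absorption step there is no way to pass from the corner $q_1\Ltwo(M)q_1$ determined by the intertwiner to the corner $p_1\Ltwo(M)p_1$ determined by a projection of $A$, which is what the statement requires.
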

\begin{proof}
 By Theorem \ref{thm:intertwining}, there are projections $p_1 \in A_1$, $p_2 \in A_2$, an isomorphism $\phi:A_1p_1 \ra A_2p_2$ and a partial isometry $v \in p_1 M p_2$ such that $av = v\phi(a)$ for all $a \in A_1p_1$. Denote by $q_1$ and $q_2$ the left and right support of $v$, respectively. Cutting down $p_1$ and $p_2$, we can assume that $\supp \rE_{A_1}(q_1) = p_1$ and  $\supp \rE_{A_2}(q_2) = p_2$.  The bimodules $\bim{A_1p_1}{(q_1\Ltwo(M)q_1)}{A_1p_1}$ and $\bim{\phi(A_2p_2)}{(q_2\Ltwo(M)q_2)}{\phi(A_2p_2)}$ are isomorphic.

Since $p_1$ is the central support of $q_1$ in $A' \cap M$, there are projections $e_n \in A$, $n \in \NN$ such that $q_1 = \sum_n e_n$ and partial isometries $v_n^k \in A' \cap M$, $n \in \NN$, $k \leq n$ such that $\sum_k v_n^k(v_n^k)^* = e_n$ and $(v_n^1)^*v_n^1 = e_n q_1$, $(v_n^k)^*v_n^k \leq q_1$, for all $n$ and all $2 \leq k \leq n$.  Since the multiplicity function of $\bim{A_1}{\Ltwo(M)}{A_1}$ is constantly equal to infinity by Proposition \ref{prop:spectral-decomposition-FBCP}, we find that
\[
  \bim{Ae_n}{(e_nq_1 \Ltwo(M) e_n q_1)}{Ae_n}
  \cong
  \bigoplus_{k \leq n}
    \bim{Ae_n}{(v_n^k \Ltwo(M) (v_n^k)^*)}{Ae_n}
  \cong
  \bim{Ae_n}{(e_n \Ltwo(M) e_n)}{Ae_n}
  \eqcomma
\]
for all $n$.  So also
\[
  \bim{Ap_1}{(p_1 \Ltwo(M) p_1)}{Ap_1}
  \cong
  \bim{Ap_1}{(q_1 \Ltwo(M) q_1)}{Ap_1}
  \eqstop
\]
Similarly, we have  $\bim{A_2p_2}{(p_2\Ltwo(M)p_2)}{A_2p_2} \cong \bim{A_2p_2}{(q_2\Ltwo(M)q_2)}{A_2p_2}$.  This finishes the proof.
\end{proof}

A measure theoretic reformulation of Corollary \ref{cor:isomorphic-bimodules} can be given as follows.

\begin{corollary}
\label{cor:measure-class-equal}
  Let $(\mu_1, N_1), (\mu_2, N_2)$ be symmetric probability measures with multiplicity function on $\rS^1$ such that both have at least $2$ atoms when counted with multiplicity.  For $i = 1,2$, let $\pi_i$ be the orthogonal representation of $\ZZ$ by multiplication with $\id_{\rS^1}$ on $\Ltwo_\RR(\rS^1, \mu_i, N_i)$.  If $M_{\pi_1} \cong M_{\pi_2}$, then there are Lebesgue non-negligible Borel subsets $B_1, B_2 \subset \rS^1$ and a Borel isomorphism $\varphi:B_1 \ra B_2$  preserving the normalised Lebesgue measures such that
\[\varphi_* 
  \left ( [\sum_{n \geq 0} \mu_1^{* n} * \delta_{\varphi(s)}]|_{B_1} \right ) 
  =
  [\sum_{n \geq 0} \mu_2^{* n} * \delta_s]|_{B_2}
\eqstop\]
for Lebesgue almost every $s \in B_2$
\end{corollary}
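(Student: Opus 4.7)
The plan is to deduce the corollary from Corollary \ref{cor:isomorphic-bimodules} by unwinding the spectral-invariant description of bimodules over abelian von Neumann algebras from Section \ref{sec:measure-from-bimodule}. Since each pair $(\mu_i, N_i)$ has at least two atoms when counted with multiplicity, the orthogonal representation $\pi_i$ has a finite dimensional invariant subspace of dimension at least two, so Corollary \ref{cor:isomorphic-bimodules} applies and produces projections $p_i \in A_{\pi_i}$, a trace preserving isomorphism $\phi \colon A_{\pi_1} p_1 \to A_{\pi_2} p_2$, and an isomorphism of bimodules $\bim{A_{\pi_1} p_1}{(p_1 \Ltwo(M_{\pi_1}) p_1)}{A_{\pi_1} p_1} \cong \bim{\phi(A_{\pi_1} p_1)}{(p_2 \Ltwo(M_{\pi_2}) p_2)}{\phi(A_{\pi_1} p_1)}$. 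Under the Fourier identification $A_{\pi_i} \cong \Linfty(\rS^1)$, these data translate into Lebesgue non-negligible Borel sets $B_i \subset \rS^1$ (corresponding to $p_i$) and a Lebesgue preserving Borel isomorphism $\varphi \colon B_1 \to B_2$ (corresponding to $\phi$), and the claim is that the disintegrations of the induced spectral measures are intertwined by $\varphi$.

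Next I will compute the spectral invariant of the full bimodule $\bim{A_{\pi_i}}{\Ltwo(M_{\pi_i})}{A_{\pi_i}}$ by decomposing it as $\Ltwo(A_{\pi_i}) \oplus (\Ltwo(M_{\pi_i}) \ominus \Ltwo(A_{\pi_i}))$. The first summand is the trivial $A_{\pi_i}$-$A_{\pi_i}$-bimodule, whose associated measure on $\rS^1 \times \rS^1$ is supported on the diagonal and whose disintegration over the first coordinate against Lebesgue measure is the constant field $s \mapsto \delta_s$. For the second summand, the representation of $\ZZ$ on $\Ltwo(\Gamma(H_{\pi_i})'') \ominus \CC 1$ is $\bigoplus_{n \geq 1} \pi_i^{\ot n}$, which has spectral measure $\sum_{n \geq 1} \mu_i^{* n}$; Proposition \ref{prop:disintegration-spectral-measure-FBCP} then gives the disintegration $s \mapsto [\sum_{n \geq 1} \mu_i^{* n} * \delta_s]$. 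Summing these two contributions yields the disintegration $s \mapsto [\sum_{n \geq 0} \mu_i^{* n} * \delta_s]$ for the full bimodule (using $\mu_i^{*0} = \delta_1$ to absorb the diagonal summand into the $n = 0$ term). By Proposition \ref{prop:spectral-decomposition-FBCP}, the multiplicity function is equal to $\infty$ almost everywhere, so by the conjugacy classification recalled in Section \ref{sec:measure-from-bimodule} the measure class alone is a complete invariant of the bimodule.

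Finally, cutting by $p_i$ restricts the spectral invariant to $B_i \times B_i$, producing fiber measures $[\sum_{n \geq 0} \mu_i^{* n} * \delta_s]|_{B_i}$ for $s \in B_i$. The bimodule isomorphism from the first paragraph, interpreted through this spectral classification, asserts exactly that $(\varphi \times \varphi)_*$ identifies the measure class on $B_1 \times B_1$ with the measure class on $B_2 \times B_2$; uniqueness of the disintegration over the first coordinate, combined with the fact that $\varphi$ is Lebesgue preserving, then yields the pointwise identity $\varphi_*([\sum_{n \geq 0} \mu_1^{* n} * \delta_s]|_{B_1}) = [\sum_{n \geq 0} \mu_2^{* n} * \delta_{\varphi(s)}]|_{B_2}$ for Lebesgue almost every $s \in B_1$, which is the content of the corollary up to cosmetic reparametrisation by $B_2$ via $\varphi^{-1}$. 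The main technical obstacle will be the careful bookkeeping required to separate the diagonal $\Ltwo(A_{\pi_i})$ contribution from the higher tensor powers and to confirm that the multiplicity functions match across the isomorphism; both are handled by Propositions \ref{prop:disintegration-spectral-measure-FBCP} and \ref{prop:spectral-decomposition-FBCP}, so no substantive difficulty beyond notational care remains.
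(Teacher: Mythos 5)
Your argument is correct and follows essentially the same route as the paper: apply Corollary \ref{cor:isomorphic-bimodules} to get the cut-down bimodule isomorphism, translate the projections and the trace-preserving isomorphism into the sets $B_i$ and the Lebesgue-preserving map $\varphi$, identify the maximal spectral type of $\bim{A_i}{\Ltwo(M)}{A_i}$ as $\int [\sum_{n \geq 0} \mu_i^{*n} * \delta_s]\, \rmd\lambda(s)$ via Proposition \ref{prop:disintegration-spectral-measure-FBCP}, and conclude by uniqueness of the disintegration over the first coordinate. The only cosmetic differences are that you make the bookkeeping of the diagonal summand $\Ltwo(A_{\pi_i})$ (the $n=0$ term) explicit and invoke the completeness of the spectral invariant, which is not actually needed since only the forward direction (isomorphic bimodules have equal spectral invariants under $\varphi \times \varphi$) is used.
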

\begin{proof}
  Write $M = M_{\pi_1} \cong M_{\pi_2}$ and $A_i$, for $i \in \{1,2\}$.  Denote by $[\nu_i] = \int [\sum_{n \geq 0} \mu_i^{* n} * \delta_s] \, \rmd \lambda(s)$ the maximal spectral type of $\bim{A_i}{\Ltwo(M)}{A_i}$ according to Proposition \ref{prop:disintegration-spectral-measure-FBCP}.  By Corollary \ref{cor:isomorphic-bimodules}, there are projections $p_1 \in A_1$, $p_2 \in A_2$ and an isomorphism $\phi:A_1p_1 \ra A_2p_2$ such that the bimodules $\bim{A_1p_1}{(p_1\Ltwo(M)p_1)}{A_1p_1}$ and $\bim{\phi(A_1p_1)}{(p_2\Ltwo(M)p_2)}{\phi(A_1p_1)}$ are isomorphic.  The projections $p_i$ are indicator functions of Lebesgue non-negligible Borel sets $B_i \subset \rS^1$ and the isomorphism $\phi$ equals $\varphi_*$ for some Borel isomorphism $\varphi: B_1 \ra B_2$ preserving the normalised Lebesgue measures.  Since the bimodules $\bim{A_1p_1}{(p_1\Ltwo(M)p_1)}{A_1p_1}$ and $\bim{A_2p_2}{(p_2\Ltwo(M)p_2)}{A_2p_2}$ are isomorphic via $\phi$, their maximal spectral types are isomorphic via $\varphi \times \varphi$.  Using their integral decomposition with respect to the projection on the first component of $\rS^1 \times \rS^1$ as it is calculated in Proposition \ref{prop:disintegration-spectral-measure-FBCP}, we obtain
\begin{align*}
  \left ( \int_{B_2} [\sum_{n \geq 0} \mu_2^{* n} * \delta_s]|_{B_2} \, \rmd \lambda(s) \right )
  & =
  (\varphi \times \varphi)_* \left ( \int_{B_1} [\sum_{n \geq 0} \mu_1^{* n} * \delta_s]|_{B_1} \, \rmd \lambda(s) \right ) \\
  & =
  (\varphi \times \id)_* \left ( \int_{B_2} [\sum_{n \geq 0} \mu_1^{* n} * \delta_{\varphi(s)}]|_{B_1} \, \rmd \lambda(s) \right ) \\
  & =
  \left ( \int_{B_2} \varphi_*([\sum_{n \geq 0} \mu_1^{* n} * \delta_{\varphi(s)}]|_{B_1}) \, \rmd \lambda(s) \right )
\end{align*}
As a result, for almost every $s \in B_2$, we obtain the equality
\[\varphi_* \left ( [\sum_{n \geq 0} \mu_1^{* n} * \delta_{\varphi(s)}]|_{B_1} \right ) = [\sum_{n \geq 0} \mu_2^{* n} * \delta_s]|_{B_2} \eqstop \]
\end{proof}

The next theorem follows by applying the previous one to some special cases.

\begin{theorem}
\label{thm:rigidity-result}
  No free Bogoljubov crossed product associated with a representation in the following classes is isomorphic to a free Bogoljubov crossed product associated with a representation in the other classes.
  \begin{enumerate}
  \item The class of representations $\lambda \oplus \pi_{\mathrm{ap}}$, where $\lambda$ is a multiple of the left regular representation of $\ZZ$ and $\pi_{\mathrm{ap}}$ is a faithful almost periodic representation of dimension at least $2$.
  \item The class of representations $\lambda \oplus \pi_{\mathrm{ap}}$, where $\lambda$ is a multiple of the left regular representation of $\ZZ$ and $\pi_{\mathrm{ap}}$ is a non-faithful almost periodic representation of dimension at least $2$.
  \item The class of representations $\rho \oplus \pi_{\mathrm{ap}}$, where $\rho$ is a representations of $\ZZ$ by multiplication with $\id_{\rS^1}$ on $\Ltwo_\RR(\rS^1, \mu)$, $\mu$ is a probability measure on $\rS^1$ such that $\mu^{*n}$ is singular for all $n$ and $\pi_{\mathrm{ap}}$ is a faithful almost periodic representation of dimension at least $2$.
  \item The class of representations $\rho \oplus \pi_{\mathrm{ap}}$, where $\rho$ is a representations of $\ZZ$ by multiplication with $\id_{\rS^1}$ on $\Ltwo_\RR(\rS^1, \mu)$, $\mu$ is a probability measure on $\rS^1$ such that $\mu^{*n}$ is singular for all $n$ and $\pi_{\mathrm{ap}}$ is a non-faithful almost periodic representation of dimension at least $2$.
  \item Faithful almost periodic representations of dimension at least $2$.
  \item Non-faithful, almost periodic representations of dimension at least $2$.
  \item \label{it:rigidiy:solid} The class of representations $\rho \oplus \pi$, where $\rho$ is mixing and $\dim \pi \leq 1$.
  \end{enumerate}
\end{theorem}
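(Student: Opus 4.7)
The plan is to isolate class (vii) from the remaining six classes using solidity, and then to distinguish classes (i)--(vi) pairwise by extracting a measure-theoretic invariant from Corollary~\ref{cor:measure-class-equal}. First, a representation in class (vii) has the form $\rho \oplus \pi$ with $\rho$ mixing and $\dim \pi \leq 1$, so Theorem~\ref{thm:free-bogoliubov-strong-solidity} shows that its free Bogoljubov crossed product is strongly solid. By contrast, any representation in classes (i)--(vi) contains an almost periodic subrepresentation of dimension at least two; since almost periodic representations have discrete spectrum, this subrepresentation furnishes a rigid subspace of dimension at least two, and Theorem~\ref{thm:non-solidity} forces non-solidity. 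This separates class (vii) from the other classes.

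For the comparisons within classes (i)--(vi), let $\pi_1,\pi_2$ be representations from these classes with spectral measures $\mu_1,\mu_2$ on $\rS^1$, and suppose $M_{\pi_1}\cong M_{\pi_2}$. Both $\mu_i$ have at least two atoms counted with multiplicity (from the almost periodic part), so Corollary~\ref{cor:measure-class-equal} yields Borel sets $B_1,B_2 \subset \rS^1$ of positive Lebesgue measure and a Lebesgue-measure-preserving Borel isomorphism $\varphi:B_1 \to B_2$ satisfying
\[
  \varphi_*\left([\sum_{n \geq 0} \mu_1^{*n} * \delta_{\varphi(s)}]|_{B_1}\right) = [\sum_{n \geq 0} \mu_2^{*n} * \delta_s]|_{B_2}
\]
for almost every $s \in B_2$. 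Setting $\nu_s^{(i)} := \sum_{n \geq 0} \mu_i^{*n} * \delta_s$, the map $\varphi_*$ preserves the Lebesgue measure class on $\rS^1$, so it carries the absolutely continuous, singular continuous and purely atomic parts of $\nu_{\varphi(s)}^{(1)}|_{B_1}$ onto the corresponding parts of $\nu_s^{(2)}|_{B_2}$.

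Decompose $\mu_i=\mu_{i,\mathrm{cont}}+\mu_{i,\mathrm{ap}}$ into non-atomic and atomic parts, and expand $\mu_i^{*n}$ binomially. Using that Lebesgue measure on $\rS^1$ is invariant under convolution with any probability measure, one verifies the following. In classes (i) and (ii), $\mu_{i,\mathrm{cont}}$ is a multiple of Lebesgue, so each $\mu_i^{*n}$ with $n\ge 1$ contains a positive multiple of Lebesgue, and hence $\nu_s^{(i)}$ has a non-trivial absolutely continuous part. In classes (iii) and (iv), $\mu_{i,\mathrm{cont}}$ is Lebesgue-singular continuous with all convolutions Lebesgue-singular; since convolving a Lebesgue-singular measure with an atomic probability measure yields a Lebesgue-singular measure and countable sums of Lebesgue-singular measures remain Lebesgue-singular, $\nu_s^{(i)}$ is Lebesgue-singular with non-trivial continuous part. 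In classes (v) and (vi), $\mu_i$ is purely atomic, hence so is $\nu_s^{(i)}$. Thus the three pairs $\{(\mathrm{i}),(\mathrm{ii})\}$, $\{(\mathrm{iii}),(\mathrm{iv})\}$ and $\{(\mathrm{v}),(\mathrm{vi})\}$ are pairwise separated by the action of $\varphi_*$ on the Lebesgue--Radon--Nikodym decomposition.

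Within each of the three pairs, faithful and non-faithful almost periodic parts are distinguished by counting the atoms of $\nu_s^{(i)}$. In every class, the atomic part of $\mu_i^{*n}$ equals $\mu_{i,\mathrm{ap}}^{*n}$ (all other summands in the binomial expansion are non-atomic), so the atomic part of $\nu_s^{(i)}$ is supported on $G_i \cdot s$, where $G_i \leq \rS^1$ is the subgroup generated by the support of $\mu_{i,\mathrm{ap}}$. Faithfulness of the almost periodic part of $\pi_i$ is equivalent to $|G_i|=\infty$, and hence to the atomic support of $\nu_s^{(i)}$ being countably infinite rather than finite. Since $\varphi_*$ preserves the cardinality of the atomic support almost surely, the faithful and non-faithful representations in each pair produce non-isomorphic free Bogoljubov crossed products. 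The main obstacle is the spectral bookkeeping in the previous paragraph: verifying that convolutions of Lebesgue-singular measures with atomic measures remain Lebesgue-singular, that countable sums respect Lebesgue-singularity, and that the absolutely continuous part of $\nu_s^{(i)}$ is non-trivial exactly when $\mu_i$ already contains a Lebesgue component. These reduce to standard properties of Lebesgue measure on $\rS^1$.
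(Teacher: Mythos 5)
Your proposal is correct and follows essentially the same strategy as the paper: class (vii) is split off by the (strong) solidity dichotomy, and classes (i)--(vi) are separated pairwise by reading off measure-class invariants of the fibre measures $\sum_{n\geq 0}\mu_i^{*n}*\delta_s$ supplied by Corollary \ref{cor:measure-class-equal} --- the paper uses ergodicity of the atomic equivalence relation where you count atoms, but this is the same underlying invariant. The only point needing an extra line is your final step: the corollary only controls the measures \emph{restricted} to $B_1$ and $B_2$, so you must check that for an infinite (hence dense) group $G_1$ the set $G_1\varphi(s)\cap B_1$ is still infinite for almost every $s$; this follows since $t\mapsto\lvert G_1t\cap B_1\rvert$ is $G_1$-invariant, hence a.e.\ constant by ergodicity of the dense rotation action, and has infinite integral because $\lambda(B_1)>0$.
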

Note that by \cite{houdayershlyakhtenko11}, there are measures as mentioned item (iii) and (iv).

\begin{proof}
By Theorem \ref{thm:strong-solidity}, all free Bogoljubov crossed products associated with representations in \ref{it:rigidiy:solid} are strongly solid, but for all other free Bogoljubov crossed products $A \subset M$ is an amenable diffuse von Neumann subalgebra with a non-amenable normaliser. 

It remains to consider representations in (i) to (vi).  They satisfy the requirements of Corollaries \ref{cor:isomorphic-bimodules} and \ref{cor:measure-class-equal}.

We first claim that representations from (i) to (vi) with a faithful and non-faithful almost periodic part, respectively, cannot give rise to isomorphic free Bogoljubov crossed products.  Let $\pi$ be an orthogonal representation of $\ZZ$ and let $B \subset \rS^1$ be Lebesgue non-negligible.  The subgroup generated by the eigenvalues of the complexification of $\pi$ is dense if and only if the almost periodic part of $\pi$ is faithful.  So by Section \ref{sec:measure-from-bimodule}, the atoms of the spectral invariant of $\bim{pA_\pi}{p\Ltwo(M)p}{pA_\pi}$ are an ergodic equivalence relation on $B \times B$ if and only if $\pi$ has a faithful almost periodic part.  So Corollary \ref{cor:isomorphic-bimodules} proves our claim.

Let us now consider the weakly mixing part of the representations in the theorem. It is known that the spectral measure of the left regular representation of $\ZZ$ on $\ltwo_\RR(\ZZ)$ is the Lebesgue measure.  So from Corollary \ref{cor:measure-class-equal}, it follows that the representations whose weakly mixing part is the left regular representation, cannot give a free Bogoljubov crossed product isomorphic to a free Bogoljubov crossed product associated with any of the other representations in the theorem.  Finally, note that for any non-zero projection $p \in A_\pi$ the bimodules $\bim{pA_\pi}{\Ltwo(pM_\pi p)}{p A_\pi}$ is a direct sum of finite index bimodules if and only if the representation $\pi$ has no weakly mixing part.  So appealing to Corollary \ref{cor:isomorphic-bimodules}, we finish the proof.
\end{proof}


\bibliographystyle{mybibtexstyle}
\bibliography{operatoralgebras}

{\small \parbox[t]{150pt}{
    Sven Raum \\
    Department of Mathematics\\
    K.U.Leuven, Celestijnenlaan 200B \\
    B--3001 Leuven \\
    Belgium \\
    {\footnotesize sven.raum@wis.kuleuven.be}}}

\end{document}